\definecolor{darkgreen}{RGB}{0,150,0}
\definecolor{C0}{RGB}{31,119,180}
\definecolor{C1}{RGB}{255, 127, 14}
\newtheorem{experiment}{Experiment}
\let\oldexperiment\experiment
\renewcommand{\experiment}{\oldexperiment\normalfont}
\def\disp{\displaystyle}
\def\Limsup{\mathop{{\rm Lim}\,{\rm sup}}}
\def\tto{\;{\lower 1pt \hbox{$\rightarrow$}}\kern -10pt
\hbox{\raise 2pt \hbox{$\rightarrow$}}\;}
\def\Hat{\widehat}
\def\hat{\widehat}
\def\Bar{\overline}
\def\ve{\varepsilon}
\def\R{\mathbb{R}}
\def\N{\mathbb{N}}
\def\ox{\bar{x}}
\def\oy{\bar{y}}
\def\ov{\bar{v}}
\def\co{\mbox{\rm co}\,}
\def\gph{\operatorname{gph}}
\def\epi{\mbox{\rm epi}\,}
\def\dom{\mbox{\rm dom}\,}
\def\clm{\mbox{\rm clm}\,}
\def\ph{\varphi}
\def\emp{\emptyset}
\def\oR{\Bar{\R}}
\def\N{\mathbb{N}}
\def\lm{\lambda}
\def\tr{T}
\newcounter{count}
\newcommand{\Rex}{\overline{\mathbb{R}}}
\let\epsilon\varepsilon
\DeclareMathAlphabet{\mathpzc}{OT1}{pzc}{m}{it}
\def\X{{\R^n}}
\def\w{{v}}
\def\Y{\R^{m}}
\pgfplotsset{compat = newest}
\newcommand{\MoreauYosida}[2]{{\mathtt{e}}_{#2} #1}
\newcommand{\Prox}[2]{{\mathtt{Prox}}_{#2 #1}}
\newcommand{\Asp}[2]{{{\mathtt{A}}_{#1 }#2}}
\begin{document}

\titlerunning{Coderivative-Based Semi-Newton Method in Nonsmooth Difference Programming}
\title{Coderivative-Based Semi-Newton Method\\ in Nonsmooth Difference Programming
\thanks{Research of the first author was partially supported by the Ministry of Science, Innovation and Universities of Spain and the European Regional Development Fund (ERDF) of the European Commission, Grant PGC2018-097960-B-C22, and by the Generalitat Valenciana, grant AICO/2021/165. Research of the second author was partially supported by the USA National Science Foundation under grants DMS-1808978 and DMS-2204519, by the Australian Research Council under Discovery Project DP-190100555, and by the Project 111 of China under grant D21024. Research of the third author was partially supported by grants: Fondecyt Regular 1190110 and Fondecyt Regular 1200283.}}
\subtitle{}
\author{Francisco J. Arag\'{o}n-Artacho \and \mbox{Boris S. Mordukhovich} \and \mbox{Pedro P\'erez-Aros}}

\institute{Francisco J. Arag\'{o}n-Artacho  \at Department of Mathematics, University of Alicante, Alicante, Spain\\ \email{francisco.aragon@ua.es} \and
Boris S. Mordukhovich \at Department of Mathematics, Wayne State University, Detroit, Michigan 48202, USA\\ \email{boris@math.wayne.edu}
\and Pedro P\'erez-Aros \at Instituto de Ciencias de la Ingenier\'ia, Universidad de O'Higgins, Rancagua, Chile\\
\email{pedro.perez@uoh.com}} 

\date{\today}

\maketitle

\begin{abstract}
This paper addresses the study of a new class of nonsmooth optimization problems, where the objective is represented as a difference of two generally nonconvex functions. We propose and develop a novel Newton-type algorithm to solving such problems, which is based on the coderivative generated second-order 
subdifferential (generalized Hessian) and employs advanced tools of variational analysis. Well-posedness properties of the proposed algorithm are derived under fairly general requirements, while constructive convergence rates are established by using additional assumptions including the Kurdyka--{\L}ojasiewicz condition. We provide applications of the main algorithm to solving a general class of nonsmooth nonconvex problems of structured optimization that encompasses, in particular, optimization problems with explicit constraints. Finally, applications and numerical experiments are given for solving practical problems that arise in biochemical models, constrained quadratic programming, etc., where advantages of our algorithms are demonstrated in comparison with some known techniques and results.
\end{abstract}\vspace*{-0.05in}

\keywords{Nonsmooth difference programming \and generalized Newton methods \and global convergence \and convergence rates \and variational analysis \and generalized differentiation }\vspace*{-0.05in}

\subclass{49J53, 90C15, 9J52}\vspace*{-0.2in}

\section{Introduction}\label{intro}\vspace*{-0.05in}

The primary mathematical model considered in this paper is described by
\begin{equation}\label{EQ01}
\min_{x\in \R^n} \varphi(x):=g(x)-h(x),
\end{equation}
where $g:\mathbb{R}^n \to \R$ is of {\em class $\mathcal{C}^{1,1}$} (i.e., the collection of $\mathcal{C}^1$-smooth functions with locally Lipschitzian derivatives), and where $h: \mathbb{R}^n \to \R$ is a locally Lipschitzian and {\em prox-regular} function; see below. Although \eqref{EQ01} is a problem of unconstrained optimization, it will be shown below that a large class of constrained optimization problems can be reduced to this form. In what follows, we label the optimization class in \eqref{EQ01} as problems of {\em difference programming}.  

The difference form \eqref{EQ01} reminds us of problems of {\em DC $($difference of convex$)$ programming}, which have been intensively studied in optimization with a variety of practical applications; see, e.g., \cite{Aragon2020,Artacho2019,AragonArtacho2018,Oliveira_2020,hiriart,Toh,Tao1997,Tao1998,Tao1986} and the references therein. However, we are not familiar with a systematic study of the class of difference programming problems considered in this paper.

Our main goal here is to develop an efficient numerical algorithm to solve the class of difference programs \eqref{EQ01} with subsequent applications to nonsmooth and nonconvex problems of particular structures, problems with geometric constraints, etc. Furthermore, the efficiency of the proposed algorithm and its modifications is demonstrated by solving some practical models for which we conduct numerical experiments and compare the obtained results with previously known developments and computations by using other algorithms.   
The proposed algorithm is of a {\em regularized damped Newton type} with a {\em novel choice of directions} in the iterative scheme providing a {\em global convergence} of iterates to a stationary point of the cost function. At the {\em first order}, the novelty of our algorithm, in comparison with, e.g., the most popular {\em DCA algorithm} by Tao et al. \cite{Tao1997,Tao1998,Tao1986} and its {\em boosted} developments by Arag\'on-Artacho et al.\cite{Aragon2020,Artacho2019,AragonArtacho2018,MR4078808}
in DC programming, is that instead of a convex subgradient of $h$ in \eqref{EQ01}, we now use a {\em limiting subgradient} of $-h$. No second-order information on $h$ is used in what follows. Concerning the other function $g$ in \eqref{EQ01}, which is nonsmooth of the {\em second-order}, our algorithm replaces the classical Hessian matrix by the {\em generalized Hessian/second-order subdifferential} of $g$ in the sense of Mordukhovich \cite{m92}. The latter construction, which is defined as the coderivative of the limiting subdifferential has been well recognized in variational analysis and optimization due its comprehensive calculus and explicit evaluations for broad classes of extended-real-valued functions arising in applications. We refer the reader to, e.g., \cite{chhm,dsy,Helmut,hmn,hos,hr,2020arXiv200910551D,MR3823783,MR2191744,mr,os,yy} and the bibliographies therein for more details. Note also that the aforementioned generalized Hessian has already been used in differently designed algorithms of the Newton type to solve
optimization-related problems of different nonsmooth structures in comparison with \eqref{EQ01}; see \cite{Helmut,2020arXiv200910551D,jogo,2021arXiv210902093D,BorisEbrahim}. Having in mind the discussions above, we label the main algorithm developed in this paper as the {\em regularized coderivative-based damped semi-Newton method} (abbr.\ RCSN).   

The rest of the paper is organized as follows. Section~\ref{sec:2} recalls constructions and statements from variational analysis and generalized differentiation, which are broadly used in the formulations and proofs of the major results. Besides well-known facts, we present here some new notions and further elaborations. 

In Section~\ref{sec:3}, we design our {\em main RCSN algorithm}, discuss each of its steps, and establish various results on its performance depending on imposed assumptions whose role and importance are illustrated by examples. Furthermore, Section~\ref{sec:4} employs the {\em Kurdyka-{\L}ojasiewicz {\rm(KL)} property} of the cost function to establish quantitative convergence rates of the RCSN algorithm depending on the exponent in the KL inequality. 

Section~\ref{sec:5} addresses the class of (nonconvex) problems of {\em structured optimization} with the cost functions given in the form $f(x)+\psi(x)$, where $f\colon\R^n\to\R$ is a twice continuously differentiable function with a Lipschitzian Hessian (i.e., of class ${\cal C}^{2,1}$), while $\psi\colon\R^n\to\oR:=(-\infty,\infty]$ is an extended-real-valued prox-bounded function. By using the {\em forward-backward envelope} \cite{MR3845278} and the associated {\em Asplund function} \cite{asplund}, we reduce this class of structured optimization problems to the difference form \eqref{EQ01} and then employ the machinery of RCSN to solving problems of this type. As a particular case of RCSN, we design and justify here a new {\em projected-like Newton algorithm} to solve optimization problems with geometric constraints given by general closed sets.

Section~6 is devoted to implementations of the designed algorithms and {\em numerical experiments} in two different problems arising in practical modeling. Although these problems can be treated after some transformations by DCA-like algorithms, we demonstrate in this section numerical advantages of the newly designed algorithms over the known developments in both smooth and nonsmooth settings. The concluding Section~\ref{sec:7} summarizes the major achievements of the paper and discusses some directions of our future research.\vspace*{-0.4in}

\section{Tools of Variational Analysis and Generalized Differentiation}\label{sec:2}\vspace{-0.1in}

Throughout the entire paper, we deal with finite-dimensional Euclidean spaces and use the standard notation and terminology of variational analysis and generalized differentiation; see, e.g., \cite{MR3823783,MR1491362}, where the reader can find the majority of the results presented in this section. Recall that 
$\mathbb{B}_r(x)$ stands for the closed ball centered at $x\in\mathbb{R}^n$ with radius $r> 0$ and that $\N:=\{1,2,\ldots\}$.

Given a set-valued mapping  $F: \R^n \tto \R^m$, its {\em graph}  is the set $	\gph F:=\big\{ (v,w) \in \X\times \Y\;|\; w\in F(x)\big\}$, while the (Painlev\'e--Kuratowski) {\em outer limit} of $F$ at $x\in\R^n$ is defined by
\begin{equation}\label{pk}
\Limsup_{u\to x}F(u):=\big\{y\in\R^m\;\big|\;\exists\,u_k\to x,\,y_k\to y,\;y_k\in F(u_k)\;\mbox{as}\;k\in\N\big\}.
\end{equation}

 For a nonempty set $C\subseteq\mathbb{R}^n$, the (Fr\'echet) {\em regular normal cone}  and (Mordukhovich) {\em basic/limiting  normal cone} at $x\in C$ are defined, respectively, by
\begin{equation}\label{rnc}
\begin{aligned}
\Hat{N}(x;C)=\Hat N_C(x):&=\Big\{ x^*\in \mathbb{R}^n\;\Big|\;\limsup\limits_{ u \overset{C}{\to} x }  \Big\langle x^\ast,\frac{u - x}{\| u - x  \| }\Big\rangle\leq 0\Big\},\\
N(x;C)=N_C(x):&=\Limsup\limits_{u \overset{C}{\to} x}\Hat{N}(u;C),
\end{aligned}
\end{equation}
where  ``$u\overset{C}{\to} x$'' means that $u \to x$ with $u \in C$. We use the convention $\Hat N(x;C)=N(x;C):=\emp$ if $x\notin C$. The {\em indicator function} $\delta_C(x)$ of $C$ is equal to 0 if $x\in C$ and to $\infty $ otherwise.

For a lower semicontinuous (l.s.c.) function  $f:\X\to\Rex$, its {\em domain}  and {\em epigraph} are given by $\dom f := \{ x\in \R^n \mid f(x) < \infty \}$ and $\epi f:=\{ (x,\alpha) \in \X\times \R \;|\;f(x)\leq \alpha\},$  respectively. The {\em regular} and {\em basic subdifferentials} of $f$ at $x\in\dom f$ are defined by
\begin{equation}\label{sub}
\begin{aligned}
\Hat{\partial} f(x)&:=\big\{ x^\ast\in \mathbb{R}^n\mid(x^\ast ,-1) \in \Hat{N}\big((x,f(x));\epi f\big)\big\},\\
\partial f(x) &:= \big\{ x^\ast\in \mathbb{R}^n\mid(x^\ast ,-1) \in N\big((x,f(x));\epi f\big)\big\},
\end{aligned}	
\end{equation}
via the corresponding normal cones \eqref{rnc} to the epigraph. The function $f$ is said to be {\em lower/subdifferentially regular} at $\bar{x}\in\dom f$ if $\partial  f(\bar{x}) =\Hat{\partial}f(\bar{x})$.
 
Given further a set-valued mapping/multifunction $F: \X \tto \Y$,
the {\em regular} and {\em basic coderivatives} of $F$ at $(x,y)\in\gph F$ are defined  for all $y^*\in\Y$ via the corresponding normal cones \eqref{rnc}  to the graph of $F$, i.e.,
\begin{equation}\label{cod}
\begin{aligned}
\Hat{D}^\ast F(x,y) (y^\ast)&:=\big\{  x^\ast \in \X\;\big|\;(x^\ast,-y^\ast) \in  \Hat{N}\big( (x,y); \gph F\big)\big\},\\
{D}^\ast F(x,y) (y^\ast)&:=\big\{x^\ast \in \X\;\big|\;(x^\ast,-y^\ast) \in  N\big( (x,y); \gph F\big)\big\},
\end{aligned}
\end{equation}
where $y$ is omitted if $F$ is single-valued at $x$. When $F$ is single-valued and locally Lipschitzian around $x$, the basic coderivative has the following representation via the basic subdifferential of the scalarization
\begin{align}\label{coder:sub}
{D}^\ast F(x) (y^\ast) = \partial \langle y^\ast, F\rangle (x), \text{ where }   \langle y^\ast, F\rangle (x):= \langle y^\ast, F(x)\rangle.
\end{align}

Recall that a set-valued mapping $F: \mathbb{R}^n \tto \mathbb{R}^m $ is {\em strongly metrically subregular} at $(\bar{x},\bar{y})\in \gph F$ if there exist $\kappa,\epsilon>0$ such that
\begin{align}\label{def:stron_subreg}
\| x -\bar{x}\| \leq \kappa \| y -  \bar{y}\|\; \text{ for all }\;(x,y) \in \mathbb{B}_\epsilon(\bar{x},\bar{y}) \cap\gph F.
\end{align}
It is well-known that this property of $F$ is equivalent to the {\em calmness} property of the inverse mapping $F^{-1}$ at $(\oy,\ox)$. In what follows, we use the calmness property of single-valued mappings $h\colon\R^n\to\R^m$ at $\ox$ meaning that there exist positive numbers $\kappa$ and $\ve>0$ such that
\begin{equation}\label{calm}
\|h(x)-h(\bar{x})\|\leq\kappa\|x-\bar{x}\|\;\text{ for all }\;x\in\mathbb{B}_\epsilon(\bar{x}).
\end{equation}
The infimum of all $\kappa>0$ in \eqref{calm}  is called the \emph{exact calmness bound} of $h$ at $\bar{x}$ and is denoted it by $\clm h(\bar{x})$. On the other hand, a multifunction $F\colon\R^n\tto\R^m$ is {\em strongly metrically regular} around $(\bar{x},\bar{y}) \in  \gph F$ if its inverse $F^{-1}$  admits a single-valued and Lipschitz continuous localization around this point.

Along with the (first-order) basic subdifferential in \eqref{sub}, we consider the {\em second-order subdifferential/generalized Hessian} of $f: \X \to \Rex$ at $x\in \dom f$ relative to $x^\ast\in\partial f(x)$ defined by
\begin{equation}\label{2nd}
\partial^2f(x,x^\ast) (v^\ast) = \left(D^\ast\partial f\right) (x,x^\ast)(v^\ast),\quad v^\ast\in \X
\end{equation}
and denoted by $\partial^2f(x) (v^\ast)$ when $\partial f(x)$ is a singleton. If $f$ is twice continuously differentiable ($\mathcal{C}^2$-smooth) around $x$, then $\partial^2 f(x)(v^\ast)=\{\nabla^2 f(x)v^\ast\}$.

Next we introduce an extension of the notion of positive-definiteness for multifunctions, where the the corresponding constant may not be positive.\vspace*{-0.1in}

\begin{definition}\label{def:lower-def}
Let $F: \mathbb{R}^n \tto \mathbb{R}^n$ and $\xi\in\mathbb{R}$. Then $F$ is \emph{$\xi$-lower-definite} if
\begin{align}\label{Stron-semide}
\langle y, x\rangle \geq \xi\|x\|^2\;\text{ for all }\;(x,y)\in\gph F. 
\end{align}
\end{definition}

\begin{remark}\label{rem:definite}
We can easily check the following:

(i) For any symmetric matrix $Q$ with the smallest eigenvalue $\lambda_{\min}(Q)$, the function $f(x)=Qx$ is $\lambda_{\min}(Q)$-lower-definite.

(ii) If a function $f: \R^n \to  \Rex$ is {\em strongly convex} with modulus $\rho>0$, (i.e., $ f -\frac{\rho }{2}\| \cdot \|^2$ is convex), it follows from \cite[Corollary~5.9]{MR3823783} that $\partial^2 f(x,x^\ast)$ is $\rho$-lower-definite for all $(x,x^\ast)\in\gph\partial f$.

(iii) If $F_1,F_2: \mathbb{R}^n \tto \mathbb{R}^n$ are $\xi_1$ and $\xi_2$-lower-definite, then the sum $F_1+F_2$ is $(\xi_1+\xi_2)$-lower-definite.
\end{remark}\vspace*{-0.05in}

Recall next that a function $f: \R^n \to \Rex$ is \emph{prox-regular} at $\bar{x}\in\R^n$ \emph{for} $\bar{v} \in \partial f(\bar{x})$ if it is l.s.c.\  around $\bar{x}$ and there exist $\epsilon>0$ and $r\geq 0$ such that
\begin{align}\label{proregularity}
f(x') \geq f(x) + \langle v,  x'-x\rangle  - \frac{r}{2}\| x'- x\|^2
\end{align}
whenever $x,x'\in\mathbb{B}_\epsilon(\bar{x})$ with $ f(x) \leq  f(\bar{x}) + \epsilon$ and $v\in \partial f(x) \cap \mathbb{B}_\epsilon(\bar{v})$. If this holds for all $\bar{v} \in \partial f(\bar{x})$, $f$ is said to be \emph{prox-regular at} $\bar x$.\vspace*{-0.05in}

\begin{remark}
The class of prox-regular functions has been well-recognized in modern variational analysis. It is worth mentioning that if $f$ is a locally Lipschitzian function around $\bar{x}$, then the following properties of $f$ are equivalent:  (i)  prox-regularity at $\bar{x}$, (ii) lower-$\mathcal{C}^2$ at $\bar{x}$, and (iii) primal-lower-nice at $\bar{x}$; see, e.g., \cite[Corollary 3.12]{MR2101873} for more details.
\end{remark}\vspace*{-0.05in}

Given a function $f: \mathbb{R}^n \to \Rex$ and $\bar{x}\in\dom f$, the {\em upper directional derivative} of $f$ at $\bar{x}$ with respect to $d\in\R^n$  is defined by
\begin{equation}\label{UDD}
f'(\bar{x};d):=\limsup\limits_{t\to 0^+} \frac{f(\bar{x}+td) - f(\bar{x})}{t}.
\end{equation}
The following proposition establishes various properties of prox-regular functions used below. We denote the convex hull of a set by ``co".

\begin{proposition}\label{Lemma:Dire01}
Let $f: \mathbb{R}^n \to \Rex$ be locally Lipschitzian around $\ox$ and prox-regular at this point. Then $f$ is lower regular at $\bar{x}$, $\co\partial (-f)(\bar{x}) = -\partial f(\bar{x})$, and for any $d\in\R^n$ we have the representations
\begin{align} \label{Dire01}
(-f)'(\bar x;d)=\inf\big\{\langle w, d\rangle\;\big|\;w\in \partial (-f)(\bar x)\big\}= \inf\big\{\langle  w, d\rangle\;\big|\;w\in-\partial f(\bar{x})\big\}.	
\end{align}
\end{proposition}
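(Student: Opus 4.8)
The plan is to prove the three assertions in sequence, with subdifferential regularity serving as the foundation for the other two, exploiting throughout the link between the limiting subdifferential and Clarke's generalized gradient for locally Lipschitzian functions. First I would establish that $f$ is lower regular at $\ox$, i.e., $\partial f(\ox)=\hat\partial f(\ox)$. Invoking the equivalence recorded just above the proposition, local Lipschitz continuity together with prox-regularity at $\ox$ makes $f$ lower-$\mathcal{C}^2$ there, and lower-$\mathcal{C}^2$ functions are known to be subdifferentially (Clarke) regular, which is precisely the claimed equality. A self-contained alternative is to argue directly from \eqref{proregularity}: the quadratic lower bound shows that $f+\frac{r}{2}\|\cdot\|^2$ obeys a local subgradient inequality, forcing every limiting subgradient of $f$ to already be a regular subgradient and yielding regularity without passing through lower-$\mathcal{C}^2$.

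Next, for the identity $\co\partial(-f)(\ox)=-\partial f(\ox)$, I would use two standard facts for locally Lipschitzian functions: the convexified limiting subdifferential coincides with Clarke's generalized gradient, and the latter is odd under negation. Together they give $\co\partial(-f)(\ox)=-\co\partial f(\ox)$. Since lower regularity makes $\partial f(\ox)=\hat\partial f(\ox)$ convex, we have $\co\partial f(\ox)=\partial f(\ox)$, and the asserted equality follows at once.

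Finally, for the directional derivative formulas in \eqref{Dire01}, I would use that a locally Lipschitzian lower regular function has its one-sided directional derivative equal to the support function of its subdifferential, so that $f'(\ox;d)=\max\{\langle v,d\rangle\mid v\in\partial f(\ox)\}$ and, in particular, the $\limsup$ defining \eqref{UDD} is attained as a genuine limit. Writing $(-f)'(\ox;d)=-\liminf_{t\to 0^+}\frac{f(\ox+td)-f(\ox)}{t}=-f'(\ox;d)$, I obtain $(-f)'(\ox;d)=\min\{\langle w,d\rangle\mid w\in-\partial f(\ox)\}$, which is the last expression in \eqref{Dire01}, the $\min$ agreeing with the $\inf$ since $\partial f(\ox)$ is compact. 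To reach the first expression, I would use that the infimum of a linear functional over a set equals its infimum over the convex hull, hence $\inf\{\langle w,d\rangle\mid w\in\partial(-f)(\ox)\}=\inf\{\langle w,d\rangle\mid w\in\co\partial(-f)(\ox)\}$, and then substitute $\co\partial(-f)(\ox)=-\partial f(\ox)$ from the previous step.

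The main obstacle is the first assertion: converting the prox-regular inequality \eqref{proregularity} into the subdifferential regularity $\partial f(\ox)=\hat\partial f(\ox)$. Everything afterward is routine Clarke calculus combined with the convex-hull invariance of linear infima, but the passage from the quadratic lower bound to equality of the two subdifferentials is where the essential work resides—whether carried out through the lower-$\mathcal{C}^2$ characterization or via a direct limiting argument on subgradients.
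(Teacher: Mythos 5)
Your proposal is correct, and its first two assertions are handled essentially as in the paper: the direct argument you sketch as an ``alternative'' for lower regularity (plugging $x=\ox$, $v=\ov\in\partial f(\ox)$ into \eqref{proregularity} and passing to the limit to conclude $\ov\in\Hat\partial f(\ox)$) is precisely the paper's argument, and the chain $\co\partial(-f)(\ox)=\overline{\partial}(-f)(\ox)=-\overline{\partial}f(\ox)=-\co\partial f(\ox)=-\partial f(\ox)$ is also the paper's. Where you genuinely diverge is in \eqref{Dire01}: you invoke the standard characterization of Clarke-regular Lipschitz functions (the one-sided directional derivative exists as a true limit and equals the support function of $\partial f(\ox)$), then negate and pass to the convex hull. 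The paper instead gives a self-contained two-sided estimate: the upper bound $(-f)'(\ox;d)\le\langle v,d\rangle$ for each $v\in\partial(-f)(\ox)$ comes from applying \eqref{proregularity} at $x=\ox$, $x'=\ox+td$ with $-v\in\partial f(\ox)$, and the lower bound comes from the Lebourg-type mean value theorem along a sequence $t_k\downarrow 0$ realizing the $\limsup$ in \eqref{UDD}, producing a limiting subgradient $\ov\in\partial f(\ox)$ with $(-f)'(\ox;d)=\langle-\ov,d\rangle$. Your route is shorter and cleaner if one is willing to cite the regularity/support-function theorem (e.g., Rockafellar--Wets, Theorem 9.16), whose hypotheses are indeed met since $\partial f(\ox)=\Hat\partial f(\ox)$ for a locally Lipschitz function yields Clarke regularity; the paper's route is more elementary and keeps everything at the level of the prox-regularity inequality itself, which also makes visible exactly where that hypothesis enters. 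Both arguments are sound.
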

\begin{proof}
First we fix an arbitrary subgradient $\bar{v} \in \partial f(\bar{x})$ 
 and deduce from \eqref{proregularity} applied to $x=\bar{x}$ and $v=\bar{v}$ that
\begin{equation*}
f(x') \geq f(\bar{x}) +\langle  \bar{v}, x'-\bar{x}\rangle -\frac{r}{2} \| x' - \bar{x}\|^2\;\text{ for all }\;x'\in\mathbb{B}_\epsilon(\bar{x}).
\end{equation*}
Passing to the limit as $x' \to \bar{x}$ tells us that
\begin{align*}
\liminf_{x'\to \bar{x}} \frac{ f(x') - f(\bar{x}) - \langle  \bar{v}, x'-\bar{x}\rangle  }{   \| x' - \bar{x}\|  } \geq 0,
\end{align*} 
which means that $\ov\in\Hat\partial f(\ox)$ and thus shows that 
$f$ is lower regular at $\bar{x}$. By the Lipschitz continuity of $f$ around $\bar{x}$ and the convexity of the set $\Hat{\partial} f(\bar{x})$, we have that $\Hat{\partial} f(\bar{x})=\partial f(\bar{x}) =\co \partial f(\bar{x}) = \overline{\partial } f(\bar{x})$,  where $\overline{\partial }$ denotes the (Clarke) {\em generalized gradient}. It follows from $ \overline{\partial } (-f)(\bar{x}) =-\overline{\partial} f(\bar{x})$ that $ \overline{\partial } (-f)(\bar{x}) =-\partial f(\bar{x})$, which implies therefore that $\partial (-f)(\bar{x}) \subseteq -\partial f(\bar{x})$. 	

Pick $v\in  \partial (-f)(\bar{x})$, $d\in \mathbb{R}^n$ and find by the prox-regularity of $f$ at $\bar{x}$ for $-v\in\partial f(\bar{x})$ that there exists $r>0$ such that
\begin{align*}
\langle v,d \rangle+\frac{rt}{2}\| d\|^2 \geq \frac{-f(\bar x+td) + f(\bar x)}{t}
\end{align*}
if $t>0$ is small enough. This yields $(-f)'(\bar x;d) \leq 	\langle v,  d \rangle $ for all $\w\in \partial (-f)(\bar x) $ and thus verifies the inequality ``$\leq $''  in  the first representation of \eqref{Dire01}.

To prove the opposite inequality therein, take $t_k \to 0^+$ such that
\begin{align*}
\lim_{k\to \infty} \frac{ -f(\bar{x} + t_k d ) + f(\bar{x}) }{ t_k}= (-f)'(\bar x;d).
\end{align*} 
Employing the mean value theorem from \cite[Corollary~4.12]{MR3823783}) gives us
\begin{align*}
f(\bar{x} + t_k d ) - f(\bar{x}) = t_k \langle v_k,d\rangle \text{ for some } v_k \in \partial f(\bar{x} + \lambda_k t_k d) \text{ with } \lambda_k   \in ( 0,1).
\end{align*}
It follows from the Lipschitz continuity of $f$ that $\{v_k\}$ is bounded,
and so we can assume that $v_k \to \bar{v}\in  \partial f(\bar{x})$. Therefore,
\begin{equation*}
\begin{array}{ll}
(-f)'(\bar x;d)=\langle -\bar{v}, d\rangle \geq \inf\big\{\langle  w, d\rangle\;\big|\; w \in -\partial f(\bar{x})\big\} \\
=\inf\big\{\langle  w, d\rangle\;\big|\;w \in \co \partial (-f)(\bar{x})        \big\}=\inf\big\{\langle  w, d\rangle\;\big|\;w \in  \partial (-f)(\bar{x})      \big\},
\end{array}
\end{equation*}
which verifies \eqref{Dire01} and completes the proof of the proposition.
\end{proof}\vspace*{-0.05in}

Next we define the notion of stationary points for problem \eqref{EQ01} the finding of which is the goal of our algorithms.

\begin{definition}\label{def:stationary}
Let $\varphi=g-h$ be the cost function in \eqref{EQ01}, where $g$ is of class $\mathcal{C}^{1,1}$ around some point $\ox$, and where $h$ is locally Lipschitzian around $\ox$ and prox-regular at this point. Then $\bar{x}$ is a \emph{stationary point} of \eqref{EQ01} if $0\in\partial\varphi(\bar{x})$.
 \end{definition}\vspace*{-0.2in}
 
 \begin{remark} 
 The stationarity notion $0 \in \partial\varphi(\bar{x})$, expressed via 
 the limiting subdiffential, is known as the {\em M$($ordukhovich$)$-stationarity}. Since no other stationary points are considered in this paper, we skip ``M" in what follows. Observe from the subdifferential sum rule in our setting that $\bar{x}$ is a stationary point in \eqref{EQ01} if and only if $0\in\nabla g(\bar{x}) +\partial(-h)(\bar{x})$. Thus every stationary point $\bar{x}$ is a {\em critical point} in the sense that $0\in\nabla g(\bar{x}) - \partial h(\bar{x})$. By Proposition~\ref{Lemma:Dire01}, the latter can be equivalently described in terms of the generalized gradient and also via the {\em symmetric subdifferential} \cite{MR3823783} of $\ph$ at $\ox$ defined by
\begin{equation}\label{sym}
\partial^0\ph(\ox):=\partial\varphi(\bar{x})\cup\big(-\partial(-\varphi)(\bar{x})\big)
\end{equation}
which possesses the plus-minus symmetry $\partial^0(-\ph(\ox))=-\partial^0(\ph(\ox))$. When both $g$ and $h$ are convex, the classical DC algorithm~\cite{Tao1986,Tao1997} and its BDCA variant \cite{MR4078808} can be applied for solving problem~\eqref{EQ01}. Although these algorithms only converge to critical points, they can be easily combined as in \cite{Aragon2020} with a basic derivative-free optimization scheme to converge to {d-stationary points}, which satisfy $\partial h(\bar{x})=\{\nabla g(\bar{x})\}$ (or, equivalently, $\varphi'(\bar{x};d)=0$ for all $d\in\mathbb{R}^n$; see~\cite[Proposition~1]{Aragon2020}). In the DC setting, every local minimizer of problem~\eqref{EQ01} is a d-stationary point \cite[Theorem~3]{Toland1979}, a property which is stronger than the notion of stationarity in Definition~\ref{def:stationary}.
\end{remark}\vspace*{-0.1in}

To proceed, recall that a mapping  $f: U\to \mathbb{R}^m$ defined on an open set $U\subseteq \mathbb{R}^n $ is \emph{semismooth} at $\bar{x}$ if it is locally Lipschitzian around $\bar{x}$, directionally differentiable at this point, and the limit
\begin{align*}
\lim\limits_{A \in \tiny{\co} \overline{\nabla} f(\bar{x} + t u'), \atop u' \to u, t\to 0^+} A u'
\end{align*}
exists for all $u \in \mathbb{R}^n$, where $\overline{\nabla} f(x) :=\{ A\;|\; \exists x_k \overset{D}{\to }  x \text{ and } \nabla f(x_k) \to A \}$, and where $D$ is the set on which $f$ is differentiable; see \cite{MR1955649,MR3289054} for more details. We say that a function $g: \mathbb{R}^n \to \Rex$ is {\em semismoothly differentiable} at $\bar{x}$ if $g$ is $\mathcal{C}^{1}$-smooth around $\bar{x}$ and its gradient mapping $\nabla g$ is semismooth at this point.

Recall further that a function $\psi: \mathbb{R}^n \to \Rex$ is \emph{prox-bounded} if there exists $\lambda>0$ such that $\MoreauYosida{\psi}{\lambda}(x)>-\infty$ for some $x\in \mathbb{R}^n$, where
$\MoreauYosida{\psi}{\lambda} :\mathbb{R}^n \to \Rex$  is the \emph{Moreau envelope} of $\psi$ with parameter $\lambda>0$ defined by
\begin{equation}\label{moreau}
\MoreauYosida{\psi}{\lambda} (x):=\inf_{ z \in \mathbb{R}^n }\Big\{ \psi(z) + \frac{1}{2\lambda} \| x-z\|^2\Big\}.
\end{equation}
The number $\lambda_\psi:= \sup \{\lambda >0\;|\;\MoreauYosida{\psi}{\lambda}(x)>-\infty \text{ for some } x\in \mathbb{R}^n \}$  is called the \emph{threshold} of prox-boundedness of $\psi$. The corresponding \emph{proximal mapping} is the multifunction $\Prox{\psi}{\lambda} : \mathbb{R}^n \tto \mathbb{R}^n$ given by
\begin{equation}\label{prox}
\Prox{\psi}{\lambda} (x):= \mathop{\rm argmin}_{z \in \mathbb{R}^n }\Big\{  \psi(z) + \frac{1}{2\lambda} \| x-z\|^2\Big\}.
\end{equation}

Next we observe that that the Moreau envelope can be represented as a {\em DC function}. For any function $\varphi:\mathbb{R}^n\to\Rex$, consider its \emph{Fenchel conjugate} 
\begin{equation*}
\phi^*(x):=\sup_{z\in\mathbb{R}^n}\big\{\langle x,z\rangle-\phi(z)\big\},
\end{equation*}
and for any $\psi\colon\R^n\to\oR$ and $\lm>0$, define the {\em Asplund function}
\begin{equation}\label{asp}
\Asp{\lambda}{\psi}(x):=\sup\limits_{z\in \mathbb{R}^n}\Big\{ \frac{ 1}{\lambda} \langle z,x\rangle - \psi(z)  - \frac{1}{ 2\lambda } \| z\|^2\Big\}=\Big(\psi  + \frac{1}{ 2\lambda } \| \cdot\|^2\Big)^\ast(x),
\end{equation}
which is inspired by Asplund's study of metric projections in \cite{asplund}. The following proposition presents the precise formulation of the aforementioned statement and reveals some remarkable properties of the Asplund function \eqref{asp}.

\begin{proposition}\label{Lemma5.1}
Let $\psi$ be a prox-bounded function with threshold $\lambda_\psi$. Then for every $\lambda \in (0,\lambda_\psi)$, we have the representation
\begin{equation}\label{more-asp}
\MoreauYosida{\psi}{\lambda}(x)=\frac{1}{2\lambda} \| x\|^2 - \Asp{\lambda}{\psi}(x),\quad x\in\R^n,
\end{equation}
where the Asplund function is convex and Lipschitz continuous on $\R^n$.
Furthermore, for any $x\in\R^n$ the following subdifferential evaluations hold:
\begin{align}{ }
\partial(-\Asp{\lambda}{\psi}) (x) &\subseteq-  \frac{1}{\lambda}  \Prox{\psi}{\lambda} (x),\label{eq_sub_eq01}\\
\partial \Asp{\lambda}{\psi}(x) &= \frac{1}{\lambda} \co\left( \Prox{\psi}{\lambda} (x)\right).\label{eq_sub_eq02}
\end{align}
Moreover, if $v\in \Prox{\psi}{\lambda} (x)$ is such that $v\notin \co\left( \Prox{\psi}{\lambda} (x)\backslash \{v\}\right)$, then the vector  $-\frac{1}{\lambda}v$ belongs to $\partial(-\Asp{\lambda}{\psi}) (x)$.
If in addition $f$ is of class $\mathcal{C}^{2,1}$ on $\R^n$, then the function $x\mapsto \Asp{\lambda}{\psi}( x - \lambda \nabla f(x))$ is prox-regular at any point $x\in\R^n$.
\end{proposition}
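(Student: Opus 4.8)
The plan is to write the function in question as a composition $G:=\phi\circ S$, where $\phi:=\Asp{\lambda}{\psi}$ is the convex and Lipschitz Asplund function furnished by the first part of the proposition, and $S(x):=x-\lambda\nabla f(x)$ is a $\mathcal{C}^{1,1}$ inner mapping, and then to verify the prox-regularity inequality \eqref{proregularity} directly by combining the subgradient inequality for the convex outer function with a second-order Taylor estimate for the smooth inner mapping. Fix $\ox\in\R^n$. Since $\phi$ is convex and Lipschitz on $\R^n$, I would let $\ell>0$ be a Lipschitz constant of $\phi$, so that $\|u\|\le\ell$ for every $u\in\partial\phi(y)$ and every $y$, and recall that convexity gives $\phi(y')\ge\phi(y)+\la u,y'-y\ra$ for all such $u$. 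Since $f$ is of class $\mathcal{C}^{2,1}$, the mapping $S$ is continuously differentiable with Jacobian $\nabla S(x)=I-\lambda\nabla^2 f(x)$, and $\nabla S$ is Lipschitz continuous with some constant $L>0$ on a convex ball $U=\mathbb{B}_\epsilon(\ox)$.

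For $x,x'\in U$, I would invoke the integral form of the mean value theorem to get the remainder bound $\|S(x')-S(x)-\nabla S(x)(x'-x)\|\le\frac{L}{2}\|x'-x\|^2$. Substituting $y=S(x)$ and $y'=S(x')$ into the convex subgradient inequality, using the adjoint identity $\la u,\nabla S(x)(x'-x)\ra=\la\nabla S(x)^*u,x'-x\ra$, and estimating the remainder term through $\|u\|\le\ell$, I would arrive at
\begin{equation*}
G(x')\ge G(x)+\la\nabla S(x)^*u,x'-x\ra-\frac{\ell L}{2}\|x'-x\|^2
\end{equation*}
for every $u\in\partial\phi(S(x))$ and all $x,x'\in U$.

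It then remains to match the linear term with an arbitrary subgradient of $G$. Because $S$ is $\mathcal{C}^1$, hence strictly differentiable, and $\phi$ is Lipschitz, the singular subdifferential of $\phi$ is trivial and the chain-rule qualification condition holds automatically, so the composition rule yields $\partial G(x)=\nabla S(x)^*\partial\phi(S(x))$; in particular, every $v\in\partial G(x)$ has the form $v=\nabla S(x)^*u$ with $u\in\partial\phi(S(x))$. Inserting this identity turns the last display into the prox-regularity inequality \eqref{proregularity} with modulus $r:=\ell L$, valid for all $x,x'\in U$ and all $v\in\partial G(x)$; a fortiori it persists under the localized restrictions $G(x)\le G(\ox)+\epsilon$ and $v\in\mathbb{B}_\epsilon(\ov)$, which establishes prox-regularity of $G$ at $\ox$ for every $\ov\in\partial G(\ox)$. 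I note that the displayed inequality is equivalent to the convexity of $G+\frac{r}{2}\|\cdot\|^2$ on $U$, i.e.\ to $G$ being lower-$\mathcal{C}^2$, which for the locally Lipschitzian $G$ is in turn equivalent to prox-regularity by the Remark following \eqref{proregularity}; either route concludes the proof. The step I expect to require the most care is the chain rule: it is precisely the Lipschitz continuity of the Asplund function that both validates the qualification condition and bounds $\|u\|$ by $\ell$ uniformly, which is what makes the quadratic remainder uniform and lets the linear term be identified with a prescribed subgradient of $G$.
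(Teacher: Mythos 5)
There is a substantive gap: your argument proves only the \emph{last} assertion of the proposition. The statement also claims the representation \eqref{more-asp} of the Moreau envelope, the convexity and Lipschitz continuity of the Asplund function on $\R^n$, the two subdifferential evaluations \eqref{eq_sub_eq01} and \eqref{eq_sub_eq02}, and the assertion that $-\frac{1}{\lambda}v\in\partial(-\Asp{\lambda}{\psi})(x)$ whenever $v\in\Prox{\psi}{\lambda}(x)$ is not in $\co\big(\Prox{\psi}{\lambda}(x)\backslash\{v\}\big)$. You treat the convexity and Lipschitz continuity as ``furnished by the first part of the proposition,'' but these are conclusions to be established, not hypotheses: convexity comes from the second equality in \eqref{asp} (the Asplund function is a Fenchel conjugate), Lipschitz continuity from its finiteness on $\R^n$ inherited from the Moreau envelope, and \eqref{more-asp} follows by direct comparison of the defining infimum and supremum. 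The evaluations \eqref{eq_sub_eq01}--\eqref{eq_sub_eq02} and the extreme-point claim require their own arguments (in the paper: the formula for $\partial\MoreauYosida{\psi}{\lambda}$ from \cite[Example~10.32]{MR1491362}, the sum rule, and the identity $\co\partial(-\Asp{\lambda}{\psi})(x)=-\partial\Asp{\lambda}{\psi}(x)$ combined with a convex-hull cancellation argument). None of this appears in your proposal, so as written it does not prove the proposition.

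The part you do prove is correct, and your route differs from the paper's in an instructive way. The paper disposes of the prox-regularity of $x\mapsto\Asp{\lambda}{\psi}(x-\lambda\nabla f(x))$ in one line by citing \cite[Proposition~2.3]{MR2069350} for compositions of convex functions with $\mathcal{C}^{1,1}$ mappings, whereas you verify \eqref{proregularity} directly: the convex subgradient inequality for the outer function plus the quadratic Taylor remainder for $S(x)=x-\lambda\nabla f(x)$, with the Lipschitz constant $\ell$ of $\Asp{\lambda}{\psi}$ controlling the error term, yields the lower estimate with modulus $r=\ell L$ uniformly in $x,x'$ on a ball. The chain rule $\partial(\phi\circ S)(x)\subseteq\nabla S(x)^*\partial\phi(S(x))$, valid here since $\phi$ is Lipschitz (trivial singular subdifferential) and $S$ is strictly differentiable, then identifies the linear term with an arbitrary subgradient; note that only this inclusion is needed, and your claim of equality in the chain rule is an unnecessary over-statement. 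This self-contained computation is a legitimate replacement for the citation, and in fact shows the stronger lower-$\mathcal{C}^2$ property on a neighborhood. But to count as a proof of the proposition you must supply the missing first four assertions.
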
\vspace*{-0.03in}
\begin{proof}
Representation \eqref{more-asp} easily follows from definitions of the Moreau envelope and Asplund function. Due to the second equality in \eqref{asp}, the Asplund function is convex on $\R^n$. It is also Lipschitz continuous due its finite-valuedness on $\R^n$, which is induced by this property of the Moreau envelope. The subdifferential evaluations in \eqref{eq_sub_eq01} and \cite[Example~10.32]{MR1491362} and the subdifferential sum rule in \cite[Proposition~1.30]{MR3823783}) tell us that
$\partial(-\Asp{\lambda}{\psi} )(x) = -\lambda^{-1}x + \partial (	 \MoreauYosida{\psi}{\lambda}) (x)$ and $\partial \Asp{\lambda}{\psi} (x) = \lambda^{-1} x + \partial (-\MoreauYosida{\psi}{\lambda}) (x)$ for any $x\in\R^n$.

Take further $v\in \Prox{\psi}{\lambda} (x) $ with $-\frac{1}{\lambda}v \not\in\partial(-\Asp{\lambda}{\psi}  ) (x)$ and show that $v \in {\rm co}\left( \Prox{\psi}{\lambda} (x)\backslash \{v\}\right)$. Indeed, it follows from \eqref{eq_sub_eq01} that
\begin{align*}
\partial(-\Asp{\lambda}{\psi}  ) (x) &\subseteq-  \frac{1}{\lambda}  \Prox{\psi}{\lambda} (x) \backslash\{v\}.
\end{align*}
The Lipschitz continuity and convexity of $\Asp{\lambda}{\psi}$ implies that 
\begin{align}\label{eqLemma26}
{\rm co}\,\partial(-\Asp{\lambda}{\psi}  ) (x) = -\partial\Asp{\lambda}{\psi} (x)
\end{align}
by \cite[Theorem~3.57]{MR2191744}, which allows us to deduce from \eqref{eq_sub_eq02} and \eqref{eqLemma26} that
\begin{equation*}
{\rm co}\big(\Prox{\psi}{\lambda} (x)\big) = {\rm co}\big( \Prox{\psi}{\lambda} (x) \backslash \{v\}\big).
\end{equation*}
This verifies the inclusion $v \in \co\left( \Prox{\psi}{\lambda} (x)\backslash \{v\}\right)$ as claimed.

Observe finally that the function $x\mapsto \Asp{\lambda}{\psi}_{\lambda }( x - \lambda \nabla f(x))$ is the composition of the convex function $ \Asp{\lambda}{\psi}$ and the $\mathcal{C}^{1,1}$ mapping $x\mapsto  x - \lambda \nabla f(x)$, which ensures by \cite[Proposition~2.3]{MR2069350} its prox-regularity at any  point $x\in \mathbb{R}^n$.
\end{proof}\vspace*{-0.05in}

The following remark discusses a useful representation of the basic subdifferential of the function $-\Asp{\lambda}{\psi}$ and other functions of this type.

\begin{remark}\label{asp-rem}
It is worth mentioning that the subdifferential $\partial (-\Asp{\lambda}{\psi})(x)$ can be expressed via the set $D:=\{x\in\R^n\;|\;\Asp{\lambda}{\psi}\;\mbox{ is differentiable at }\;x\}$ as follows:
\begin{equation}\label{asp-rem1}
\partial (-\Asp{\lambda}{\psi})(x) =\big\{ v\in\R^n\;\big|\;\text{ there exists } x_k \overset{D}{\to} x \text{ and } \nabla \Asp{\lambda}{\psi} (x_k) \to -v\big\}. 
\end{equation}
We refer to \cite[Theorem~10.31]{MR1491362} for more details. Note that we do not need to take the convex hull on the right-hand side of \eqref{asp-rem1} as in the case of the generalized gradient of locally Lipschitzian functions. 
\end{remark}

Finally, recall the definitions of the convergence rates used in the paper.\vspace*{-0.05in}

\begin{definition}\label{def:rates}
Let $\{x_k\}$ be a sequence in $\mathbb{R}^n$ converging to $\bar{x}$ as $k \rightarrow \infty$. The convergence rate is said to be:

{\bf(i)} \emph{R-linear} if there exist $\mu \in(0,1), c>0$, and $k_0 \in \mathbb{N}$ such that
$$
\left\|x_k-\bar{x}\right\| \leq c \mu^k\;\text { for all }\;k \geq k_0.
$$

{\bf(ii)} \emph{Q-linear} if there exists $\mu\in(0,1)$ such that
$$
\limsup_{k\to\infty}\frac{\left\|x_{k+1}-\bar{x}\right\|}{\left\|x_k-\bar{x}\right\|}=\mu.
$$

{\bf(iii)} \emph{Q-superlinear} if it is Q-linear for all $\mu\in (0,1)$, i.e., if
$$
\lim_{k\to\infty}\frac{\left\|x_{k+1}-\bar{x}\right\|}{\left\|x_k-\bar{x}\right\|}=0.
$$

{\bf(iv)} \emph{Q-quadratic} if we have
$$
\limsup_{k\to\infty}\frac{\left\|x_{k+1}-\bar{x}\right\|}{\left\|x_k-\bar{x}\right\|^2}<\infty.
$$
\end{definition}\vspace*{-0.2in}

\section{Regularized Coderivative-Based Damped Semi-Newton  Method in Nonsmooth Difference Programming}\label{sec:3}\vspace*{-0.05in}

The goal of this section is to justify the well-posedness and good performance of the novel algorithm RCSN under appropriate and fairly general assumptions. In the following remark, we discuss the difference between the choice of subgradients and hence of directions in RCSN and DC algorithms.\vspace*{0.05in}

Our main RCSN algorithm to find stationary points of nonsmooth problems \eqref{EQ01} of difference programming is labeled below as Algorithm~\ref{alg:1}.

\begin{algorithm}[h!]
\begin{algorithmic}[1]
\Require{$x_0 \in \R^n$, $\beta \in (0,1)$, $\zeta>0$, $t_{\min}>0 $, $\rho_{\max}>0$ and $\sigma\in(0,1)$.}
\For{$k=0,1,\ldots$}
\State Take $w_k\in  \partial \varphi (x_k)$. If $  w_k=0$, STOP and return $x_k$. 
\State Choose  $\rho_k\in[0,\rho_{\max}]$ and $d_k \in \mathbb{R}^n\backslash \{ 0\}$ such that
\begin{align}
-w_k\in \partial^2 g(x_k)(d_k)+\rho_kd_k\quad\text{and}\quad \langle w_k,d_k\rangle\leq -\zeta\|d_k\|^2. \label{EQALG01}
\end{align}
\State Choose any $\overline{\tau}_k\geq t_{\min}$. Set $\overline{\tau}_k:=\tau_k$.
\While{$\varphi(x_k + \tau_k d_k) > \varphi(x_k) +\sigma \tau_k \langle w_k ,   d_k\rangle $}
\State $\tau_k = \beta \tau_k$.
\EndWhile
\State Set $x_{k+1}:=x_k + \tau_kd_k$. \label{step5}
\EndFor
\end{algorithmic}
\caption{Regularized coderivative-based damped semi-Newton  algorithm for  nonsmooth difference programming}\label{alg:1}
\end{algorithm}\vspace*{-0.05in}

\begin{remark}\label{rem:subgr}
Observe that Step~2 of Algorithm~\ref{alg:1} selects $w_k\in\partial\varphi(x_k)=\nabla g(x_k)+\partial(-h)(x_k)$, which is equivalent to choosing $v_k:=w_k-\nabla g(x_k)$ in the basic subdifferential of $-h$ at $x_k$. Under our assumptions, the set $\partial(-h)(x_k)$ can be {\em considerably smaller} than $\partial h(x_k)$; see the proof of Proposition~\ref{Lemma:Dire01} and also Remark~\ref{asp-rem} above. Therefore, Step~2 differs from those in DC algorithms, which choose subgradients in $\partial h(x_k)$. The purpose of our development is to find a {\em stationary point} instead of a (classical) critical point for problem~\eqref{EQ01}.
In some applications, Algorithm~\ref{alg:1} would not be implementable if the user only has access to subgradients contained in $\partial h(x_k)$ instead of $\partial(-h)(x_k)$. In such cases, a natural alternative to Algorithm~\ref{alg:1} would be a scheme replacing $w_k\in\partial\varphi(x_k)$ in Step~2 by $w_k:=\nabla g(x_k)+v_k$ with $v_k\in\partial h(x_k)$. Under the setting of our convergence results, the modified algorithm would find a critical point for problem~\eqref{EQ01}, which is not guaranteed to be stationary.
\end{remark}\vspace*{-0.1in}

The above discussions are illustrated by the following example.\vspace*{-0.1in}

\begin{example}
Consider problem~\eqref{EQ01} with $g(x):=\frac{1}{2}x^2$ and $h(x):= |x|$. If an algorithm similar to Algorithm~\ref{alg:1} was run by using $x_0 =0$ as the initial point but choosing $w_0=\nabla g(x_0)+v_0$ with $v_0= 0 \in\partial h(0)$ (instead of $w_0\in\partial\varphi(x_0)$), it would stop at the first iteration and return $x=0$, which is a critical point, but not a stationary one. On the other hand, for any $w_0\in\partial \varphi(0)=\{-1,1\}$ we get $w_0\neq 0$, and so Algorithm~\ref{alg:1} will continue iterating until it converges to one of the two stationary points $-1/2$ and $1/2$, which is guaranteed by our main convergence result;  see Theorem~\ref{The01} below.
\end{example}\vspace*{-0.1in}
  	
The next lemma shows that Algorithm~\ref{alg:1} is well-defined by proving 
the existence of a direction $d_k$ satisfying \eqref{EQALG01} in Step~3 for sufficiently large regularization parameters $\rho_k$.\vspace*{-0.1in}

\begin{lemma}\label{lemma1}
Let $\varphi: \mathbb{R}^n \to \R$ be the objective function in problem~\eqref{EQ01} with $g\in\mathcal{C}^{1,1}$ and $h$ being locally Lipschitz around $\ox$ and prox-regular at this point. Further, assume that $\partial^2 g(\bar{x})$ is $\xi$-lower-definite for some $\xi\in\mathbb{R}$ and consider a nonzero subgradient $w\in \partial \varphi(\bar{x})$.
Then for any $\zeta>0$ and any $\rho\geq\zeta-\xi$, there exists a nonzero direction $d\in \mathbb{R}^n$ satisfying the inclusion
\begin{align}\label{Eq002}
-w \in\partial^2 g(\bar{x})(d)+\rho d.
\end{align}\vspace*{-0.05in}
Moreover, any nonzero direction from \eqref{Eq002} obeys the conditions:\\[1ex]
{\bf(i)}\label{lemma1b} $\varphi'(\bar{x}; d) \leq \langle w, d\rangle  \leq -\zeta\| d\|^2$.\\[1ex]
{\bf(ii)} \label{lemma1c} Whenever $\sigma \in (0,1)$, there exists $\eta >0$ such that
\begin{align*}
\varphi(\bar{x} + \tau d ) < \varphi(\bar{x})  + \sigma \tau \langle w, d\rangle \leq  \varphi(\bar{x})- \sigma \zeta  \tau \|d\|^2\;\mbox{ when }\;\tau \in (0,\eta).
\end{align*}
\end{lemma}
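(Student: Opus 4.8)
The plan is to treat the existence of the direction $d$ as the core of the argument and then deduce (i) and (ii) from the defining inclusion \eqref{Eq002} together with Proposition~\ref{Lemma:Dire01}. For existence, the idea is to exhibit a genuine \emph{linear} selection inside the generalized Hessian and invert it. Since $g\in\mathcal C^{1,1}$, the gradient $\nabla g$ is locally Lipschitz around $\ox$, hence by Rademacher's theorem it is differentiable almost everywhere nearby; at such points the Hessians $\nabla^2 g$ are bounded by the Lipschitz modulus and symmetric (as $\nabla g$ is a gradient field). First I would pick a sequence $x_k\to\ox$ of twice-differentiability points with $\nabla^2 g(x_k)\to A$ (a convergent subsequence exists by boundedness). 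Then for \emph{every} $d\in\R^n$ one has $\nabla^2 g(x_k)d=\nabla_x\langle d,\nabla g\rangle(x_k)\to Ad$, so by the scalarization formula \eqref{coder:sub} and the definition \eqref{2nd} it follows that $Ad\in\partial\langle d,\nabla g\rangle(\ox)=\partial^2 g(\ox)(d)$.

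Now I would invoke the $\xi$-lower-definiteness of $\partial^2 g(\ox)$ from Definition~\ref{def:lower-def}: since $Ad\in\partial^2 g(\ox)(d)$, we get $\langle Ad,d\rangle\ge\xi\|d\|^2$ for all $d$, and therefore $\langle(A+\rho I)d,d\rangle\ge(\xi+\rho)\|d\|^2\ge\zeta\|d\|^2>0$ for $d\neq 0$, using $\rho\ge\zeta-\xi$. Hence $A+\rho I$ is invertible, and $d:=-(A+\rho I)^{-1}w$ is nonzero (because $w\neq 0$) and satisfies $Ad+\rho d=-w$; as $Ad\in\partial^2 g(\ox)(d)$, this is precisely the inclusion \eqref{Eq002}.

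For part (i), I would first decompose the upper directional derivative \eqref{UDD}. Since $g$ is $\mathcal C^1$, the difference quotient of $g$ along $d$ converges, giving $\varphi'(\ox;d)=\langle\nabla g(\ox),d\rangle+(-h)'(\ox;d)$. Proposition~\ref{Lemma:Dire01}, applied to the locally Lipschitz prox-regular $h$, evaluates $(-h)'(\ox;d)=\inf\{\langle v,d\rangle\mid v\in\partial(-h)(\ox)\}$. Because $w\in\partial\varphi(\ox)=\nabla g(\ox)+\partial(-h)(\ox)$ by the sum rule, the vector $v_0:=w-\nabla g(\ox)$ lies in $\partial(-h)(\ox)$, whence $(-h)'(\ox;d)\le\langle v_0,d\rangle$ and so $\varphi'(\ox;d)\le\langle w,d\rangle$. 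The remaining bound follows from \eqref{Eq002}: writing $-w=z+\rho d$ with $z\in\partial^2 g(\ox)(d)$ and using lower-definiteness, $\langle -w,d\rangle=\langle z,d\rangle+\rho\|d\|^2\ge(\xi+\rho)\|d\|^2\ge\zeta\|d\|^2$, i.e.\ $\langle w,d\rangle\le-\zeta\|d\|^2$. For part (ii), the second inequality is immediate upon multiplying $\langle w,d\rangle\le-\zeta\|d\|^2$ by $\sigma\tau>0$, and the strict first inequality is a standard Armijo argument: since $\sigma\in(0,1)$ and $\langle w,d\rangle<0$ (as $d\neq0$, $\zeta>0$), we have $\varphi'(\ox;d)\le\langle w,d\rangle<\sigma\langle w,d\rangle$, and because $\varphi'(\ox;d)$ is the $\limsup_{\tau\to0^+}$ of $(\varphi(\ox+\tau d)-\varphi(\ox))/\tau$, this strict gap yields $\eta>0$ with $(\varphi(\ox+\tau d)-\varphi(\ox))/\tau<\sigma\langle w,d\rangle$ for all $\tau\in(0,\eta)$.

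The hard part will be the existence step, specifically the passage from the abstract lower-definiteness of the set-valued generalized Hessian to an invertible operator one can actually solve against. The crux is producing a matrix-type element of $\partial^2 g(\ox)$ (here via limiting Hessians and scalarization) and checking that lower-definiteness upgrades to positive definiteness of $A+\rho I$ through $\rho\ge\zeta-\xi$; the symmetry of limiting Hessians of a $\mathcal C^{1,1}$ gradient field makes this clean, although the quadratic-form estimate alone already guarantees invertibility. An alternative route would establish surjectivity of the positively homogeneous coderivative map $\partial^2 g(\ox)+\rho\,\mathrm{id}$ directly via a degree/coercivity argument, but the linear-selection construction above is more elementary and self-contained. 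Parts (i)–(ii) are then routine given Proposition~\ref{Lemma:Dire01} and the definition of the upper directional derivative.
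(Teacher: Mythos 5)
Your proof is correct, and parts (i)--(ii) follow the paper's own argument almost verbatim: the same split of the upper directional derivative, the same use of Proposition~\ref{Lemma:Dire01} together with the sum rule $\partial\varphi(\ox)=\nabla g(\ox)+\partial(-h)(\ox)$ to get $\varphi'(\ox;d)\le\langle w,d\rangle$, the same appeal to $\xi$-lower-definiteness for $\langle w,d\rangle\le-\zeta\|d\|^2$, and the same strict-gap Armijo argument for (ii). Where you genuinely diverge is the existence step. The paper introduces the auxiliary function $\psi(x)=g(x)+\langle w-\nabla g(\ox),x\rangle+\tfrac{\rho}{2}\|x\|^2$, observes that $\partial^2\psi(\ox)=\partial^2 g(\ox)+\rho I$ is $\zeta$-lower-definite with $\nabla\psi(\ox)=w\ne 0$, and then simply cites an external solvability result (Proposition~3.1 of the arXiv reference on coderivative-based Newton methods) to produce $d$ with $-\nabla\psi(\ox)\in\partial^2\psi(\ox)(d)$. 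You instead prove solvability from scratch: Rademacher plus the Lipschitz bound on $\nabla g$ yields a limiting Jacobian $A$ of $\nabla g$ at $\ox$; the scalarization formula \eqref{coder:sub} and the gradient-limit representation of the basic subdifferential give $Ad\in\partial\langle d,\nabla g\rangle(\ox)=\partial^2 g(\ox)(d)$ for every $d$; lower-definiteness then forces $\langle(A+\rho I)d,d\rangle\ge\zeta\|d\|^2$, so $A+\rho I$ is invertible and $d:=-(A+\rho I)^{-1}w$ does the job. This is essentially the content of the cited proposition, inlined; it buys self-containedness and an explicit formula for $d$, at the cost of a little fuss about symmetry of the a.e.\ Jacobian of $\nabla g$ --- which, as you correctly note, is dispensable, since the quadratic-form bound alone (applied to $A^{\tr}d\in\partial^2 g(\ox)(d)$ if one prefers not to invoke symmetry) already yields invertibility. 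No gaps.
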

\begin{proof}
Consider the function $\psi(x):=g(x)+\langle w-\nabla g(\bar{x}),x\rangle+\frac{\rho}{2}\|x\|^2$ for which  we clearly have that $\partial^2 \psi(\bar{x}) =\partial^2 g(\bar{x})+\rho I$, where $I$ denotes the identity mapping. This shows by Remark~\ref{rem:definite} that $\partial^2 \psi(\bar{x})$ is $(\xi+\rho)$-lower-definite, and thus it is $\zeta$-lower-definite as well. Since $\nabla \psi(\bar{x})=w\neq 0$ and $\zeta>0$, it follows from \cite[Proposition~3.1]{2021arXiv210902093D} (which requires $\psi$ to be $\mathcal{C}^{1,1}$ on $\R^n$, but actually only $\mathcal{C}^{1,1}$ around $\bar{x}$ is needed) that there exists a nonzero direction $d$ such that  	 $-\nabla \psi(\bar{x})\in \partial^2 \psi (\bar{x})(d)$. This readily verifies \eqref{Eq002}, which yields in turn the second inequality in (i) due to Definition~\ref{def:lower-def}. On the other hand, we have by Proposition~\ref{Lemma:Dire01} the following:
 \begin{align}\label{lemma1:INQ001}
 \begin{aligned}
 \varphi'(\bar{x}; d)=\lim\limits_{t \to 0^+}\frac{ g(\bar{x} +td) -g(\bar{x}) }{ t  }+\limsup\limits_{t \to 0^+}\frac{ -h(\bar{x} +td) +h(\bar{x}) }{t } \\
 = \langle \nabla g(\bar{x}), d \rangle  +  \inf\big\{\langle w, d\rangle\;\big|\;w\in -\partial h(\bar x)\big\}\leq \langle \nabla g(\bar{x}) + v, d\rangle \leq- \zeta \|d\|^2,
 \end{aligned}
 \end{align}
where in the last estimate is a consequence of the second inequality in (i).

Finally, assertion (ii) follows directly from \eqref{lemma1:INQ001} and the definition of directional derivatives \eqref{UDD}.
\end{proof}\vspace*{-0.2in}

\begin{remark} Under the $\xi$-lower-definiteness of $\partial^2 g(x_k)$, Lemma~\ref{lemma1} guarantees the existence of a direction $d_k$ satisfying both conditions in~\eqref{EQALG01} for all $\rho_k\geq \zeta-\xi$. {\em When $\xi$ is unknown}, it is still possible to implement Step~3 of the algorithm as follows. Choose first any initial value of $\rho\geq 0$, then compute a direction satisfying the inclusion in \eqref{EQALG01} and continue with Step~4 if the descent condition in \eqref{EQALG01} holds. Otherwise, increase the value of $\rho$ and repeat the process until the descent condition is satisfied.
\end{remark}\vspace*{-0.05in}

The next example demonstrates that the {\em prox-regularity} of $h$ is {\em not a superfluous assumption} in Lemma~\ref{lemma1}. Namely, without it the direction $d$ used in Step~3 of Algorithm~\ref{alg:1} can even be an {\em ascent direction}.\vspace*{-0.03in}

\begin{example}\label{ex:failure}
Consider the {\em least squares problem} given by
$$
\min_{x\in\mathbb{R}^2} \frac{1}{2}(Ax-b)^2+\| x\|_1 -  \|x\|_2,\quad x\in\R^2,
$$
with $A:=[1,0]$ and $b:=1$. Denote $g(x):=\frac{1}{2}\|Ax-b\|^2$ and $h(x):=\| x\|_2 -  \|x\|_1$. If we pick $\bar{x}:=(1,0)^\tr$, the function $h$ is not prox-regular at $\bar{x}$ because it is not lower regular at $\bar{x}$; see Proposition~\ref{Lemma:Dire01}. Indeed, $\Hat\partial h(\bar{x})=\emptyset$, while
$$
\partial h(\bar{x})=\frac{\bar{x}}{\|\bar{x}\|}+\partial(-\|\cdot\|_1)(\bar{x})=\left\{\begin{pmatrix} 0\\ -1  \end{pmatrix},\begin{pmatrix} 0\\ 1  \end{pmatrix}\right\}.
$$
Therefore, although $\nabla^2 g(\bar{x})=A^TA$ is $\lambda_{\min}(A^TA)$-lower-definite, the assumptions of Lemma~\ref{lemma1} are not satisfied. Due to the representation
$$
\partial (-h)(\bar{x})=-\frac{\bar{x}}{\|\bar{x}\|}+\partial\|\cdot\|_1(\bar{x})=\left\{\begin{pmatrix} 0\\ v  \end{pmatrix}\;\Bigg|\;v\in [-1,1]\right\},
$$
the choice of $v:=(0, 1)^\tr\in\partial (-h)(\bar{x})$ yields $w:=\nabla g(\bar{x})+v = (0, 1)^\tr\in\partial\varphi(\bar{x})$. For any $\rho>0$, inclusion \eqref{Eq002} gives us $d = (0,- 1/\rho)^\tr$. This is an ascent direction for the objective function $\varphi(x)=g(x)-h(x)$ at $\bar{x}$ due to
$$
\varphi(\bar{x}+\tau d)=1+\frac{\tau}{\rho}-\sqrt{1+(\tau/\rho)^2}>\varphi(\bar{x})=0\;\mbox{ for all }\;\tau>0,
$$
which illustrates that the prox-regularity is an essential assumption in Lemma~\ref{lemma1}.
\end{example}\vspace*{-0.07in}

Algorithm~\ref{alg:1} either stops at a stationary point, or produces an infinite sequence of iterates. The convergence properties of the iterative sequence of our algorithm are obtained below in the main theorem of this section. Prior to the theorem, we derive yet another lemma, which establishes the following {\em descent property} for the difference of a $\mathcal{C}^{1,1}$  function and a prox-regular one.\vspace*{-0.05in}

 \begin{lemma}\label{Lemma:01}
 Let $\varphi(x) = g(x) -h(x)$, where $g$ is of class $\mathcal{C}^{1,1}$ around $\ox$, and where $h$ is continuous around $\ox$ and prox-regular at this point. Then for every $\bar{v} \in \partial h(\bar{x})$, there exist positive numbers $\epsilon$ and $r$ such that
\begin{align*}
 \varphi(y) \leq \varphi(x) + \langle \nabla g(x) - v , y-x \rangle +r \| y-x\|^2
\end{align*}
whenever $x, y \in \mathbb{B}_{\epsilon}(\bar{x})$ and $v\in \partial h(x) \cap \mathbb{B}_{\epsilon}(\bar{v}) $. 	
\end{lemma}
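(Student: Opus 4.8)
The plan is to split the quantity $\varphi(y)-\varphi(x)-\langle\nabla g(x)-v,y-x\rangle$ into a contribution coming from $g$ and one coming from $h$, and to bound each separately by a multiple of $\|y-x\|^2$. Using $\varphi=g-h$, this quantity equals
\[
\big[g(y)-g(x)-\langle\nabla g(x),y-x\rangle\big]-\big[h(y)-h(x)-\langle v,y-x\rangle\big],
\]
so it suffices to bound the first bracket from above and the second from below, each by a constant multiple of $\|y-x\|^2$.

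First I would handle the $g$-term using the $\mathcal{C}^{1,1}$ regularity of $g$ around $\bar{x}$. There is a radius $\epsilon_1>0$ and a constant $L\geq 0$ such that $\nabla g$ is $L$-Lipschitz on $\mathbb{B}_{\epsilon_1}(\bar{x})$. Since this ball is convex, for $x,y\in\mathbb{B}_{\epsilon_1}(\bar{x})$ the whole segment $[x,y]$ lies inside it, and the standard descent estimate (obtained by integrating $\nabla g$ along this segment) gives $g(y)-g(x)-\langle\nabla g(x),y-x\rangle\leq\tfrac{L}{2}\|y-x\|^2$.

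Next I would treat the $h$-term using the prox-regularity of $h$ at $\bar{x}$ for $\bar{v}$. The defining inequality \eqref{proregularity} supplies $\epsilon_0>0$ and $r_0\geq 0$ with $h(y)\geq h(x)+\langle v,y-x\rangle-\tfrac{r_0}{2}\|y-x\|^2$ whenever $x,y\in\mathbb{B}_{\epsilon_0}(\bar{x})$, $v\in\partial h(x)\cap\mathbb{B}_{\epsilon_0}(\bar{v})$, and additionally $h(x)\leq h(\bar{x})+\epsilon_0$. Rearranged, this is exactly the lower bound $-[h(y)-h(x)-\langle v,y-x\rangle]\leq\tfrac{r_0}{2}\|y-x\|^2$ required for the second bracket.

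The main obstacle is the activation condition $h(x)\leq h(\bar{x})+\epsilon_0$, which is built into the definition of prox-regularity but is \emph{not} among the explicit hypotheses of the lemma. This is precisely where the continuity of $h$ enters (and why mere continuity, rather than anything stronger, suffices for this term): by continuity I can choose $\epsilon_2\in(0,\epsilon_0]$ so that $|h(x)-h(\bar{x})|<\epsilon_0$ for all $x\in\mathbb{B}_{\epsilon_2}(\bar{x})$, which forces $h(x)\leq h(\bar{x})+\epsilon_0$ automatically. Taking $\epsilon:=\min\{\epsilon_1,\epsilon_2\}$ and $r:=\tfrac12(L+r_0)$ and summing the two bounds then yields the asserted inequality for all $x,y\in\mathbb{B}_{\epsilon}(\bar{x})$ and $v\in\partial h(x)\cap\mathbb{B}_{\epsilon}(\bar{v})$, completing the argument.
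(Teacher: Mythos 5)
Your proposal is correct and follows essentially the same route as the paper: the upper quadratic estimate for $g$ from its $\mathcal{C}^{1,1}$ property (the paper cites a descent-lemma reference rather than integrating along the segment) is added to the lower quadratic estimate for $h$ from prox-regularity, with $\epsilon$ taken as the minimum of the two radii and $r$ as the sum of the two constants. Your explicit observation that continuity of $h$ is what discharges the activation condition $h(x)\leq h(\bar{x})+\epsilon$ in the definition of prox-regularity is exactly the (tacit) role continuity plays in the paper's one-line derivation of the estimate for $-h$.
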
\vspace*{-0.05in}
\begin{proof}
Pick any $\bar{v} \in \partial h(\bar{x})$ and deduce from the imposed prox-regularity and continuity of $h$ that there exist $\epsilon_1>0$ and $r_1>0$ such that
\begin{align}\label{Lemma:01:Eq01}
-h(y) \leq -h(x) + \langle - v , y - x \rangle + r_1 \| y-x\|^2\;\mbox{ for all }\;x,y \in \mathbb{B}_{\epsilon_1}(\bar{x})
 \end{align}
and all $v\in \partial h(x)\cap \mathbb{B}_{\epsilon_1}(\bar{v}) $. It follows from the $\mathcal{C}^{1,1}$ property of $g$ by \cite[Lemma~A.11]{MR3289054}  that there exist positive numbers $r_2$ and $\epsilon_2$ such that
\begin{align}\label{Lemma:01:Eq02}
 g(y) \leq g(x) + \langle \nabla g(x) , y-x \rangle + r_2 \| y- x\|^2\;\mbox{ for all }\;\mathbb{B}_{\ve_2}.
 \end{align}
 Summing up the inequalities in \eqref{Lemma:01:Eq01} and \eqref{Lemma:01:Eq02} and defining $r:= r_1 +r_2$ and $\epsilon := \min\{ \epsilon_1,\epsilon_2\}$, we get that
 \begin{align*}
 g(y)- h(y) \leq g(x) - h(x) + \langle \nabla g(x)-v , y-x \rangle + r\| y-x\|^2
 \end{align*}
for all $x,y \in \mathbb{B}_\epsilon(\bar{x})$ and all $v\in \partial h(x) \cap \mathbb{B}_\epsilon(\bar{v})$. This completes the proof.
 \end{proof}\vspace*{-0.1in}

Now we are ready to establish the aforementioned theorem about the performance of Algorithm~\ref{alg:1}.\vspace*{-0.05in}
 
\begin{theorem}\label{The01}
Let $\varphi: \mathbb{R}^n \to \Rex$ be the objective function of   problem~\eqref{EQ01} given by $\varphi =  g-h$ with $\inf \varphi >-\infty$. Pick an initial point $x_0 \in \mathbb{R}^n$  and  suppose that the sublevel set $\Omega:=\{x \in\mathbb{R}^n\;|\;\varphi(x) \leq \varphi(x_0)\}$ is closed. Assume also that:\\[0.5ex]
{\bf(a)} \label{Theo01ass:a} The function $g$ is $\mathcal{C}^{1,1}$ around every $x\in\Omega$ and the second-order subdifferential $\partial^2 g(x)$ is $\xi$-lower-definite for all $x\in \Omega$ with some $\xi\in\mathbb{R}$.\\[1ex]
{\bf(b)} The function $h$ is locally Lipschitzian and prox-regular on $\Omega$.\\[0.5ex]
Then Algorithm~{\rm\ref{alg:1}} either stops at a stationary point, or produces sequences $\{x_k\} \subseteq \Omega$, $\{\varphi(x_k)\}$, $\{w_k\}$, $\{d_k\}$, and $\{\tau_k\}$ such that:\\[0.5ex]
{\bf(i)} \label{The01a} The sequence $\{\varphi(x_k)\}$ monotonically decreases and converges.\\[0.5ex]
{\bf(ii)}\label{The01b} If $\{x_{k_j}\}$ as $j\in \N$ is any bounded subsequence of $\{x_k\}$, then $\disp\inf_{j\in \N}\tau_{k_j}>0$,
\begin{align*}
\sum\limits_{j \in \N}\| d_{k_j}\|^2 < \infty,\; \sum\limits_{j\in \N } \| x_{k_j +1} - x_{k_j}\|^2< \infty,\;\text{ and }\;\sum\limits_{j\in \N } \|w_{k_j} \|^2<\infty.
\end{align*}
In particular, the boundedness of the entire sequence $\{x_k\}$ ensures that the set of accumulation points of $\{x_k\}$ is a nonempty, closed, and connected.\\[0.5ex]
{\bf(iii)}\label{The01c} If $x_{k_j} \to \bar{x}$ as $j\to\infty$, then $\bar{x}$ is a stationary  point of problem \eqref{EQ01} with the property $\varphi(\bar{x}) =\disp\inf_{k\in \N} \varphi (x_k)$.\\[0.5ex]
{\bf(iv)}\label{The01d} If $\{x_k\}$ has an isolated accumulation point $\bar{x}$, then the entire sequence $\{x_k\}$ converges to $\bar{x}$ as $k\to\infty$, where $\bar x$ is a stationary point of \eqref{EQ01}.	
\end{theorem}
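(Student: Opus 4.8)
The plan is to handle the four assertions in sequence, first distilling from the Armijo line search a single summable descent quantity, and then upgrading it to the stated limits by a compactness argument built on Lemma~\ref{Lemma:01}. To begin, I would note that the exit condition of the while loop gives $\varphi(x_{k+1})\leq\varphi(x_k)+\sigma\tau_k\langle w_k,d_k\rangle$, and since $\langle w_k,d_k\rangle\leq-\zeta\|d_k\|^2<0$ by \eqref{EQALG01}, the values $\{\varphi(x_k)\}$ are strictly decreasing; being bounded below by $\inf\varphi>-\infty$ they converge, which is (i), and $\varphi(x_k)\leq\varphi(x_0)$ forces $x_k\in\Omega$ (closedness of $\Omega$ then guarantees the standing assumptions hold at all accumulation points). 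Telescoping the Armijo inequality yields the master estimate $\sigma\zeta\sum_k\tau_k\|d_k\|^2\leq\varphi(x_0)-\lim_k\varphi(x_k)<\infty$, so $\sum_k\tau_k\|d_k\|^2<\infty$ and in particular $\tau_k\|d_k\|^2\to0$; this one series drives everything that follows.

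The core is (ii), and the hard part is the uniform lower bound $\inf_j\tau_{k_j}>0$. Fix a bounded subsequence and let $K$ be the compact closure of $\{x_{k_j}\}$. Writing $w_{k_j}=\nabla g(x_{k_j})+v_{k_j}$ with $v_{k_j}\in\partial(-h)(x_{k_j})$, local Lipschitz continuity of $h$ and continuity of $\nabla g$ bound $\{w_{k_j}\}$ on $K$, say $\|w_{k_j}\|\leq M$; combined with $\zeta\|d_{k_j}\|^2\leq-\langle w_{k_j},d_{k_j}\rangle\leq M\|d_{k_j}\|$ this gives the a priori bound $\|d_{k_j}\|\leq M/\zeta$. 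Since $-v_{k_j}\in\partial h(x_{k_j})$ by Proposition~\ref{Lemma:Dire01}, I would apply Lemma~\ref{Lemma:01} at each $x_{k_j}$: because $\gph\partial h$ restricted to $K$ is compact (outer semicontinuity plus local boundedness of $\partial h$), a finite subcover produces \emph{uniform} constants $\epsilon,r>0$ with $\varphi(x_{k_j}+\tau d_{k_j})\leq\varphi(x_{k_j})+\tau\langle w_{k_j},d_{k_j}\rangle+r\tau^2\|d_{k_j}\|^2$ whenever $\tau\|d_{k_j}\|<\epsilon$. Comparing with the Armijo test shows it is passed once $\tau\leq(1-\sigma)\zeta/r$ and $\tau<\epsilon/\|d_{k_j}\|$; the bound $\|d_{k_j}\|\leq M/\zeta$ makes the second threshold at least $\epsilon\zeta/M$, so backtracking stops with $\tau_{k_j}\geq\min\{t_{\min},\beta(1-\sigma)\zeta/r,\beta\epsilon\zeta/M\}=:\underline\tau>0$. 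The three series now follow: $\sum_j\|d_{k_j}\|^2\leq\underline\tau^{-1}\sum_j\tau_{k_j}\|d_{k_j}\|^2<\infty$; since $\tau_{k_j}\leq\overline{\tau}_{k_j}$ is bounded above, $\sum_j\|x_{k_j+1}-x_{k_j}\|^2=\sum_j\tau_{k_j}^2\|d_{k_j}\|^2<\infty$; and because $g\in\mathcal{C}^{1,1}$ gives $\|w_{k_j}\|\leq(L+\rho_{\max})\|d_{k_j}\|$ via the inclusion in \eqref{EQALG01} and the Lipschitz bound $L$ on the coderivative of $\nabla g$ over $K$, also $\sum_j\|w_{k_j}\|^2<\infty$. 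For the entire bounded sequence these same bounds give $\|x_{k+1}-x_k\|^2=\tau_k(\tau_k\|d_k\|^2)\to0$, whence the accumulation set is nonempty, closed, and connected by the classical Ostrowski result for sequences with asymptotically vanishing steps.

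Finally, for (iii) I would take $x_{k_j}\to\bar{x}$; continuity of $\varphi$ yields $\varphi(\bar{x})=\lim_k\varphi(x_k)=\inf_k\varphi(x_k)$, and by (ii) both $d_{k_j}\to0$ and $w_{k_j}\to0$. Passing to the limit in $v_{k_j}=w_{k_j}-\nabla g(x_{k_j})\in\partial(-h)(x_{k_j})$ and using the closedness of $\gph\partial(-h)$ (robustness of the limiting subdifferential of the continuous function $-h$) gives $-\nabla g(\bar{x})\in\partial(-h)(\bar{x})$, i.e.\ $0\in\nabla g(\bar{x})+\partial(-h)(\bar{x})=\partial\varphi(\bar{x})$, so $\bar{x}$ is stationary. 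For (iv), an isolated accumulation point $\bar{x}$ together with the \emph{local} version of $\|x_{k+1}-x_k\|\to0$ (which holds on any bounded region around $\bar{x}$ by the estimates above, without assuming global boundedness) lets me invoke the standard fact that a sequence with vanishing steps converges to any isolated accumulation point; the limit is stationary by (iii). The main obstacle throughout is precisely the uniform step-size bound in (ii): it is here that prox-regularity of $h$ is indispensable, since the descent inequality of Lemma~\ref{Lemma:01}, and hence the uniform constant $r$ controlling the backtracking, fails without it (cf.\ Example~\ref{ex:failure}).
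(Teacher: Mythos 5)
Your proof is correct and reaches all four assertions by essentially the same overall architecture as the paper (Armijo descent for (i), the master series $\sum_k\tau_k\|d_k\|^2<\infty$, a uniform stepsize bound along bounded subsequences, the coderivative bound $\|w_{k}+\rho_kd_k\|\le L\|d_k\|$ for the third series, Ostrowski for connectedness, closedness of the subdifferential graph for (iii), and the standard isolated-accumulation-point argument for (iv)). Where you genuinely diverge is in the key step $\inf_j\tau_{k_j}>0$. The paper argues by contradiction: it extracts a sub-subsequence with $\tau_{\nu_i}\to0^+$ and convergent $x_{\nu_i},w_{\nu_i},d_{\nu_i}$, applies Lemma~\ref{Lemma:01} only at the single limit point, and derives $\sigma>1-\tfrac{r}{\zeta\beta}\tau_{\nu_i}\to1$, contradicting $\sigma<1$. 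You instead uniformize the constants $\epsilon,r$ of Lemma~\ref{Lemma:01} over the whole compact closure $K$ of the subsequence by a finite subcover of the compact set $\gph\partial h\cap(K\times\R^n)$ (compactness holding by outer semicontinuity and local boundedness of $\partial h$ for locally Lipschitzian $h$), and then read off the explicit bound $\tau_{k_j}\ge\min\{t_{\min},\beta(1-\sigma)\zeta/r,\beta\epsilon\zeta/M\}$. This buys a quantitative lower bound rather than a soft contradiction, at the price of having to justify the uniformization, which the paper's pointwise argument avoids; both routes rely on prox-regularity in exactly the same place, through Lemma~\ref{Lemma:01}. One shared loose end: passing from $\sum_j\tau_{k_j}\|d_{k_j}\|^2<\infty$ to $\sum_j\tau_{k_j}^2\|d_{k_j}\|^2<\infty$ requires the accepted stepsizes to be bounded above, which the algorithm as stated does not force (the trial values $\overline{\tau}_k$ are only bounded below by $t_{\min}$); you at least make this hypothesis explicit, whereas the paper's justification (``again Claim~1'') cites a lower bound that cannot supply it.
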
\vspace*{-0.05in}
\begin{proof}
If Algorithm~\ref{alg:1} stops after a finite number of iterations, then it clearly returns a stationary point. Otherwise, it produces an infinite sequence $\{x_k\}$. By Step~5 of Algorithm~\ref{alg:1} and Lemma~\ref{lemma1}, we have that $\inf \varphi\le\varphi(x_{k+1}) <  \varphi(x_k) $ for all $k \in \N$, which proves assertion (i) and also shows that $\{x_k\} \subseteq \Omega$.

To proceed, suppose that $\{x_k\}$ has a bounded subsequence 
$\{x_{k_j}\}$ (otherwise there is nothing to prove) and split the rest of the proof into the {\em five claims}.\vspace*{0.03in}
	
\noindent\textbf{Claim~1:} \emph{The sequence $\{\tau_{k_j}\}$, associated with $\{x_{k_j}\}$ as $j\in\N$ and produced by Algorithm~{\rm\ref{alg:1}}, is bounded from below.}\\
Indeed, otherwise consider  a subsequence $\{\tau_{\nu_i}\}$ of $\{\tau_{k_j}\}$ such that $\tau_{ \nu_i} \to 0^+$ as $i\to \infty$. Since $\{x_{k_j}\}$ is bounded, we can assume that $\{x_{\nu_i}\} $ converges to some point $\bar{x}$. By Lemma~\ref{lemma1}, we have that
\begin{align}\label{EQ002}
-\langle w_{\nu_i}, d_{\nu_i} \rangle \geq \zeta \|d_{\nu_i}\|^2\;\mbox{ for all }\;i\in\N,
\end{align}
which yields by the Cauchy--Schwarz inequality the estimate
\begin{align}\label{EQ002bis}
\|w_{\nu_i}\|\geq \zeta \| d_{\nu_i}\|,\quad i\in\N.
\end{align}
Since $\varphi$ is locally Lipschitzian and $w_{\nu_i} \in \partial\varphi (x_{{\nu_i}})$, we suppose without loss of generality that $w_{\nu_i}$ converges to some $\bar{w} \in \partial\varphi(\bar{x}) \subseteq  \nabla g(\bar{x})- \partial h(\bar{x})$ as $i\to\infty$. It follows from \eqref{EQ002bis} that $\{d_{\nu_i}\}$ is bounded, and therefore $d_{\nu_i} \to\bar{d}$ along a subsequence. Since $\tau_{ \nu_i } \to 0^+$, we can assume that $\tau_{ \nu_i }<t_{\min}$ for all $i\in\N$, and hence Step~5 of Algorithm~\ref{alg:1} ensures the inequality
\begin{align}\label{EQ003}
\varphi(x_{\nu_i} + \beta^{-1}\tau_{\nu_i} d_{\nu_i}) > \varphi(x_{\nu_i}) +\sigma \beta^{-1}\tau_{\nu_i} \langle w_{\nu_i},d_{\nu_i}\rangle,\quad i\in\N.
\end{align}
Lemma~\ref{Lemma:01}  gives us a constant $r>0$ such that 
\begin{align}\label{EQ004}
\varphi(x_{\nu_i} + \beta^{-1}\tau_{\nu_i} d_{\nu_i})\le\varphi(x_{\nu_i}) +\beta^{-1}\tau_{\nu_i} \langle w_{\nu_i} ,
d_{\nu_i}\rangle+r\beta^{-2}\tau_{\nu_i}^2\|d_{\nu_i}\|^2
\end{align}
for all $i$ sufficiently large. Combining \eqref{EQ003},~\eqref{EQ004}, and \eqref{EQ002} tells us that
\begin{equation*}
\begin{array}{ll}
\sigma \beta^{-1}\tau_{\nu_i} \langle w_{\nu_i} ,  d_{\nu_i}\rangle<	\varphi(x_{\nu_i} + \beta^{-1}\tau_{\nu_i} d_{\nu_i}) - \varphi(x_{\nu_i})\\
\leq \beta^{-1}\tau_{\nu_i} \langle w_{\nu_i} ,   d_{\nu_i}\rangle+r\beta^{-2}\tau_{\nu_i}^2\|d_{\nu_i}\|^2\leq \beta^{-1}\tau_{\nu_i}\left(1-\disp\frac{r}{\zeta\beta}\tau_{\nu_i}\right)\langle w_{\nu_i}, d_{\nu_i}\rangle
\end{array}
\end{equation*}
for large $i$. Since $\langle w_{\nu_i} ,d_{\nu_i}\rangle<0$ by \eqref{EQ002},  we get that $\sigma >1 - \frac{r}{ \zeta\beta  } \tau_{ \nu_i }$
for such $i$, which contradicts the choice of $\sigma \in (0,1)$ and thus verifies this claim.\vspace*{0.03in}

\noindent\textbf{Claim~2:}  \emph{We have the series convergence $\sum_{j \in \N} \| d_{k_j}\|^2 < \infty $, $\sum_{j\in \N } \| x_{k_j +1} - x_{k_j}\|^2 < \infty$,  and $\sum_{j\in \N } \| w_{k_j}\|^2 < \infty$.}\\
To justify this, deduce from Step~5 of Algorithm~\ref{alg:1} and Lemma~\ref{lemma1} that
\begin{align*}
\sum\limits_{k\in \N}  \zeta\tau_k \| d_k\|^2 \leq \frac{1}{\sigma   }  \Big( \varphi(x_0) - \inf_{k\in \N} \varphi(x_k)\Big).
\end{align*}
It follows from Claim~1 that $\zeta\tau_{k_j} >\gamma >0$ for all $j\in \N$, which yields $\sum_{j\in \N}\| d_{k_j}\|^2  <\infty$. On the other hand, we have that $\| x_{k_j +1} - x_{k_j}\|= \tau_{k_j}\| d_{k_j}\|$, and again Claim~1 ensures that $\sum_{j\in \N } \| x_{k_j +1} - x_{k_j}\|^2 < \infty$. To proceed further, let $l_2 := \sup\{ \| d_{k_j}\|\;|\;\in \N \}$ and use the Lipschitz continuity of $\nabla g$ on the compact set ${\rm cl}\{x_{k_j}\;|\;{j\in \N}\} \subseteq \Omega$. Employing the subdifferential condition from \cite[Theorem~4.15]{MR3823783} together with the coderivative scalarization in \eqref{coder:sub}, we get by the standard compactness argument the existence of $l_3>0$ such that
\begin{align*}
w \in \partial \langle d, \nabla g\rangle (x_{k_j})= \partial^2 g(x_{k_j})(d)  \Longrightarrow \| w\| \leq l_3
\end{align*}
for all $j\in\N$ and all $d\in \mathbb{B}_{l_2}(0)$. Therefore, it follows from the inclusion $-w_{k_j} \in \partial^2 g(x_{k_j})(d_{k_j}) +\rho_{k_j}d_{k_j}$  that we have
\begin{align}\label{LipGrad}
\|w_{k_j} +\rho_{k_j}d_{k_j}\| \leq l_3 \| d_{k_j}\|\;\text{ for all large }\;j \in \N.
\end{align}
 Using finally the triangle inequality and the estimate $\rho_k\leq \rho_{\max}$ leads us to the series convergence $\sum_{j\in \N } \| w_{k_j} \|^2 < \infty$ as stated in Claim~2.\vspace*{0.03in}
	
 \noindent\textbf{Claim~3:} \emph{If the sequence $\{x_k\}$ is bounded, then the set of its accumulation points is nonempty, closed and connected.}\\
Applying Claim~2 to the sequence $\{x_k\}$, we have the \emph{Ostrowski condition} $\lim_{k \to \infty }\| x_{k +1} - x_{k}\| = 0$. Then, the conclusion follows from \cite[Theorem~28.1]{Ostrowski1966}.

\noindent\textbf{Claim~4:} \emph{If $x_{k_j} \to \bar{x}$ as $j\to\infty$, then $\bar{x}$ is a stationary point of \eqref{EQ01} being such that $\varphi(\bar{x}) = \inf_{k\in \N} \varphi (x_k)$.} \\
By Claim 2, we have that the sequence $w_{k_j} \in \partial \varphi(x_{k_j})$ with $w_{k_j} \to 0$ as $j\to\infty$. The closedness of the basic subgradient set ensures that $0 \in \partial \varphi (\bar{x})$. The second assertion of the claim follows from the continuity of $\varphi$ at $\bar{x} \in \Omega$.

\noindent\textbf{Claim~5:} \emph{If $\{x_k\}$ has an isolated accumulation point~$\bar{x}$, then the entire sequence of $x_k$ converges to $\bar{x}$ as $k\to\infty$, and $\ox$ is a stationary point of \eqref{EQ01}.}
Indeed, consider any subsequence $x_{k_j} \to \bar{x}$. By Claim~4, $\bar{x}$ is a stationary point of \eqref{EQ01}, and it follows from Claim~2 that
$\lim_{j\to \infty}  \| x_{k_j +1}-x_{k_j}\|=0$. Then we deduce from by \cite[Proposition~8.3.10]{MR1955649} that $x_k \to \bar{x}$ as $k\to\infty$, which completes the proof of theorem.
\end{proof}\vspace*{-0.2in}

\begin{remark}\label{rem:theorem}
Regarding Theorem~\ref{The01}, observe the following:

(i) If $h=0$, $g$ is of class $\mathcal{C}^{1,1}$, and $\xi>0$, then the results of Theorem~\ref{The01} can be found in \cite{2021arXiv210902093D}.

(ii) If $\xi\geq 0$, we can choose the regularization parameter $\rho_k:=c\|w_k\|$ and (a varying) $\zeta:=c\|w_k\|$ in~\eqref{EQALG01} for some $c>0$ to verify that assertions (i) and (iii) of Theorem~\ref{The01} still hold. Indeed, if $\{x_{k_j}\}$ converges to some $\bar{x}$, then  $\{w_{k_j}\}$ is bounded by the Lipschitz continuity of $\ph$. Hence the sequence $\{w_{k_j}\}$ converges to $0$. Otherwise, there exists $M>0$  and a subsequence of $\{w_{k_j}\}$ whose norms are bounded from below by $M$. Using the same argumentation as in the proof of Theorem~\ref{The01} with $\zeta=c M$, we arrive at the contradiction with $0$ being an accumulation point of of $\{w_{k_j}\}$.
\end{remark}\vspace*{-0.05in}

When the objective function $\varphi$ is coercive and its stationary points are  isolated, Algorithm~\ref{alg:1} converges to a stationary point because Theorem~\ref{The01}(iii) ensures that the set of accumulation points is connected. This property enables us to prove the convergence in some settings when even there exist nonisolated accumulation points; see the two examples below.\vspace*{-0.05in}

\begin{example}
Consider the function $\varphi: \mathbb{R} \to \R$ given by
\begin{align*}
\varphi(x) & :=\int_0^x t^4 \sin\left(\frac{\pi}{t}\right) dt.
\end{align*} 
This function is clearly $\mathcal{C}^2$-smooth and coercive. For any starting point $x_0$, the level set $\Omega =\{ x\;|\;\varphi(x) \leq \varphi(x_0) \}$ is bounded, and hence there exists a number $\xi\in \R$ such that the functions $g(x) :=\varphi(x)$  and $h(x):=0$ satisfy the assumptions of Theorem~\ref{The01}. Observe furthermore that $\varphi$ is a DC function because it is $\mathcal{C}^2$-smooth; see, e.g., \cite{Oliveira_2020,hiriart}. However, it is not possible to write its DC decomposition with $g(x) = \varphi(x) + ax^2$ and $h(x)=ax^2$ for $a>0$, since there exists no  scalar $a>0$ such that the function $g(x) = \varphi(x) + ax^2$ is convex on the entire real line.

It is easy to see that the stationary points of $\varphi$ are described by $S:=\left\{ \frac{1}{n}\;\big|\;n \in\mathbb{Z}\backslash\{ 0\} \right\}\cup\{ 0\}$. Moreover, if Algorithm~\ref{alg:1} generates an iterative sequence $\{x_k\}$ starting from $x_0$, then the accumulation points form by Theorem~\ref{The01}(ii) a nonempty, closed, and connected set $A \subseteq S$ .
If $A=\{ 0\}$, the sequence $\{x_k\}$ converges to $\bar{x}=0$. If $A$ contains any point of the form $\bar{x}=\frac{1}{n}$, then it is an isolated point, and Theorem~\ref{The01}(iv) tells us that the entire sequence $\{x_k\}$ converges to that point, and consequently we have $A=\{\bar{x}\}$.
\end{example}

\begin{example}\label{example3.10}
Consider the function $\varphi: \mathbb{R}^n \to \R$ given by
\begin{align*}
\varphi(x):=\sum_{i=1}^n \varphi_i(x_i),\; \text{ where }\;\varphi_i(x_i):= g_i(x_i) - h_i(x_i) \\
\text{ with }\;g_i(x_i):= \frac{1}{2}x_i^2\; \text{ and }\;h_i(x_i):= |x_i| +\big| 1-|x_i|\,\big|.
\end{align*}
We can easily check that the function $\varphi$ is coercive and satisfies the  assumptions of Theorem~\ref{The01} with $g(x):=\sum_{i=1}^n g_i(x_i)$, $h(x):= \sum_{i=1}^n h_i(x_i)$, and $\xi=1$. For this function, the points in the set $\{-2, -1,0,1,2\}^n$ are critical but not stationary. Moreover, the points in the set $\{-2,0,2 \}^n$ give the global minima to the objective function $\ph$. Therefore, Algorithm~\ref{alg:1} leads us to global minimizers of $\ph$ starting from any initial point.

 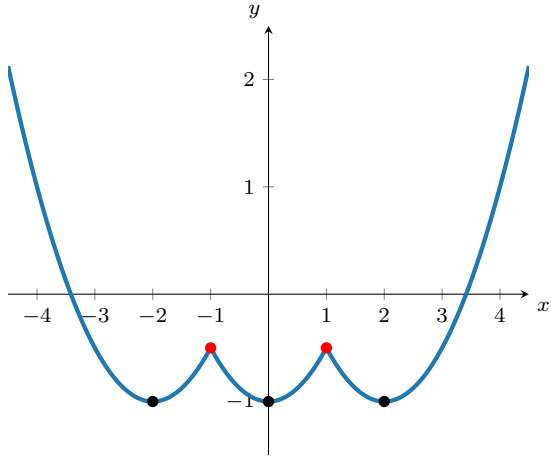
\begin{figure}[h!!]
\centering\begin{tikzpicture}
\begin{axis}[
axis x line=center,
axis y line=center,
xtick={-5,-4,...,5},
ytick={-5,-4,...,4},
xlabel={$x$},
ylabel={$y$},
xlabel style={below right},
ylabel style={above left},
xmin = -4.5, xmax = 4.5,
ymin = -1.5, ymax = 2.5]
\addplot[
domain = -4.5:4.5,
samples = 200,
smooth,
ultra thick,
color=C0,
] {0.5*x*x  - abs(x)- abs(  1-abs(x))  )};
\node at (0,-1)[circle,fill,inner sep=1.5pt]{};
\node at (2,-1)[circle,fill,inner sep=1.5pt]{};
\node at (-2,-1)[circle,fill,inner sep=1.5pt]{};
\node at (-1,-0.5)[circle,fill=red,inner sep=1.5pt]{};
\node at (1,-0.5)[circle,fill=red,inner sep=1.5pt]{};
\end{axis}
\end{tikzpicture}
\caption{ Plot of the function  $\varphi_i $ in Example \ref{example3.10}}
\label{fig:screenshot001}
\end{figure}
\end{example}\vspace*{-0.1in}

The following theorem establishes convergence rates of the iterative sequences in Algorithm~\ref{alg:1} under some additional assumptions.\vspace*{-0.05in}

\begin{theorem}\label{corSMR}
Suppose in addition to the assumptions of Theorem~{\rm\ref{The01}}, that $\{x_k\}$ has an accumulation point $\bar{x}$ such that the subgradient mapping $\partial \varphi$ is strongly metrically subregular at $(\bar{x},0)$. Then the entire sequence $\{x_k\}$ converges to $\bar{x}$ with the Q-linear convergence rate for $\{\varphi(x_k)\}$ and the R-linear convergence rate for $\{x_k\}$ and $\{w_k\}$. If furthermore, $\xi>0$, $0<\zeta\leq\xi$, $\rho_k\to 0$, $\sigma \in(0,\frac{1}{2})$, $t_{\min}=1$, $g$ is semismoothly differentiable at $\bar{x}$, $h$ is  of class $\mathcal{C}^{1,1}$ around $\bar{x}$, and $\clm \nabla h(\bar{x})=0$, then the rate of convergence of all the sequences above is at least Q-superlinear. 
\end{theorem}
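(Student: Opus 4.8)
The plan is to treat the two assertions separately, using strong metric subregularity to upgrade the qualitative convergence of Theorem~\ref{The01} into quantitative rates.

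\textbf{Linear rates.} First I would note that strong metric subregularity of $\partial\varphi$ at $(\bar x,0)$ forces $\bar x$ to be an \emph{isolated} stationary point: if $x$ is stationary and close to $\bar x$, then $(x,0)\in\gph\partial\varphi$, so \eqref{def:stron_subreg} gives $\|x-\bar x\|\le\kappa\cdot 0=0$. Since every accumulation point of $\{x_k\}$ is stationary by Theorem~\ref{The01}(iii), $\bar x$ is an isolated accumulation point, and Theorem~\ref{The01}(iv) yields $x_k\to\bar x$; in particular $\{x_k\}$ is bounded, so Claim~1 in the proof of Theorem~\ref{The01} gives $\tau_k\ge\underline\tau>0$. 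Writing $r_k:=\varphi(x_k)-\varphi(\bar x)\ge0$ (with $\varphi(\bar x)=\inf_k\varphi(x_k)$), the Armijo step and Lemma~\ref{lemma1} give the sufficient decrease $r_k-r_{k+1}\ge\sigma\zeta\underline\tau\|d_k\|^2$, while \eqref{LipGrad} together with $\rho_k\le\rho_{\max}$ gives $\|w_k\|\le(l_3+\rho_{\max})\|d_k\|$. I would then produce the matching upper bound $r_k\le r\|x_k-\bar x\|^2$ by applying Lemma~\ref{Lemma:01} with $x=\bar x$, $y=x_k$ and $\bar v:=\nabla g(\bar x)$, which is admissible since stationarity forces the critical-point relation $\nabla g(\bar x)\in\partial h(\bar x)$, so the linear term $\langle\nabla g(\bar x)-\bar v,\,x_k-\bar x\rangle$ vanishes. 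Combining the error bound $\|x_k-\bar x\|\le\kappa\|w_k\|$ with the two previous displays yields $r_k\le r\kappa^2(l_3+\rho_{\max})^2\|d_k\|^2$, hence $r_k-r_{k+1}\ge\theta\,r_k$ with $\theta\in(0,1]$, i.e. the Q-linear decay $r_{k+1}\le(1-\theta)r_k$ of $\{\varphi(x_k)\}$. Finally, since $\|x_k-\bar x\|\le\kappa\|w_k\|\lesssim\|d_k\|\lesssim\sqrt{r_k}$ and $r_k$ decays geometrically, both $\{x_k\}$ and $\{w_k\}$ converge R-linearly in the sense of Definition~\ref{def:rates}.

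\textbf{Superlinear rate: setup and the unit step.} Under the extra hypotheses $\varphi=g-h$ is $\mathcal C^{1,1}$ around $\bar x$, so $w_k=\nabla\varphi(x_k)$ is single-valued, $\nabla g(\bar x)=\nabla h(\bar x)=:\bar u$, and the inclusion in \eqref{EQALG01} becomes $(A_k+\rho_kI)d_k=-w_k$ with $A_k\in\partial^2 g(x_k)$ a symmetric generalized-Hessian element. Passing to $\mathcal C^2$-differentiability points and using $\xi$-lower-definiteness with $\xi>0$, the matrices $A_k$ satisfy $\langle A_kv,v\rangle\ge(\xi-\epsilon)\|v\|^2$ near $\bar x$, giving uniform invertibility and the comparabilities $\|d_k\|\asymp\|w_k\|\asymp\|x_k-\bar x\|$; moreover $\zeta\le\xi$ and $\rho_k\ge0$ ensure $\langle w_k,d_k\rangle\le-\xi\|d_k\|^2\le-\zeta\|d_k\|^2$ even as $\rho_k\to0$. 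For acceptance of the full step I would use the integral form of $\varphi(x_k+d_k)-\varphi(x_k)-\langle w_k,d_k\rangle$, expand $\nabla g(x_k+td_k)-\nabla g(x_k)=tA_kd_k+o(\|d_k\|)$ by semismoothness of $\nabla g$, and render the $h$-contribution $o(\|d_k\|^2)$ via $\clm\nabla h(\bar x)=0$, obtaining $\varphi(x_k+d_k)-\varphi(x_k)=\tfrac12\langle w_k,d_k\rangle-\tfrac{\rho_k}{2}\|d_k\|^2+o(\|d_k\|^2)$. Since $\sigma<\tfrac12$ and $\langle w_k,d_k\rangle\le-\zeta\|d_k\|^2<0$, the residual $(\tfrac12-\sigma)\langle w_k,d_k\rangle+o(\|d_k\|^2)$ is nonpositive for all large $k$, so the Armijo test passes at $\tau=1$ (with $\bar\tau_k=t_{\min}=1$) and $\tau_k\equiv1$ eventually.

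\textbf{Superlinear estimate.} With unit steps, $x_{k+1}-\bar x=(x_k-\bar x)+d_k$. Multiplying by $A_k$ and substituting $A_kd_k=-w_k-\rho_kd_k=-(\nabla g(x_k)-\bar u)+(\nabla h(x_k)-\bar u)-\rho_kd_k$, I would invoke the semismooth residual $\nabla g(x_k)-\bar u-A_k(x_k-\bar x)=o(\|x_k-\bar x\|)$, the calmness estimate $\nabla h(x_k)-\bar u=o(\|x_k-\bar x\|)$ from $\clm\nabla h(\bar x)=0$, and $\rho_k\|d_k\|=o(\|x_k-\bar x\|)$ from $\rho_k\to0$, to arrive at $A_k(x_{k+1}-\bar x)=o(\|x_k-\bar x\|)$. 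Inverting $A_k$ gives $\|x_{k+1}-\bar x\|=o(\|x_k-\bar x\|)$, which is Q-superlinear convergence of $\{x_k\}$; the comparabilities $\|w_k\|\asymp\|x_k-\bar x\|$ and $r_k\asymp\|x_k-\bar x\|^2$ then transfer the rate to $\{w_k\}$ and $\{\varphi(x_k)\}$.

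\textbf{Main obstacle.} The delicate point is the superlinear estimate for the \emph{coderivative} inclusion: unlike classical semismooth Newton, the element $A_k$ with $A_kd_k\in\partial^2 g(x_k)(d_k)$ is direction-dependent, so I must (i) justify that $\xi$-lower-definiteness of the Mordukhovich second-order subdifferential transfers to uniform positive definiteness of the associated Clarke generalized-Hessian matrices near $\bar x$ (via $\mathcal C^2$-points and upper semicontinuity of $\partial^2 g$), and (ii) reconcile the mismatch between the step direction $d_k$ and the error direction $x_k-\bar x$ by simultaneously controlling the semismoothness residual, the vanishing regularization $\rho_k\to0$, and the second-order negligibility of $h$ encoded by $\clm\nabla h(\bar x)=0$, so that the second-order information of $g$ alone drives the fast local phase. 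A secondary subtlety is guaranteeing that the genuine unit step (rather than merely a step bounded away from $0$) is accepted, which is where the precise constants $\sigma<\tfrac12$ and $t_{\min}=1$ enter.
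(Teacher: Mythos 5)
Your treatment of the linear rates is essentially the paper's own argument: isolatedness of $\bar x$ from strong metric subregularity, the error bound $\|x_k-\bar x\|\le\kappa\|w_k\|$, the bound $\|w_k\|\le\ell\|d_k\|$ coming from \eqref{LipGrad}, the quadratic upper estimate $\varphi(x_k)-\varphi(\bar x)\le r\|x_k-\bar x\|^2$ from Lemma~\ref{Lemma:01}, and the Armijo sufficient decrease. The paper closes the loop by invoking an auxiliary lemma from the literature, whereas you chain the inequalities by hand; that difference is cosmetic and your version is correct.

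The superlinear part, however, contains a genuine gap, and it is precisely the point you yourself flag as the ``main obstacle'' without resolving it. You rewrite the Newton inclusion as $(A_k+\rho_k I)d_k=-w_k$ for a single symmetric matrix $A_k\in\partial^2 g(x_k)$ and later invert $A_k$. For a $\mathcal{C}^{1,1}$ function $g$, the generalized Hessian $\partial^2 g(x_k)(\cdot)=D^*\nabla g(x_k)(\cdot)=\partial\langle\cdot,\nabla g\rangle(x_k)$ is only a positively homogeneous set-valued map: the element $-w_k-\rho_k d_k$ of $\partial^2 g(x_k)(d_k)$ selected in Step~3 need not be representable as $A_kd_k$ for any matrix that simultaneously acts correctly on the error direction $x_k-\bar x$, and passing to Clarke generalized-Hessian matrices through $\mathcal{C}^2$-points both changes the object and still fails to align the two directions. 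The paper's proof avoids matrices and inversion entirely: using the scalarization \eqref{coder:sub} and the subdifferential sum rule it splits $\partial^2 g(x_k)(d_k)\subseteq\partial^2 g(x_k)(x_k+d_k-\bar x)+\partial^2 g(x_k)(\bar x-x_k)$, which yields some $z_k\in\partial^2 g(x_k)(\bar x-x_k)+\rho_k(\bar x-x_k)$ with $-\nabla g(x_k)+\nabla h(x_k)-z_k\in\partial^2 g(x_k)(x_k+d_k-\bar x)+\rho_k(x_k+d_k-\bar x)$; then the $(\xi+\rho_k)$-lower-definiteness together with the Cauchy--Schwarz inequality gives directly $\|x_k+d_k-\bar x\|\le(\xi+\rho_k)^{-1}\|\nabla g(x_k)-\nabla h(x_k)+z_k\|$, and semismoothness of $\nabla g$ at $\bar x$, the condition $\clm\nabla h(\bar x)=0$, and $\rho_k\to 0$ make the right-hand side $o(\|x_k-\bar x\|)$. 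Acceptance of the unit step is then deduced from this estimate via the standard criterion in Facchinei--Pang (Proposition~8.3.18), not from a segment-wise Taylor expansion with a fixed $A_k$ --- which semismoothness at the single point $\bar x$ does not license. To repair your argument you would need to replace the matrix representation and the inversion step by this sum-rule direction-splitting (or an equivalent device).
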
\vspace*{-0.05in}
\begin{proof} We split the proof of the theorem into the following two claims.\vspace*{0.03in}

\noindent\textbf{Claim~1:} \emph{The rate of convergence of
$\{\varphi(x_k)\}$ is at least Q-linear, while both sequences $\{x_k\}$ and $\{w_k\}$ converge at least R-linearly.}\\
Observe first that it follows from the imposed strong metric subregularity of $\partial\ph$ that $\bar{x}$ is an isolated accumulation point, and so $x_k\to\bar{x}$ as $k\to\infty$ by Theorem~\ref{The01}(iii). Further, we get from \eqref{def:stron_subreg} that there exists $\kappa>0$ such that
\begin{align}\label{eq:1}
\| x_k - \bar{x}\| \leq \kappa \| w_k\|\;\text{ for large }\;k \in \N,
\end{align}
since $w_k\to 0$ as $k\to\infty$ by Theorem \ref{The01}(ii). Using \eqref{LipGrad} and the triangle inequality gives us $\ell > 0$ such that $ \| w_k\| \leq \ell \| d_k\| $ for sufficiently large $k\in\N$. Lemma~\ref{Lemma:01} yields then the cost function increment estimate
\begin{align}\label{eq:2}
\varphi(x_k)-\varphi(\bar{x}) \le r\| x_k -\bar x\|^2\;\text{ for all large }\;k \in\N.
\end{align}
By Step~5 of Algorithm~\ref{alg:1} and Lemma~\ref{lemma1}, we get that  $\varphi(x_k)-\varphi(x_{k+1}) \geq  \sigma \zeta\tau_k \| d_k \|^2 $ for large $k \in \N$. Remembering that $\inf_{k\in \N} \tau_k >0$, we deduce from Theorem \ref{The01}(ii) the existence of $\eta >0$ such that
\begin{align}\label{eq:3}
\varphi(x_k)-\varphi(\bar{x}) - (\varphi(x_{k+1})-	\varphi(\bar{x}) ) \geq \eta  \| w_k\|^2
\end{align}
whenever $k$ large enough. Therefore, applying \cite[Lemma~7.2]{2021arXiv210902093D} to the sequences $\alpha_k := \varphi(x_k) - \varphi(\bar{x})$, $\beta_k:= \|  w_k\|$, and $\gamma_k := \| x_k - \bar{x}\|$ with the positive constants $c_1:= \eta$, $c_2:= \kappa^{-1}$, and $c_3:= r$, we verify the claimed result.\vspace*{0.03in}

\noindent\textbf{Claim ~2: } \emph{Assuming that $\sigma \in (0,\frac{1}{2})$, $t_{\min}=1$, $g$ is semismoothly differentiable at $\bar{x}$, $h$ is of class $\mathcal{C}^{1,1}$ around $\bar{x}$, and $\clm \nabla h(\bar{x})=0$, we have that  the rate of convergence for all the above sequences is at least Q-superlinear.}\\
Suppose without loss of  generality that $h$ is differentiable at any $x_k\to\ox$. It follows from the coderivative scalarization \eqref{coder:sub} and the basic subdifferential sum rule in \cite[Theorem~2.19]{MR3823783} valid under the imposed assumptions that
\begin{align}\label{sublineal}
\partial^2 g(x_k)(d_k) \subseteq \partial^2 g(x_k) (x_k + d_k - \bar{x})  + \partial^2 g(x_k) (-x_k +  \bar{x}).
\end{align}
This yields the existence of $z_k \in \partial^2 g(x_k) (-x_k +  \bar{x})+\rho_k(-x_k+\bar{x})$ such that 
\begin{align}\label{inclusion01}
-\nabla g(x_k) +\nabla h(x_k) - z_k \in  \partial^2 g(x_k) (x_k + d_k - \bar{x})+\rho_k(x_k + d_k - \bar{x}).
\end{align}
Moreover, the  $(\xi+\rho_k)$-lower-definiteness of $\partial^2 g(x_k )+\rho_kI$ and the Cauchy--Schwarz inequality imply that
 \begin{align*}
\|  x_k + d_k -\bar{x}\|\leq \frac{1}{\xi+\rho_k} \|  \nabla g(x_k) - \nabla h(x_k)+ z_k\|.
\end{align*}
Combining now the semismoothness of $\nabla g$ at $\bar{x}$ with the conditions
$\nabla g(\bar{x})=\nabla h(\bar{x})$ and $\clm \nabla h(\bar{x})=0$ brings us to the estimates
\begin{align*}
\begin{array}{ll}
\| \nabla g(x_k) - \nabla h(x_k)+ z_k \|\leq  \| \nabla g(x_k) -\nabla g(\bar{x})+ z_k+\rho_k(x_k-\bar{x})\|\\
+\rho_k\|x_k-\bar{x}\| + \|\nabla h(\bar{x})-\nabla h(x_k)\|= o(\| x_k - \bar{x}\|).
\end{array}
\end{align*}
Then we have $\|  x_k + d_k -\bar{x}\|=o(\|x_k-\bar{x}\|)$ and deduce therefore from \cite[Proposition~8.3.18]{MR1955649} and Lemma~\ref{lemma1}(i) that
\begin{align}\label{eqSLC01}
\varphi(x_k + d_k) \leq \varphi(x_k) + \sigma \langle \nabla \varphi(x_k), d_k\rangle.
\end{align}
It follows from \eqref{eqSLC01} that $x_{k+1}=x_k + d_k$ if $k$ for large $k$. Applying \cite[Proposition~8.3.14]{MR1955649} yields the $Q$-superlinear convergence of $\{x_k\}$ to $\bar{x}$ as $k\to\infty$.

Finally, conditions \eqref{eq:1}--\eqref{eq:3} and the Lipschitz continuity of $\nabla \varphi$ around~$\bar{x}$ ensure the existence of $L>0$ such that
\begin{align*}
\frac{\eta}{\kappa^2}\| x_k - \bar{x} \|^2 &  \le\varphi(x_k) -\varphi(\bar{x}) \leq  r \| x_k - \bar{x} \|^2, \\
\quad\frac{1}{\kappa}\| x_k - \bar{x} \| & \leq   \| \nabla  \varphi(x_k)  \|\leq L\| x_k - \bar{x} \|
\end{align*}
for sufficiently large $k$, and therefore we get the estimates
\begin{equation}\label{estimationsCOR}
\begin{array}{ll}
\disp\frac{\varphi(x_{k+1}) -\varphi(\bar{x})  }{\varphi(x_k) -\varphi(\bar{x})} \leq \kappa r\disp\frac{ \| x_{k+1} - \bar{x} \|^2 }{ \| x_{k} -\bar{x}  \|^2},
\\
\quad\;\disp\frac{\| \nabla \varphi(x_{k+1}) \| }{ \| \nabla \varphi(x_{k})\|}\le\kappa L\disp\frac{ \| x_{k+1} - \bar{x} \| }{ \| x_{k} -\bar{x}\|},
\end{array}
\end{equation}
which thus conclude the proof of the theorem.
\end{proof}\vspace*{-0.25in}

\begin{remark}\label{rem:subregul}
The property of {\em strong metric subregularity} of {\em subgradient mappings}, which is a central assumption of Theorem~\ref{corSMR}, has been well investigated in variational analysis, characterized via second-order growth and coderivative type conditions, and applied to optimization-related problems; see, e.g., \cite{ag,dmn,MR3823783} and the references therein.
\end{remark}\vspace*{-0.05in}

The next theorem establishes the $Q$-superlinear and $Q$-quadratic convergence of the sequences generated by Algorithm~\ref{alg:1} provided that: $\xi>0$ (i.e., $\partial^2 g(x)$ is $\xi$-strongly positive-definite), $\rho_k=0$ for all $k\in \N$ (no regularization is used), $g$ is semismoothly differentiable at the cluster point $\bar{x}$, and the function $h$ can be expressed as the pointwise maximum of finitely many affine functions at $\bar{x}$, i.e., when there exist $(x^\ast_i, \alpha_i)_{i=1}^p \subseteq \mathbb{R}^n \times \R$ and $\epsilon >0$ such that 
\begin{align}\label{max_affine}
h(x)=\max_{i=1,\ldots, p} \left\{ \langle x^\ast_i,x\rangle +\alpha_i \right\}\; \text{ for all }\;x\in \mathbb{B}_\epsilon(\bar{x}).
\end{align}

\begin{theorem}\label{Cor:max_affine} In addition to the assumptions of Theorem~{\rm\ref{The01}}, suppose that $\xi>0$, $0<\zeta\leq\xi$, $\sigma \in (0,\frac{1}{2})$, $t_{\min}=1$, and $\rho_k=0$ for all $k\in\N$. Suppose also that the sequence $\{x_k\}$ generated by Algorithm~{\rm\ref{alg:1}} has an accumulation point $ \bar{x}$ at which $g$ is semismoothly differentiable and $h$ can be represented in form \eqref{max_affine}. Then we have the convergence $x_k\to\bar{x}$, $\varphi(x_k)\to\varphi(\bar{x})$, $w_k\to 0$, and $\nabla g(x_k)\to\nabla g(\bar{x})$ as $k\to\infty$ with at least $Q$-superlinear rate.  If in addition $g$ is of class $\mathcal{C}^{2,1}$ around $\ox$, then the rate of convergence is at least quadratic.
\end{theorem}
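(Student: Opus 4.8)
The plan is to reduce the iteration, near the accumulation point $\bar{x}$, to the \emph{smooth} difference program $g-\ell_{i_0}$ for a single affine piece $\ell_{i_0}$, and then to invoke Theorem~\ref{corSMR} for that reduced problem. Since $h(x)=\max_{i}\{\langle x^\ast_i,x\rangle+\alpha_i\}$ on $\mathbb{B}_\epsilon(\bar{x})$, we have locally $\varphi=\min_{i=1,\dots,p}\tilde g_i$, where $\tilde g_i:=g-\ell_i$ with $\ell_i(x):=\langle x^\ast_i,x\rangle+\alpha_i$. Each $\tilde g_i$ is of class $\mathcal{C}^{1,1}$, is semismoothly differentiable at $\bar{x}$ (because $g$ is), and satisfies $\partial^2\tilde g_i(x)=\partial^2 g(x)$, so $\partial^2\tilde g_i(\bar{x})$ is $\xi$-lower-definite with $\xi>0$. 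By the scalarization \eqref{coder:sub} and the coderivative criterion for strong metric subregularity (exactly as in the case $h=0$ treated in \cite{2021arXiv210902093D}), this forces each $\nabla\tilde g_i$ to be strongly metrically subregular at any of its critical points. Moreover, the stationarity $0\in\partial\varphi(\bar{x})=\nabla g(\bar{x})+\partial(-h)(\bar{x})$ combined with the limiting-gradient description of $\partial(-h)$ (see Remark~\ref{asp-rem}) produces a \emph{unique} index $i_0$ active at $\bar{x}$ with $x^\ast_{i_0}=\nabla g(\bar{x})$, i.e. $\nabla\tilde g_{i_0}(\bar{x})=0$.

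First I would prove whole-sequence convergence $x_k\to\bar{x}$. By Theorem~\ref{The01}(iii) every accumulation point is stationary, and any stationary point $\hat x$ close to $\bar{x}$ is a critical point of some $\tilde g_i$ with $i$ active. As there are finitely many pieces and each $\nabla\tilde g_i$ is strongly metrically subregular at its critical point, the stationary points of $\varphi$ are isolated near $\bar{x}$; hence $\bar{x}$ is an \emph{isolated} accumulation point, and Theorem~\ref{The01}(iv) yields $x_k\to\bar{x}$, while Theorem~\ref{The01}(ii) gives $w_k\to 0$ and $\nabla g(x_k)\to\nabla g(\bar{x})$.

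Next I would show that the iterates asymptotically commit to the piece $i_0$. Writing $w_k=\nabla g(x_k)+v_k$ with $v_k\in\partial(-h)(x_k)$, the limiting-gradient representation gives $v_k=-x^\ast_{j_k}$ for some index $j_k$ active at $x_k$; by continuity $j_k$ is active at $\bar{x}$ for large $k$. Since only finitely many gradients $x^\ast_i$ occur and $i_0$ is the only index active at $\bar{x}$ with $x^\ast_{i_0}=\nabla g(\bar{x})$, the convergences $w_k\to 0$ and $\nabla g(x_k)\to\nabla g(\bar{x})$ force $j_k=i_0$ for all large $k$, whence $w_k=\nabla\tilde g_{i_0}(x_k)$. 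Using $\rho_k=0$, the step \eqref{EQALG01} then becomes $-\nabla\tilde g_{i_0}(x_k)\in\partial^2\tilde g_{i_0}(x_k)(d_k)$, so the \emph{tail} of Algorithm~\ref{alg:1} run on $\varphi$ coincides with Algorithm~\ref{alg:1} run on the difference program $\tilde g_{i_0}=g-\ell_{i_0}$, with $\bar{x}$ as its accumulation point.

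Finally, I would verify that $\tilde g_{i_0}$ satisfies all hypotheses of Theorem~\ref{corSMR}: $\partial\tilde g_{i_0}=\nabla\tilde g_{i_0}$ is strongly metrically subregular at $(\bar{x},0)$ (from $\xi>0$, as above); $g$ is semismoothly differentiable at $\bar{x}$; the subtracted function $\ell_{i_0}$ is affine, hence of class $\mathcal{C}^{1,1}$ with $\clm\nabla\ell_{i_0}(\bar{x})=0$; and the remaining parameters $0<\zeta\le\xi$, $\sigma\in(0,\tfrac12)$, $t_{\min}=1$, $\rho_k=0\to0$ match the hypotheses of Theorem~\ref{corSMR}. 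Applying that theorem to $\tilde g_{i_0}$ therefore yields the Q-linear rate for $\{\varphi(x_k)\}$, the R-linear rates for $\{x_k\}$ and $\{w_k\}$, and the at least Q-superlinear rate for all four sequences (with the acceptance $\tau_k=1$ of the unit step for large $k$, cf. \cite[Proposition~8.3.18]{MR1955649}); when in addition $g\in\mathcal{C}^{2,1}$ around $\bar{x}$ the rate sharpens to at least quadratic. Transferring these conclusions back to $\varphi$ is immediate because the two iterations coincide on the tail. The main obstacle is the third step: controlling $h$ at a \emph{kink} $\bar{x}$, where several affine pieces are simultaneously active, and proving rigorously that the iterates nonetheless settle onto the single smooth branch $\tilde g_{i_0}$, which is precisely what makes the smooth semismooth-Newton theory of Theorem~\ref{corSMR} applicable.
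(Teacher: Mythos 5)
Your argument follows essentially the same route as the paper's proof: isolate $\bar{x}$ by combining the finiteness of the gradients $x^*_i$ with the strong local monotonicity of $\nabla g$ coming from $\xi>0$; identify the single affine piece $\ell_{i_0}$ with $x^*_{i_0}=\nabla g(\bar{x})$ so that $w_k=\nabla(g-\ell_{i_0})(x_k)$ for all large $k$; and then run the smooth semismooth-Newton analysis on the auxiliary function $g-\ell_{i_0}$. The paper inlines the estimates from the proof of Theorem~\ref{corSMR} rather than citing that theorem, but the content is the same, and your observation that $\ell_{i_0}$ is affine with $\clm\nabla\ell_{i_0}(\bar{x})=0$ does make the black-box invocation legitimate for the superlinear part. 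Two caveats. First, the tail of the $\varphi$-iteration does not literally \emph{coincide} with the iteration for $\tilde g_{i_0}:=g-\ell_{i_0}$: the line search tests $\varphi$, and only the implication ``unit step accepted for $\tilde g_{i_0}$ $\Rightarrow$ unit step accepted for $\varphi$'' is available, since $\varphi\le\tilde g_{i_0}$ near $\bar{x}$ with equality at $x_k$ (the converse can fail). Fortunately this is exactly the direction needed to get $\tau_k=1$ and transfer the superlinear step estimate, but you should state it that way rather than as an identity of the two algorithms. Second, Theorem~\ref{corSMR} as stated concludes only Q-superlinear convergence, so the quadratic rate under $g\in\mathcal{C}^{2,1}$ cannot be obtained by ``applying that theorem''; one still needs the separate classical-Newton quadratic-convergence argument for $\tilde g_{i_0}$ once the unit step is accepted, which is precisely what the paper's final claim supplies.
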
\vspace*{-0.05in}
\begin{proof}
Observe that by \eqref{max_affine} and \cite[Proposition~1.113]{MR2191744} we have the inclusion
\begin{align}\label{FORMSUBD}
\partial (-h)(x)\subseteq \bigcup\big\{  -x^\ast_i\;\big|\;h(x)=\langle x^\ast_i,x\rangle +\alpha_i\big\}
\end{align}
for all $x$ near $\ox$. The rest of the proof is split into the five claims below.\vspace*{0.03in}

\noindent\textbf{Claim~1:} \emph{The sequence $\{x_k\}$ converges to $\bar{x}$ as $k\to\infty$.}\\
Observe that $\bar{x}$ is an isolated accumulation point. Indeed, suppose on the contrary that there is a sequence $\{y_\nu\}$ of accumulation points of $\{x_k\}$ such that $y_\nu \to \bar{x}$ as $\nu\to\infty$ with $y_\nu \neq \bar{x}$ for all $\nu\in\N$. Since each $y_\nu$ is accumulation point of $\{x_k\}$, they are stationary points of $\varphi$. The ${\cal C}^1$-smoothness of $g$ ensures that $\nabla g(y_\nu) \to \nabla g(\bar{x})$ as $\nu\to\infty$, and so \eqref{FORMSUBD} yields
$\nabla g(y_\nu)=x_{i_\nu}^\ast$ for large $\nu\in\N$. Since there are finitely many of $x_i^\ast$ in \eqref{max_affine}, we get that $\nabla g(y_\nu) = \nabla g(\bar{x})$ when $\nu$ is sufficiently large. Further, it follows from \cite[Theorem~5.16]{MR3823783} that the gradient mapping $\nabla g$ is strongly locally maximal monotone around  $\bar{x}$, i.e., there exist positive numbers $\epsilon$ and $r$ such that
\begin{align*}
\langle \nabla g(x)-\nabla g(y), x-y\geq r\| x -y\|^2\;\text{ for all } \;x,y\in\mathbb{B}_\epsilon(\bar{x}).
\end{align*}
Putting $x:=\bar{x}$ and $y:=y_\nu$ in the above inequality tells us that
$\bar{x}= y_\nu$ for large $\nu \in \N$, which is a contradiction. Applying  finally Theorem~\ref{The01}(iv), we complete the proof of this claim.\vspace*{0.03in}

\noindent\textbf{Claim 2:}  \emph{The sequence $\{x_k\}$ converges to $\bar{x}$ as $k\to\infty$ at least $Q$-superlinearly.}\\
As $x_k\to\bar{x}$, we have by Theorem~\ref{The01}(ii) that $w_k-\nabla g(x_k)\to-\nabla g(\bar{x})$, and so it follows from \eqref{FORMSUBD} that
there exists $i\in\{1,\ldots,p\}$ such that $h(\bar x ) = \langle x^\ast_i , \bar x\rangle +\alpha_i$, $h(x_k) = \langle x^\ast_i , x_k\rangle -\alpha_i$ and
$w_k-\nabla g(x_k)=-\nabla g(\bar{x})=-x_i^\ast$ for all $k$ sufficiently large. Define the auxiliary function $\hat{\varphi}:\mathbb{R}^n \to \Rex$ by 
\begin{align}\label{aux:func}
\hat{\varphi} (x):= g(x) -\langle x^\ast_i , x\rangle -\alpha_i
\end{align}
and observe that $\hat\varphi$ is $\mathcal{C}^{1,1}$ around $\ox$ and semismoothly differentiable at this point. We have the equalities
\begin{align}\label{eq_auxvarphi}
\varphi (x_k) =\hat{\varphi} (x_k),\;\varphi (\bar{x}) =\hat{\varphi} (\bar{x}),\; \nabla \hat{\varphi} (x_k)=w_k,\;\text{ and }\; \nabla \hat{\varphi} (\bar{x})=0
\end{align} 
for large $k$. It follows from $\partial^2\hat{\varphi} (x) = \partial^2  g(x)$ that the mapping $\partial^2\hat{\varphi}(\bar{x})+\rho_kI$ is $(\xi+\rho_k)$-lower-definite. Using \eqref{sublineal} and \eqref{inclusion01} with the replacement of $g$ by $\hat\varphi$ and taking \eqref{eq_auxvarphi} into account ensures the existence of $z_k \in \partial^2 \hat{\varphi}(x_k) (-x_k +  \bar{x}) +\rho_k(-x_k +  \bar{x})$ satisfying the estimate 
 \begin{align*}
 \| x_k + d_k -\bar{x}\| \leq \frac{1}{\xi+\rho_k} \|  \nabla \hat{\varphi}(x_k)- \nabla\hat{\varphi}(\bar{x})+ z_k\|.
 \end{align*}
 The triangle inequality and the semismoothness of $\nabla\hat \varphi$ at $\bar{x}$ yield
\begin{align*}
\|\nabla \hat{\varphi}(x_k)- \nabla \hat{\varphi}(\bar{x})+ z_k\|&\leq \| \nabla \hat{\varphi}(x_k)- \nabla \hat{\varphi}(\bar{x})+ z_k+\rho_k(x_k-\bar{x})\|+\rho_k\|x_k-\bar{x}\|\\
&=o(\|x_k-\bar{x}\|),
\end{align*}
which tells us that $\| x_k + d_k -\bar{x}\|=o(\| x_k - \bar{x}\|)$. Then it follows from\cite[Proposition~8.3.18]{MR1955649} and Lemma~\ref{lemma1}(i) above that
\begin{align}\label{eqSLC}
\hat{\varphi}(x_k + d_k) \leq \hat{\varphi}(x_k) + \sigma \langle \nabla\hat{\varphi}(x_k),d_k\rangle
\end{align}
whenever $k$ is sufficiently large. Applying finally \cite[Proposition~8.3.14]{MR1955649} verifies the claimed $Q$-superlinear convergence of $\{x_k\}$ to $\bar{x}$.\vspace*{0.03in}

\noindent\textbf{Claim~3:} \emph{The gradient mapping of  $\hat \varphi$ from \eqref{aux:func} is strongly metrically regular around $(\bar{x},0)$ and hence strongly metrically subregular at this point.}\\
Using the $\xi$-lower-definiteness of $\partial^2 \hat\varphi (\bar{x})$ and the pointbased coderivative characterization of strong local maximal monotonicity given in \cite[Theorem~5.16]{MR3823783}, we verify this property for $\nabla \hat \varphi$
around $\bar{x}$. Then \cite[Corollary~5.15]{MR3823783} ensures that $\nabla \hat \varphi$ is strongly metrically regular around $(\bar{x},0)$. \vspace*{0.03in}

\noindent\textbf{Claim~4:}  \emph{The sequences $\{ \varphi(x_k)\}$, $\{w_k\}$, and $\{\nabla g(x_k)\}$ converge at least Q-superlinearly to $\varphi(\bar{x})$, $0$, and $\nabla g(\bar{x})$, respectively.}\\
It follows from the estimates in \eqref{estimationsCOR}, with the replacement of $\ph$ by $\hat\varphi$ and with taking into account that
$\hat\varphi(x_k) -  \hat\varphi(\bar x) =\varphi(x_k)-\varphi(\bar x)$ and $ \nabla \hat \varphi(x_k) =  w_k$ due to \eqref{eq_auxvarphi}, that there exist constants $\alpha_1 , \alpha_2 >0$ such that
\begin{align*}
\frac{\varphi(x_{k+1}) -\varphi(\bar{x})  }{ \varphi(x_k) -\varphi(\bar{x})} &\leq \alpha_1 \frac{ \| x_{k+1} - \bar{x} \|^2 }{ \| x_{k} -\bar{x}  \|^2} \\
\frac{ \|w_{k+1 }\|  }{ \|w_k\|} &\leq  \alpha_2 \frac{ \| x_{k+1} - \bar{x} \| }{ \| x_{k} - \bar{x}  \|}
\end{align*}
provided that $k$ is sufficiently large. Recalling that $w_k-\nabla g(x_k)=-\nabla g(\bar{x})$ for large $k$ completes the proof of the claim. \vspace*{0.03in}

\noindent\textbf{Claim~5:} \emph{If $g$ is of class $\mathcal{C}^{2,1}$ around $\ox$, then the rate of convergence of the sequences above is at least quadratic.}\\
 It is easy to see that the assumed $\mathcal{C}^{2,1}$ property of $g$ yields this property of $\hat{\varphi}$ around $\ox$. Using estimate 
 \eqref{eqSLC}, we deduce this claim from the quadratic convergence of the classical Newton method; see, e.g., \cite[Theorem~5.18]{Aragon2019} and \cite[Theorem~2.15]{MR3289054}. This therefore completes the proof of the theorem.
\end{proof}\vspace*{-0.25in}

\begin{remark}\label{rem:long} Concerning Theorem~\ref{Cor:max_affine}, observe the following:

(i) It is important to emphasize that the performance of Algorithm~\ref{alg:1} revealed in Theorem~\ref{Cor:max_affine} is mainly due to the usage of the basic subdifferential of the function $-h$ in contrast to that of $h$, which is calculated as
\begin{equation}\label{h-sub}
\partial h(x)= \text{co} \left(\bigcup \left\{  x^\ast_i\;\bigg|\;h(x)=\langle x^\ast_i,x\rangle +\alpha_i \right\}\right)
\end{equation}
by \cite[Theorem~3.46]{MR2191744}. We can see from the proof of Theorem~\ref{Cor:max_affine} that it fails if the evaluation of $\partial(-h)(x)$ in \eqref{FORMSUBD} is replaced by the one of $\partial h(x)$ in \eqref{h-sub}.

(ii) The main assumptions of Theorem~\ref{Cor:max_affine} do not imply the smoothness of $\varphi$  at stationary points. For instance,  consider the nonconvex function   $\varphi: \mathbb{R}^n \to \R$ defined as in Example~\ref{example3.10} but letting now $h_i(x_i):= |x_i| +| 1- x_i|$.
 The  function $\varphi$ satisfies the assumptions of Theorem~\ref{Cor:max_affine} at any of its  stationary points $\{-2,0,2\}^n$, but $\varphi$ is not differentiable at $\bar{x}=0$; see Figure~\ref{example3.10reviplot2}.
	
 \begin{figure}[h!!]
\centering	\begin{tikzpicture}
\begin{axis}[
axis x line=center,
axis y line=center,
xtick={-5,-4,...,5},
ytick={-3,-2,...,2},
xlabel={$x$},
ylabel={$y$},
xlabel style={below right},
ylabel style={above left},
xmin = -6.5, xmax = 6.5,
ymin = -3.5, ymax = 2.5]
\addplot[
domain = -5.5:5.5,
samples = 200,
smooth,
ultra thick,
color=C0,
] {0.5*x*x  - abs(x)- abs(  1-x)};
\node at (0,-1)[circle,fill=red,inner sep=1.5pt]{};
\end{axis}
\end{tikzpicture}
\caption{ Plot of function  $\varphi_i(x)=\frac{1}{2}x^2 -|x| -| 1- x|  $ in Remark~\ref{rem:long}}
\label{example3.10reviplot2}
\end{figure}
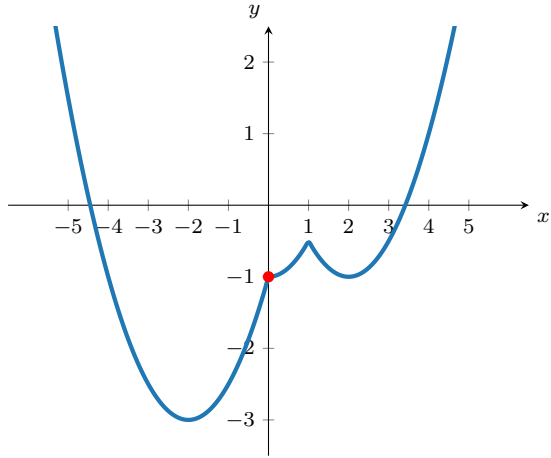
(iii) The functions $\varphi$, $g$, and $h$ in Example~\ref{example3.10} satisfy the assumptions of Theorem~\ref{Cor:max_affine}. Therefore, the convergence of the sequences generated by Algorithm~\ref{alg:1} is at least quadratic.
\end{remark}\vspace*{-0.33in}

\section{Convergence Rates under the Kurdyka--{\L}ojasiewicz Property}\label{sec:4}\vspace*{-0.1in} 

In this section, we verify the global convergence of Algorithm~\ref{alg:1} and establish convergence rates in the general setting of Theorem~\ref{The01} without additional assumptions of Theorems~\ref{corSMR} and \ref{Cor:max_affine} while supposing instead that the cost function $\ph$ satisfies the Kurdyka--{\L}ojasiewicz property. Recall that the \emph{Kurdyka--{\L}ojasiewicz property} holds for $\ph$ at $ \bar{x}$ if there exist $\eta >0$ and a continuous concave function $ \psi:[0,\eta] \to [0,\infty)$ with $\psi (0)=0$  such that $\psi$ is $\mathcal{C}^1$-smooth on $(0,\eta)$ with the strictly positive derivative $\psi'$ and that
\begin{align}\label{Kur-Loj}
\psi'\big(\varphi(x) - \varphi(\bar{x})\big)\,{\rm dist}\big(0;\partial \varphi(x)\big)\geq 1
\end{align}
for all $x\in \mathbb{B}_\eta(\bar{x})$ with $\varphi(\bar{x}) < \varphi(x) <\varphi( \bar{x} ) + \eta$, where ${\rm dist}(\cdot;\Omega)$ stands for the distance function of a set $\Omega$.

The first theorem of this section establishes the {\em global convergence} of iterative sequence generated by Algorithm~\ref{alg:1} to a {\em stationary point} of \eqref{EQ01}.\vspace*{-0.05in}  

\begin{theorem}\label{Teo:Kur-Loj}
In addition to the assumptions of Theorem~{\rm\ref{The01}}, suppose that the iterative sequence $\{x_k\}$ generated by Algorithm~{\rm\ref{alg:1}} has an accumulation point $\ox$ at which the Kurdyka--{\L}ojasiewicz property \eqref{Kur-Loj} is satisfied. Then $\{x_k\}$ converges $\ox$ as $k\to\infty$, which is a stationary point of problem~\eqref{EQ01}.
\end{theorem}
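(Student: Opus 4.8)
The plan is to run the standard Łojasiewicz descent argument (in the spirit of Attouch--Bolte--Svaiter), adapting it to the three structural ingredients already isolated in the proof of Theorem~\ref{The01}: a sufficient-decrease inequality, a relative-error bound, and subsequential convergence of both iterates and function values. First I would dispose of the trivial case in which Algorithm~\ref{alg:1} terminates after finitely many steps, since it then returns a stationary point. Otherwise $\{x_k\}$ is infinite, and since $\{\varphi(x_k)\}$ decreases monotonically and converges by Theorem~\ref{The01}(i), while $\varphi(\bar{x})=\inf_k\varphi(x_k)$ by Theorem~\ref{The01}(iii), we have $\varphi(x_k)\downarrow\varphi(\bar{x})$ and in fact $\varphi(x_k)>\varphi(\bar{x})$ for every $k$; indeed, a single equality would force $\varphi(x_{k+1})<\inf_k\varphi(x_k)$ by the strict descent guaranteed in Lemma~\ref{lemma1}(ii), a contradiction.

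Next I would assemble the two quantitative estimates on a fixed neighborhood of $\bar{x}$. From Step~5 of Algorithm~\ref{alg:1} together with Lemma~\ref{lemma1} we obtain the sufficient-decrease inequality $\varphi(x_k)-\varphi(x_{k+1})\geq\sigma\zeta\tau_k\|d_k\|^2$, while \eqref{LipGrad} and the triangle inequality (using $\rho_k\leq\rho_{\max}$ and the Lipschitz continuity of $\nabla g$ on a compact subset of $\Omega$) yield a constant $\ell>0$ with $\|w_k\|\leq\ell\|d_k\|$. Since $w_k\in\partial\varphi(x_k)$, this controls the relative error via $\operatorname{dist}(0;\partial\varphi(x_k))\leq\|w_k\|\leq\ell\|d_k\|$. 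Writing $\|x_{k+1}-x_k\|=\tau_k\|d_k\|$ and setting $\Delta_k:=\psi\big(\varphi(x_k)-\varphi(\bar{x})\big)$, the concavity of $\psi$ gives $\Delta_k-\Delta_{k+1}\geq\psi'\big(\varphi(x_k)-\varphi(\bar{x})\big)\big(\varphi(x_k)-\varphi(x_{k+1})\big)$, and combining this with the KL inequality \eqref{Kur-Loj} in the form $\psi'\big(\varphi(x_k)-\varphi(\bar{x})\big)\geq 1/\|w_k\|$ produces the key estimate $\|x_{k+1}-x_k\|\leq\frac{\ell}{\sigma\zeta}(\Delta_k-\Delta_{k+1})$. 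I would stress that, because both the decrease and the step length are linear in $\tau_k$, this derivation needs \emph{neither} an upper \emph{nor} a lower bound on $\tau_k$; this is precisely where the semi-Newton structure pays off.

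The hard part will be the trapping argument ensuring that the iterates never leave the ball $\mathbb{B}_\eta(\bar{x})$ on which the KL inequality is valid, so that the key estimate can be invoked at every index. I would fix $\rho\in(0,\eta)$ small enough that $\|w_k\|\leq\ell\|d_k\|$ holds throughout $\mathbb{B}_\rho(\bar{x})\cap\Omega$, then pick along the convergent subsequence an index $N$ with $\|x_N-\bar{x}\|+\frac{\ell}{\sigma\zeta}\psi\big(\varphi(x_N)-\varphi(\bar{x})\big)<\rho$, which is possible since $x_{k_j}\to\bar{x}$ and $\Delta_{k_j}\to\psi(0)=0$. An induction on $k\geq N$ then shows simultaneously that $x_k\in\mathbb{B}_\rho(\bar{x})$ and $\sum_{i=N}^{k-1}\|x_{i+1}-x_i\|\leq\frac{\ell}{\sigma\zeta}(\Delta_N-\Delta_k)$: the inductive step telescopes the key estimate, and the bound $\|x_k-\bar{x}\|\leq\|x_N-\bar{x}\|+\sum_{i=N}^{k-1}\|x_{i+1}-x_i\|<\rho$ keeps the iterate inside the ball so that KL applies again. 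Letting $k\to\infty$ yields $\sum_k\|x_{k+1}-x_k\|<\infty$, so $\{x_k\}$ is Cauchy and converges; its limit necessarily coincides with the accumulation point $\bar{x}$, which is a stationary point of \eqref{EQ01} by Theorem~\ref{The01}(iii). This completes the plan.
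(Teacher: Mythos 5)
Your proposal is correct and follows essentially the same route as the paper's proof: the same sufficient-decrease inequality from Step~5 and Lemma~\ref{lemma1}, the same relative-error bound $\|w_k\|\le\ell\|d_k\|$ via \eqref{LipGrad}, the same combination of the KL inequality with concavity of $\psi$ to get $\|x_{k+1}-x_k\|\le\frac{\ell}{\sigma\zeta}(\Delta_k-\Delta_{k+1})$ (with the stepsize $\tau_k$ cancelling, exactly as in the paper), and the same trapping-plus-telescoping argument, which the paper phrases as a contradiction at the first escape index rather than as your direct induction. The only cosmetic omission is that the choice of the index $N$ should also guarantee $\varphi(x_N)<\varphi(\bar x)+\eta$ so that the KL inequality is applicable, which the paper includes explicitly in \eqref{iq:01Ku-Loj} and which follows at once from $\varphi(x_k)\downarrow\varphi(\bar x)$.
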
\vspace*{-0.05in}
\begin{proof}
If Algorithm~\ref{alg:1} stops after a finite number of iterations, there is nothing to prove. Due to the decreasing property of $\{\ph(x_k)\}$ from Theorem~\ref{The01}(i), we can assume that $\varphi(x_k) > \varphi(x_{k+1}) $ for all $k \in \N$. Let $ \bar{x}$ be the accumulation point of $\{x_k\}$ where $\varphi$ satisfies the Kurdyka--{\L}ojasiewicz  inequality \eqref{Kur-Loj}, which by Theorem \ref{The01} is a stationary point of problem~\eqref{EQ01}. Since $\varphi$ is continuous, we have that $\varphi(\bar{x} )= \inf_{k\in\N}\varphi(x_k)$. Taking the constant $\eta>0$ and the function $\psi$ from \eqref{Kur-Loj} and remembering that $g$ is of class $\mathcal{C}^{1,1}$ around $\bar{x}$, suppose without loss of generality that $\nabla g$ is Lipschitz continuous on $\mathbb{B}_{2\eta}( \bar{x})$ with modulus $\kappa$. Let $k_0 \in \N$ be such that $x_{k_0} \in \mathbb{B}_{\eta/2}(\bar{x})$ and that
\begin{align}\label{iq:01Ku-Loj}
\varphi (\bar{x}) < \varphi(x_{k})  < \varphi(\bar{x}) +\eta, \quad\frac{ \kappa +\rho_{\max} }{\sigma \zeta }\psi\big( \varphi (x_{k}-\varphi(\bar{x})\big) < \eta/2
\end{align}
 for all $k \geq k_0$, where $\sigma \in  (0,1)$, $\zeta>0$, and $\rho_{\max}>0$ are the constants of Algorithm~\ref{alg:1}. The rest of the proof is split into the following three steps.\vspace*{0.03in}

\noindent\textbf{Claim~1:} \emph{Let $k \geq k_0 $ be such that     $x_k \in \mathbb{B}_\eta (\bar{x})$. Then we have the estimate}
\begin{align}\label{eq:Kur-Loj}
\| x_k -x_{k+1}\| \leq \frac{ \kappa+\rho_k }{\sigma\zeta }\big( \psi( \varphi(x_{k})  - \varphi(\bar{x})\big) -  \psi\big( \varphi(x_{k+1})  - \varphi(\bar{x})\big)\big).
\end{align}
Indeed, it follows from \eqref{coder:sub}, \eqref{EQALG01}, and  \cite[Theorem~1.22]{MR3823783} that
\begin{equation}\label{dist:inq}
\begin{array}{ll}
{\rm dist}(0;\partial \varphi (x_k)\big) &\leq \| w_k \| \leq \| w_k +\rho_kd_k\|+\rho_k\|d_k\|\\
&\leq (\kappa+\rho_k)  \| d_k \|  = \disp\frac{ \kappa +\rho_k}{ \tau_k } \| x_{k+1}  - x_k\|.
\end{array}
\end{equation}
Then using Step~5 of Algorithm~\ref{alg:1},  Lemma~\ref{lemma1}, the Kurdyka--{\L}ojasiewicz inequality \eqref{Kur-Loj}, the concavity of $\psi$, and  estimate \eqref{dist:inq} gives us 
\begin{align*}
\| x_k - &x_{k+1}\|^2  =  \tau^2_k  \| d_k\|^2 \leq  \frac{     \tau_k  }{\sigma\zeta }\big(  \varphi(x_k ) - \varphi(x_{k+1})\big) \\
& \leq      \frac{     \tau_k  }{\sigma\zeta }{\rm dist}\big(0;\partial \varphi (x_k)\big)\,\psi'\big(\varphi(x_k) - \varphi(\bar{x})\big)  	  \big(  \varphi(x_k ) - \varphi(x_{k+1})\big)\\
&  \leq \frac{     \tau_k  }{\sigma\zeta }{\rm dist}\big(0;\partial \varphi (x_k)\big)\big(\psi( \varphi(x_k) - \varphi(\bar{x})\big)  - \psi(  \varphi(x_{k+1}) - \varphi(\bar{x})\big)\big) \\
&\leq \frac{ \kappa+\rho_k }{\sigma\zeta } \| x_{k+1}  - x_k\|\big(     \psi\big( \varphi(x_k) - \varphi(\bar{x})\big)  - \psi\big(\varphi(x_{k+1}) - \varphi(\bar{x})\big)\big),
 \end{align*}
which therefore verifies the claimed inequality \eqref{eq:Kur-Loj}.\vspace*{0.03in}

\noindent\textbf{Claim~2:} \emph{For every $k \geq k_0 $, we have the inclusion  $x_{k} \in \mathbb{B}_{\eta} (\bar{x})$.}\\
Suppose on the contrary that there exists $k> k_0$ with $x_k \notin \mathbb{B}_\eta (\bar{x})$ and define $\bar{k}:=\min\left\{ k> k_o\;\big|\; x_k \notin  \mathbb{B}_\eta (\bar{x})  \right\}$. Since for $k\in\{k_0,\ldots,\bar{k}-1\}$ the  estimate in \eqref{eq:Kur-Loj} is satisfied, we get by using~\eqref{iq:01Ku-Loj} that
\begin{align*}
\| x_{ \bar{k} } - \bar{x} \|&  \leq \| x_{k_0} -  \bar{x}\|  +  \sum_{k=k_0}^{\bar{k}-1}\| x_{k}  - x_{k+1}\| \\
&\leq  \| x_{k_0} -  \bar{x}\| + \frac{\kappa+\rho_{\max}}{\sigma\zeta } \sum_{k=k_0}^{\bar{k}-1}\big( \psi\big( \varphi(x_{k})  - \varphi(\bar{x})\big) - \psi\big( \varphi(x_{k+1})  - \varphi(\bar{x}) \big)\big)\\
&\leq  \| x_{k_0} -  \bar{x}\| + \frac{\kappa+\rho_{\max}}{\sigma\zeta }   \psi\big( \varphi(x_{k_0})  - \varphi(\bar{x})\big) \leq \eta,
\end{align*}
which contradicts our assumption and  thus verifies this claim.\vspace*{0.03in}

\noindent\textbf{Claim~3:}  \emph{We have that $\sum_{k=1}^{\infty} \| x_k - x_{k+1}\| < \infty$, and consequently the sequence $\{x_k\}$ converges to $\bar{x}$ as $k\to\infty$.}\\
It follows from Claim~1 and Claim~2 that \eqref{eq:Kur-Loj} holds for all $k \geq k_{0}$. Thus
\begin{align*}
\sum_{k=1}^{\infty} \| x_k - x_{k+1}\|& \leq \sum_{k=1}^{k_0-1} \| x_k - x_{k+1}\| + \sum_{k=k_0}^{\infty} \| x_k - x_{k+1}\| \\
& \leq   \sum_{k=1}^{k_0-1} \| x_k - x_{k+1}\| +   \frac{\kappa+\rho_{\max}}{\sigma\zeta } \psi\big( \varphi(x_{k_0})  - \varphi(\bar{x})\big) <\infty,
\end{align*}
which therefore completes the proof of the theorem.
\end{proof}\vspace*{-0.1in}
 
 The next theorem establishes {\em convergence rates} for iterative sequence $\{x_k\}$ in Algorithm~\ref{alg:1} provided that the function $\psi$ in \eqref{Kur-Loj} is selected in a special way. Since the proof while using Theorem~\ref{Teo:Kur-Loj}, is similar to the corresponding one from \cite[Theorem~4.9]{MR4078808} in a different setting, it is omitted. \vspace*{-0.05in}

 \begin{theorem}\label{COR:Kur-Loj}
 In addition to the assumptions of Theorem~{\rm\ref{Teo:Kur-Loj}}, suppose that the Kurdyka--{\L}ojasiewicz property \eqref{Kur-Loj} holds at the accumulation point $\bar{x}$ with $\psi(t):= M t^{1-\theta}$ for some $M>0$ and $\theta\in[0,1)$. The following assertions hold:

 {\bf(i)} If $\theta =0$, then the sequence $\{x_k\}$ converges in a finite number of steps.
  
 {\bf(ii)} If $\theta\in (0,1/2]$,  then the sequence $\{x_k\}$ converges at least linearly.
 
{\bf(iii)} If $\theta \in (1/2,1)$, then there exist $\mu >0$ and $k_0\in \N$ such that
 \begin{align*}
 \|x_ k  - \bar{x}\| \leq \mu k^{-\frac{1-\theta }{ 2\theta -1 } }\;\text{ for all }\;k \geq k_0.
 \end{align*}
\end{theorem}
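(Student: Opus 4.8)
The plan is to reduce everything to a scalar recursion for the function-value gap $r_k:=\varphi(x_k)-\varphi(\bar{x})\ge 0$, which is nonincreasing and tends to $0$ by Theorem~\ref{Teo:Kur-Loj}, and to transfer its decay to the iterates via the tail-sum $\Delta_k:=\sum_{j\ge k}\|x_{j+1}-x_j\|$, which dominates $\|x_k-\bar{x}\|$ since $x_k\to\bar{x}$. As $\{x_k\}$ converges it is bounded, so Theorem~\ref{The01}(ii) furnishes $\gamma:=\inf_k\tau_k>0$, a lower bound on the step sizes that I will use repeatedly. Substituting $\psi(t)=Mt^{1-\theta}$ into the key inequality \eqref{eq:Kur-Loj} and summing from $k$ to $\infty$ yields, with $C:=(\kappa+\rho_{\max})/(\sigma\zeta)$, the transfer bound
\begin{equation*}
\|x_k-\bar{x}\|\le\Delta_k\le C\,\psi(r_k)=CM\,r_k^{1-\theta},
\end{equation*}
so that any decay rate established for $\{r_k\}$ passes immediately to $\{x_k\}$.

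The core step is to derive a closed recursion for $\{r_k\}$. From the Kurdyka--\L{}ojasiewicz inequality \eqref{Kur-Loj} with $\psi'(t)=M(1-\theta)t^{-\theta}$ and the estimate $\operatorname{dist}(0;\partial\varphi(x_k))\le\|w_k\|$, I obtain $r_k^{\theta}\le M(1-\theta)\|w_k\|$. The relative-error bound behind \eqref{dist:inq}, namely $\|w_k\|\le(\kappa+\rho_{\max})\|d_k\|$ from \eqref{LipGrad}, together with the Armijo descent $r_k-r_{k+1}\ge\sigma\zeta\tau_k\|d_k\|^2\ge\sigma\zeta\gamma\|d_k\|^2$ coming from Step~5 and Lemma~\ref{lemma1}, then gives
\begin{equation*}
r_k^{2\theta}\le\bigl(M(1-\theta)(\kappa+\rho_{\max})\bigr)^2\|d_k\|^2\le C'\,(r_k-r_{k+1}),\qquad C':=\frac{\bigl(M(1-\theta)(\kappa+\rho_{\max})\bigr)^2}{\sigma\zeta\gamma},
\end{equation*}
for all large $k$. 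Bounding $\|w_k\|$ through $\|d_k\|$ directly, rather than through $\|x_{k+1}-x_k\|/\tau_k$, lets me use only the lower bound $\tau_k\ge\gamma$ and avoids having to control the step sizes from above.

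With the recursion $r_k^{2\theta}\le C'(r_k-r_{k+1})$ in hand, the three assertions follow from elementary analysis of scalar sequences. For $\theta=0$ it reads $1\le C'(r_k-r_{k+1})$, and summability of $\{r_k-r_{k+1}\}$ forces $r_k=0$ after finitely many steps; since $\{\varphi(x_k)\}$ is strictly decreasing, the algorithm must then terminate, which is~(i). For $\theta\in(0,1/2]$ one has $2\theta\le1$, hence $r_k\le r_k^{2\theta}$ once $r_k<1$, so $r_{k+1}\le(1-1/C')r_k$ and $\{r_k\}$ decays geometrically; the transfer bound then yields R-linear convergence of $\{x_k\}$, which is~(ii). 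For $\theta\in(1/2,1)$ I would invoke the standard discrete \L{}ojasiewicz lemma used in \cite[Theorem~4.9]{MR4078808}: the recursion $r_{k+1}\le r_k-(1/C')r_k^{2\theta}$ with $2\theta>1$ implies $r_k=O\!\bigl(k^{-1/(2\theta-1)}\bigr)$, whence $\|x_k-\bar{x}\|\le CM\,r_k^{1-\theta}=O\!\bigl(k^{-(1-\theta)/(2\theta-1)}\bigr)$, which is exactly~(iii).

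The main obstacle is the polynomial case~(iii): converting the one-step inequality $r_{k+1}\le r_k-(1/C')r_k^{2\theta}$ into the sharp algebraic rate $k^{-1/(2\theta-1)}$. This is genuinely a comparison with the solution of $\dot r=-c\,r^{2\theta}$ and requires a careful induction (or a telescoping argument on $r_k^{\,1-2\theta}$) rather than a one-line estimate; this is precisely the part that mirrors \cite[Theorem~4.9]{MR4078808} and is why the authors omit it. Everything else---the transfer bound, the relative-error estimate, and the linear and finite-termination cases---is routine once the recursion is set up, the only care being to invoke all inequalities for indices large enough that $x_k\in\mathbb{B}_\eta(\bar{x})$, that $\tau_k\ge\gamma$, and that $r_k<1$.
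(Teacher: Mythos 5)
Your proof is correct. The paper itself omits the proof of this theorem, referring to the argument of \cite[Theorem~4.9]{MR4078808}, and your proposal is precisely that standard argument assembled from the three ingredients available here: the sufficient decrease $r_k-r_{k+1}\ge\sigma\zeta\tau_k\|d_k\|^2\ge\sigma\zeta\gamma\|d_k\|^2$ (with $\gamma=\inf_k\tau_k>0$ from Theorem~\ref{The01}(ii)), the relative-error bound $\|w_k\|\le(\kappa+\rho_{\max})\|d_k\|$ from \eqref{LipGrad}, and the KL inequality, which together yield the scalar recursion $r_k^{2\theta}\le C'(r_k-r_{k+1})$. The only organizational difference from the cited proof is that you run the recursion on the function-value gaps $r_k$ and transfer the rate to the iterates only at the end, via the telescoped bound $\|x_k-\bar{x}\|\le\sum_{j\ge k}\|x_{j+1}-x_j\|\le CM\,r_k^{1-\theta}$ obtained from \eqref{eq:Kur-Loj}, whereas the Attouch--Bolte-style argument in the reference derives a recursion of the form $\Delta_k^{\theta/(1-\theta)}\le C''(\Delta_{k-1}-\Delta_k)$ for the tail sums $\Delta_k$ themselves; both routes produce identical exponents, and yours is arguably cleaner because the transfer bound is used only once. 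Two small points worth making explicit in a written-up version: in case (ii) the factor $1-1/C'$ is only a contraction when $C'>1$, but if $C'\le 1$ the recursion forces $r_{k+1}\le 0$ and hence finite termination, so the conclusion survives in either case; and in case (i) the finite termination of the \emph{algorithm} (not merely $r_k=0$) follows because Lemma~\ref{lemma1} guarantees strict decrease of $\varphi(x_k)$ whenever $w_k\ne 0$, so $r_k=0$ is incompatible with a further iteration.
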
\vspace*{-0.1in}

\begin{remark}
Together with our main Algorithm~\ref{alg:1}, we can consider its modification with the replacement of $\partial(-h)(x_k)$ by $-\partial h(x_k)$. In this case, the most appropriate version of the Kurdyka--{\L}ojasiewicz inequality \eqref{Kur-Loj}, ensuring the fulfillment the corresponding versions of Theorem~\ref{Teo:Kur-Loj} and \ref{COR:Kur-Loj}, is the one
 \begin{align*}
 \psi\big(\varphi(x)-\varphi(\bar{x})\big)\,{\rm dist}\big(0;\partial^0\varphi(x)\big)\geq 1
 \end{align*}
expressed in terms of the {\em symmetric subdifferential} $\partial^0\varphi(x)$ from \eqref{sym}. Note that the latter is surely satisfied where the symmetric subdifferential is replaced by the {\em generalized gradient} 
$\overline{\partial}\varphi(x)$, which is the convex hull of $\partial^0\varphi(x)$. 
\end{remark}\vspace*{-0.35in}

 \section{Applications to Structured Constrained Optimization}\label{sec:5}\vspace*{-0.05in}

In this section, we present implementations and specifications of our main RCSN Algorithm~\ref{alg:1} for two structured classes of optimization problems. The first class contains functions represented as sums of two nonconvex functions one of which is smooth, while the other is extended-real-valued. The second class concerns minimization of smooth functions over closed constraint sets.\vspace*{-0.25in}

\subsection{Minimization of Structured Sums}\label{subsec1}\vspace*{-0.05in}

Here we consider the following class of structured optimization problems:
\begin{equation}\label{ProFBE}
\min_{x\in \R^n}\varphi(x):=f(x)+\psi(x),
\end{equation}
where $f:\mathbb{R}^n\to\mathbb{R}$ is of class $\mathcal{C}^{2,1}$ with the $L_f$-Lipschitzian gradient, and where $\psi: \mathbb{R}^n \to \Rex$ is an extended-real-valued prox-bounded function with the threshold $\lambda_\psi>0$. When both functions $f$ and $\psi$ are convex, problems of type \eqref{ProFBE} have been largely studied under the name of ``convex composite optimization" emphasizing the fact that $f$ and $\psi$ are of completely different structures. In our case, we do not impose any convexity of $f,\psi$ and prefer to label \eqref{ProFBE} as {\em minimization of structured sums} to avoid any confusions with optimization of function compositions, which are typically used in major models of variational analysis and constrained optimization; see, e.g., \cite{MR1491362}. 

In contrast to the original class of {\em unconstrained}  problems of {\em difference programming} \eqref{EQ01}, the structured {\em sum optimization} form \eqref{ProFBE} covers optimization problems with {\em constraints} given by $x\in\dom\psi$. Nevertheless, we show in what follows that the general class of problem \eqref{ProFBE} can be reduced under the assumptions imposed above to the difference form \eqref{EQ01} satisfying the conditions for the required performance of Algorithm~\ref{alg:1}. 

This is done by using an extended notion of envelopes introduced by Patrinos and Bemporad in \cite{Patrinos2013}, which is now commonly referred as the \emph{forward-backward envelope}; see, e.g., \cite{MR3845278}.\vspace*{-0.1in}

\begin{definition}
Given $ \varphi = f + \psi$ and $\lambda >0$, the \emph{forward-backward envelope} (FBE) of the function $\varphi$ with the parameter $\lambda$ is defined by
\begin{align}\label{FBE}
\varphi_\lambda(x) :=\inf_{z \in \mathbb{R}^n}\Big\{ f(x) + \langle \nabla f(x),z-x\rangle + \psi(z) +\frac{1}{2\lambda }\| z- x\|^2\Big\}.
\end{align}
\end{definition}

Remembering the constructions of the Moreau envelope \eqref{moreau} and the Asplund function \eqref{asp} allows us to represent $\varphi_\lambda$ for every $\lambda \in (0, \lambda_\psi)$ as:
\begin{equation}\label{rep02}
\begin{array}{ll}
\varphi_\lambda(x)&= f(x) -\disp\frac{\lambda}{2}\| \nabla f(x)\|^2 +  \MoreauYosida{\psi}{\lambda}\big(x-\lambda \nabla f(x)\big) \\
&=\disp f(x) +\frac{1}{2\lambda} \|x\|^2 -\langle \nabla f(x), x\rangle  - \Asp{\lambda}{\psi}\big(x- \lambda \nabla f(x)\big).
 \end{array}
 \end{equation}

\begin{remark}\label{rem:infimum}
It is not difficult to show that whenever $\nabla f$ is $L_f$-Lipschitz on $\mathbb{R}^n$ and $\lambda \in (0,\frac{1}{L_f})$, the optimal values in problems \eqref{EQ01} and \eqref{FBE} are the same
\begin{align}\label{eqinf}
\inf_{x\in \mathbb{R}^n}\varphi_\lambda(x)= \inf_{x\in \mathbb{R}^n } \varphi(x).
\end{align}
Indeed, the inequality ``$\leq $'' in \eqref{eqinf} follows directly from the definition of $\varphi_\lambda$. The reverse inequality in \eqref{eqinf} is obtained by
\begin{equation*}
\begin{array}{ll}
\disp\inf_{x\in \mathbb{R}^n} \varphi_\lambda(x)=\disp\inf_{x\in \mathbb{R}^n}\disp\inf_{ z \in \R^n } \Big\{ f(x) + \langle \nabla f(x),z-x\rangle + \psi(z) +\disp\frac{1}{2\lambda  }\|z- x\|^2\Big\}\\
\geq\disp\inf_{x\in \mathbb{R}^n}\disp\inf_{ z \in \R^n}\Big\{ f(z) -\frac{L_f}{2} \|z-x\|^2 + \psi(z) +\disp\frac{1}{2\lambda  }\| z- x\|^2\Big\}\\
= \disp\inf_{ z \in \R^n }\disp\inf_{x\in \mathbb{R}^n}\Big\{ f(z) + \psi(z)+\Big(\frac{1}{2\lambda } -\disp\frac{L_f}{2}\Big)\| z- x\|^2\Big\}= \disp\inf_{ z \in \R^n }\varphi(x).
\end{array}
\end{equation*}
Moreover, \eqref{eqinf} does not hold if $\nabla f$ is not Lipschitz continuous on $\R^n$. Indeed, consider $f(x):=\frac{1}{4} x^4$ and $\psi:=0$. Then we have $\inf_{x\in\mathbb{R}^n} \varphi(x) =0$ while $\varphi_\lambda(x)=\frac{1}{4} x^4 - \frac{\lambda}{2} x^6 $, which yields $\inf_{x\in \mathbb{R}^n} \varphi_\lambda(x)=-\infty$, and so \eqref{eqinf} fails.
\end{remark}\vspace*{-0.05in}
	
The next theorem shows that FBE  \eqref{FBE} can be written as the difference of a $\mathcal{C}^{1,1}$ function and a Lipschitzian prox-regular function. Furthermore, it establishes relationships between minimizers and critical points of $\varphi$ and $\varphi_\lambda$.\vspace*{-0.1in}

\begin{theorem}\label{diff_repr_varphi}
Let  $\varphi=f+\psi$, where $f$ is of class $\mathcal{C}^{2,1}$ and where $\psi$ is prox-bounded with threshold $\lambda_\psi>0$. Then for any $\lambda \in (0,\lm_\psi)$, we have the inclusion
\begin{equation}\label{Subbasic}
\partial\varphi_\lambda (x)\subseteq \lambda^{-1}\big( I- \lambda \nabla^2 f(x)\big) \big(x -\Prox{\psi}{\lambda}\big(x-\lambda \nabla f(x)\big)\big). 
\end{equation}
Furthermore, the following assertions are satisfied:

{\bf(i)} \label{diff_repr_varphi_a} If $x\in \mathbb{R}^n$ is a stationary point of $\varphi_\lambda$, then  $0\in \Hat{\partial} \varphi(x)$ provided that the matrix $ I- \lambda \nabla^2 f(x)$ is nonsingular.

{\bf(ii)} \label{diff_repr_varphi_c} The FBE \eqref{FBE} can be written as $\varphi_\lambda=g-h$, where $g(x):= f(x) +\frac{1}{2\lambda} \|x\|^2 $ is of class $\mathcal{C}^{2,1}$, and where $h(x):= \langle \nabla f(x),x\rangle + \Asp{\lambda}{\psi}(x- \lambda \nabla f(x))$ is locally Lipschitzian and prox-regular on $\R^n$. Moreover,   $\nabla^2 g(x)$ is  $\xi$-lower-definite for all $x\in \mathbb{R}^n$ with $\xi:=\frac{1}{\lambda} - L_f$.

{\bf(iii)} \label{diff_repr_varphi_d} If $\psi:=\delta_{C}$ for a closed set $C$, then $\partial (-\Asp{\lambda}{\psi})=-\frac{1}{\lambda}\mathtt{P}_C$, where $\mathtt{P}_C$ denotes the $($generally set-valued$)$ projection operator onto $C$. In this case, inclusion \eqref{Subbasic} holds as an equality.

{\bf(iv)} \label{diff_repr_varphi_b}  If both $f$ and $\psi$ are convex, we have that $\varphi_\lambda = g - h$, where $g(x):= f(x) + \MoreauYosida{\psi}{\lambda} (x-\lambda \nabla f(x))$ and $h(x):= \frac{\lambda}{2}\| \nabla f(x)\|^2$ are of class $\mathcal{C}^{1,1}$ $($and hence prox-regular$)$ on $\R^n$, and that
\begin{align}\label{eqconvexcase}
\big\{x \in \mathbb{R}^n\;\big|\;\nabla \varphi_\lambda(x)=0\big\} =\big\{ x\in \mathbb{R}^n\;\big|\;0 \in \partial \varphi(x)\big\}
\end{align}
provided that $ I- \lambda \nabla^2 f(x)$ is nonsingular at any stationary point of $\varphi_\lambda$.
\end{theorem}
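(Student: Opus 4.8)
The plan is to anchor everything on the second representation of the FBE in \eqref{rep02}, namely $\varphi_\lambda = g - h$ with $g(x) = f(x) + \frac{1}{2\lambda}\|x\|^2$ and $h(x) = \langle\nabla f(x), x\rangle + \Asp{\lambda}{\psi}(x - \lambda\nabla f(x))$, and then to compute $\partial\varphi_\lambda$ via the subdifferential calculus of \cite{MR3823783,MR1491362}. To derive inclusion \eqref{Subbasic}, I would first apply the sum rule for the limiting subdifferential, which is exact here because $g$ is $\mathcal{C}^{1,1}$, obtaining $\partial\varphi_\lambda(x) = \nabla g(x) + \partial(-h)(x)$, and then split $-h$ into the smooth part $-\langle\nabla f(x), x\rangle$, with gradient $-\nabla^2 f(x)x - \nabla f(x)$, and the composite part $-\Asp{\lambda}{\psi}\circ\Phi$, where $\Phi(x) := x - \lambda\nabla f(x)$. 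Applying the chain rule for composition with the smooth map $\Phi$, whose Jacobian $I - \lambda\nabla^2 f(x)$ is symmetric, together with the proximal estimate \eqref{eq_sub_eq01} of Proposition~\ref{Lemma5.1}, gives $\partial[-\Asp{\lambda}{\psi}\circ\Phi](x) \subseteq -\frac{1}{\lambda}(I - \lambda\nabla^2 f(x))\Prox{\psi}{\lambda}(x - \lambda\nabla f(x))$. Summing the contributions, the smooth terms telescope into $\frac{1}{\lambda}(I - \lambda\nabla^2 f(x))x$, and combining with the composite term produces exactly the right-hand side of \eqref{Subbasic}.

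For assertion (i), I would use \eqref{Subbasic} at a stationary point $x$: the relation $0 \in \frac{1}{\lambda}(I - \lambda\nabla^2 f(x))(x - \Prox{\psi}{\lambda}(x - \lambda\nabla f(x)))$ combined with the nonsingularity of $I - \lambda\nabla^2 f(x)$ forces $x \in \Prox{\psi}{\lambda}(x - \lambda\nabla f(x))$. The Fermat condition for this proximal minimization then yields $-\nabla f(x) = \frac{1}{\lambda}\big((x - \lambda\nabla f(x)) - x\big) \in \Hat\partial\psi(x)$, and the regular-subdifferential sum rule for the smooth-plus-l.s.c. sum $\varphi = f + \psi$ gives $0 \in \nabla f(x) + \Hat\partial\psi(x) = \Hat\partial\varphi(x)$. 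Assertion (ii) is then a verification: $g$ is $\mathcal{C}^{2,1}$ as the sum of $f \in \mathcal{C}^{2,1}$ and a quadratic; $h$ is locally Lipschitzian and prox-regular because $\langle\nabla f(\cdot),\cdot\rangle$ is $\mathcal{C}^{1,1}$ while the Asplund composition is prox-regular by the last statement of Proposition~\ref{Lemma5.1}, and prox-regularity persists under adding a $\mathcal{C}^{1,1}$ function. Finally, $\nabla^2 g(x) = \nabla^2 f(x) + \frac{1}{\lambda}I \succeq (\frac{1}{\lambda} - L_f)I$ because $\|\nabla^2 f(x)\| \leq L_f$, so Remark~\ref{rem:definite}(i) shows that $\nabla^2 g(x)$ is $(\frac{1}{\lambda} - L_f)$-lower-definite.

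For assertion (iii) with $\psi = \delta_C$, I would first note $\Prox{\delta_C}{\lambda} = \mathtt P_C$ directly from \eqref{prox}. Estimate \eqref{eq_sub_eq01} gives $\partial(-\Asp{\lambda}{\delta_C})(y) \subseteq -\frac{1}{\lambda}\mathtt P_C(y)$, and for the reverse inclusion I would invoke the last assertion of Proposition~\ref{Lemma5.1}: it suffices to check that each projection point $v \in \mathtt P_C(y)$ satisfies $v \notin \co(\mathtt P_C(y)\setminus\{v\})$. This is the crux of the part — since all points of $\mathtt P_C(y)$ lie on the sphere of radius $\dist(y;C)$ about $y$ and the Euclidean ball is strictly convex, each such $v$ is an extreme point and cannot be a convex combination of the others. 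This yields the equality $\partial(-\Asp{\lambda}{\delta_C}) = -\frac{1}{\lambda}\mathtt P_C$. To promote \eqref{Subbasic} to an equality I would then observe that $-\Asp{\lambda}{\delta_C}(x - \lambda\nabla f(x))$ is a pointwise minimum over $c \in C$ of the smooth functions $x \mapsto -\frac{1}{\lambda}\langle x - \lambda\nabla f(x), c\rangle + \frac{1}{2\lambda}\|c\|^2$, so that its limiting subdifferential equals the exact set of active gradients $\{-\frac{1}{\lambda}(I - \lambda\nabla^2 f(x))c : c \in \mathtt P_C(x - \lambda\nabla f(x))\}$; feeding this exact evaluation into the sum-rule computation of the first paragraph turns the lone inclusion step into an equality.

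For assertion (iv), I would switch to the first representation in \eqref{rep02}, identifying $g(x) = f(x) + \MoreauYosida{\psi}{\lambda}(x - \lambda\nabla f(x))$ and $h(x) = \frac{\lambda}{2}\|\nabla f(x)\|^2$. Both are $\mathcal{C}^{1,1}$: $h$ because $\nabla f$ is $\mathcal{C}^{1,1}$, and $g$ because the Moreau envelope of a convex $\psi$ is $\mathcal{C}^{1,1}$ and is here composed with the $\mathcal{C}^{1,1}$ map $\Phi$; hence $\varphi_\lambda$ is $\mathcal{C}^{1,1}$ and $\partial\varphi_\lambda(x) = \{\nabla\varphi_\lambda(x)\}$. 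For the identity \eqref{eqconvexcase}, the inclusion ``$\subseteq$'' follows from part (i) together with $\Hat\partial\varphi = \partial\varphi$ for the convex $\varphi$ under the nonsingularity hypothesis, whereas ``$\supseteq$'' follows since $0 \in \partial\varphi(x) = \nabla f(x) + \partial\psi(x)$ means $-\nabla f(x) \in \partial\psi(x)$, which for convex $\psi$ is equivalent to $x = \Prox{\psi}{\lambda}(x - \lambda\nabla f(x))$; feeding this into \eqref{Subbasic} forces $\nabla\varphi_\lambda(x) = 0$ irrespective of nonsingularity. I expect the main obstacle to be the equality claim in (iii) — both the extreme-point verification and the recognition of the composite as a minimum of smooth functions needed to make the chain rule exact — while the remaining parts are careful but routine applications of the calculus assembled in Section~\ref{sec:2}.
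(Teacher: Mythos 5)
Your proposal is correct and follows essentially the same route as the paper: the basic subdifferential sum and chain rules applied to representation \eqref{rep02} together with Proposition~\ref{Lemma5.1} to obtain \eqref{Subbasic}, the same ``nonsingularity forces $x\in\Prox{\psi}{\lambda}(x-\lambda\nabla f(x))$'' argument for (i), the same verification via Proposition~\ref{Lemma5.1} for (ii), the identical equidistance/strict-convexity extreme-point argument for the reverse inclusion in \eqref{eq_sub_eq01} for (iii), and the same two inclusions for (iv). The only substantive divergence is that you supply an explicit argument (viewing $-\Asp{\lambda}{\delta_C}\big(x-\lambda\nabla f(x)\big)$ as a pointwise minimum of smooth functions) for the equality claim in \eqref{Subbasic} under (iii), a step the paper leaves implicit; note only that your sketch still takes for granted that the chain rule through $x\mapsto x-\lambda\nabla f(x)$ is exact, which merits a word of justification (e.g., invertibility of $I-\lambda\nabla^2 f(x)$ when $\lambda<1/L_f$), but this does not affect the remaining parts.
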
\vspace*{-0.05in}
\begin{proof}
Observe that inclusion \eqref{Subbasic} follows directly by applying the basic subdifferential sum and chain rules from \cite[Theorem~2.19 and Corollary~4.6]{MR3823783}, respectively, the first representation of $\ph_\lm$ in \eqref{rep02} with taking into account the results of Lemma~\ref{Lemma5.1}. Now we pick any stationary point $x\in\R^n$ of the FBE $\ph_\lambda$ and then  deduce from
$0\in \partial \varphi_\lambda(x)$ and \eqref{Subbasic} that 
\begin{equation*}
x \in\Prox{\psi}{\lambda}\big(x-\lambda \nabla f(x))\big),
\end{equation*}
which readily implies that $0 \in \nabla f(x) +\hat{\partial} \psi(x)=\hat{\partial} \varphi(x)$ and thus verifies (i). Assertion (ii) follows directly from  Proposition~\ref{Lemma5.1} and the smoothness of $f$.

To prove (iii), we need to verify the reverse inclusion ``$\supseteq$'' in \eqref{eq_sub_eq01}, for which it suffices to show that the inclusion $v\in\mathtt{P}_C(x)$ yields $v\not\in{\rm co}(\mathtt{P}_C(x)\setminus\{v\})$. On the contrary, if $v\in\mathtt{P}_C(x)\cap{\rm co}(\mathtt{P}_C(x)\setminus\{v\})$, then there exist $c_1,\ldots,c_m\in P_C(x)\setminus\{v\}$ and $\mu_1,\ldots,\mu_m\in (0,1)$ such that $v=\sum_{i=1}^m\mu_i c_i$ with $\sum_{i=1}^m \mu_i=1$. By definition of the projection, we get the equalities
\begin{equation*}
|c_1-x\|^2=\ldots=\|c_m-x\|^2=\|v-x\|^2=\Big\|\sum_{i=1}^m\mu_i(c_i-x)\Big\|^2,
\end{equation*}
which contradict the strict convexity of $\|\cdot\|^2$ and thus verifies (iii).

The first statement in (iv) follows from the differentiability of $f$ and of the Moreau envelope $\MoreauYosida{\psi}{\lambda}$ by \cite[Theorem~2.26]{MR1491362}. Further, the inclusion ``$\subseteq $'' in \eqref{eqconvexcase} is a consequence of (i). To justify the reverse inclusion in \eqref{eqconvexcase}, observe that any $x$ satisfying $0\in\partial\ph(x)$ is a global minimizer of the convex function $\ph$, and so $x = \Prox{\psi}{\lambda} (x-\lambda \nabla f(x))$. The differentiability of $\varphi_\lambda$ and \eqref{Subbasic} (which holds as an equality in this case) tells us that $\nabla\varphi_\lambda(x)=0$, and thus \eqref{eqconvexcase} holds. This completes the proof of the theorem.
\end{proof}\vspace*{-0.25in}

 \begin{remark}\label{rem:DC repres} Based on Theorem~\ref{diff_repr_varphi}(ii), it is not hard to show that the FBE function $\varphi_\lambda$ can be represented as a difference of convex functions. Indeed, since $\Asp{\lambda}{\psi}$ is a locally Lipschitzian and prox-regular function, we have by \cite[Corollary~3.12]{MR2101873} that $h$ is a lower-$\mathcal{C}^2$ function, and hence by \cite[Theorem~10.33]{MR1491362}, it is locally a DC function. Similarly, $g$ being a $\mathcal{C}^2$ function is a DC function, so the difference $\varphi=g-h$ is also a DC function. However, it is difficult to determine for numerical purposes what is an appropriate representation of $\varphi$ as a difference of convex functions. Moreover, such a representation of the objective in terms of convex functions may generate some theoretical and algorithmic challenges as demonstrated below in Example~\ref{Example5.4}.
 \end{remark}\vspace*{-0.35in}

\subsection{Nonconvex Optimization with Geometric Constraints}\label{subsec:5.1}\vspace*{-0.1in}

This subsection addresses the following problem of {\em constrained optimization} with explicit geometric constraints given by:
\begin{equation}\label{prob:constrained}
\mbox{minimize }\;f(x)\;\mbox{ subject to }\;x\in C,
\end{equation}
where $f:\mathbb{R}^n\to\mathbb{R}$ is of class $\mathcal{C}^{2,1}$, and where $C\subseteq\mathbb{R}^n$ is an arbitrary closed set. Due to the lack of convexity, most of the available algorithms in the literature are not able to directly handle this problem. Nevertheless, Theorem~\ref{diff_repr_varphi} provides an effective machinery allowing us to reduce \eqref{prob:constrained} to an optimization problem that can be solved by using our developments. Indeed, define $\psi(x): = \delta_{C}(x)$ and observe that $\psi$ is prox-regular with threshold $\lambda_\psi = \infty$.
In this setting, FBE \eqref{FBE} reduces to the formula
\begin{equation*}
\varphi_\lambda(x)=f(x)-\frac{\lambda}{2}\|\nabla f(x)\|^2+\frac{1}{2\lambda}{\rm dist}^2\big(x-\lambda\nabla f(x);C\big).
\end{equation*}
Furthermore, it follows from Theorem~\ref{diff_repr_varphi}(iii) that
 \begin{align*}
 \partial \varphi_\lambda(x)=\lambda^{-1}\big(I-\lambda \nabla^2 f(x)\big)\big(x -    \mathtt{P}_C\big( x -\lambda \nabla f(x)\big)\big).
 \end{align*}
Based on Theorem~\ref{diff_repr_varphi}, we deduce from Algorithm~\ref{alg:1} with $\rho_k=0$ its following version to solve the constrained problem \eqref{prob:constrained}.\vspace*{-0.2in}

\begin{algorithm}[ht!]
\begin{algorithmic}[1]
\Require{$x_0 \in \R^n$, $\beta \in (0,1)$, $t_{\min}>0 $ and $\sigma\in(0,1)$.}
\For{$k=0,1,\ldots$}
\State Take $w_k\in \left(\lambda^{-1}I-\nabla^2 f(x_k)\right)\big( x_k -  \mathtt{P}_C\big(x-\lambda \nabla f(x_k)\big)\big)$.  
\State If $  w_k=0$, STOP and return~$x_k$. Otherwise set $d_k$ as the solution to the linear system $(\nabla^2 f(x_k)+\lambda^{-1}I)d_k=w_k$.
\State Choose any $\overline{\tau}_k\geq t_{\min}$. Set $\overline{\tau}_k:=\tau_k$.
\While{$\varphi_\lambda(x_k + \tau_k d_k) > \varphi_\lambda(x_k) +\sigma \tau_k \langle \nabla w_k , d_k\rangle $}
\State $\tau_k = \beta \tau_k$.
\EndWhile
\State Set $x_{k+1}:=x_k + \tau_kd_k$. \label{step5_2}
\EndFor
\end{algorithmic}
\caption{Projected-like Newton algorithm for constrained optimization}\label{alg:3}
\end{algorithm}\vspace*{-0.2in}

To the best of our knowledge, Algorithm~\ref{alg:3} is new even for the case of convex constraint sets $C$. All the results obtained for Algorithm~\ref{alg:1} in Sections~\ref{sec:3} and \ref{sec:4} can be specified for Algorithm~\ref{alg:3} to solve problem \eqref{prob:constrained}. For brevity, we present just the following direct consequence of Theorem~\ref{The01}. \vspace*{-0.05in}

\begin{corollary}\label{Cor:Theo01}
Considering problem \eqref{prob:constrained}, suppose that $f: \mathbb{R}^n \to\mathbb{R}$ is of class $\mathcal{C}^{2,1}$, that $C\subset\R^n$ is closed, and that 
$\inf_{x\in C}f(x) >-\infty$.  Pick an initial point $x_0 \in \mathbb{R}^n$ and a parameter $\lambda\in (0, \frac{1}{L_f})$. Then Algorithm~{\rm\ref{alg:3}} either stops at a point $x$ such that $0\in\nabla f(x)+\Hat{N}_C(x)$, or generates infinite sequences $\{x_k\}$, $\{\varphi_\lambda(x_k)\}$, $\{w_k\}$, $\{d_k\}$, and $\{\tau_k\}$ satisfying the assertions:

{\bf(i)} \label{Cor:The01a} The sequence $\{\varphi_\lambda(x_k)\}$ monotonically decreases and converges.

{\bf(ii)} \label{Cor:The01b} If $\{x_{k_j}\}$ is a bounded subsequence of $\{x_k\}$, then $\inf_{j\in\N} \tau_{k_j}>0$ and  
\begin{align*}
\sum\limits_{j \in \N}  \| d_{k_j}\|^2 < \infty,\; \sum\limits_{j\in \N } \| x_{k_j +1} - x_{k_j}\|^2< \infty,\;\sum\limits_{j\in \N } \| w_{k_j}\|^2 < \infty.
\end{align*}
If, in particular, the entire sequence $\{x_k\}$ is bounded, then the set of its accumulation points is nonempty, closed, and connected.
  
{\bf(iii)} \label{Cor:The01c} If $x_{k_j} \to \bar{x}$ as $j\to\infty$, then $0\in\nabla f(\bar{x})+\Hat{N}_C(\bar{x})$ and the equality $\varphi_\lambda(\bar{x})=\inf_{k\in \N} \varphi_\lambda(x_k)$ holds.

{\bf(iv)} \label{Cor:The01d} If the sequence $\{x_k\}$ has an isolated accumulation point $\bar{x}$, then it converges to $\bar{x}$ as $k\to\infty$, and we have $0\in\nabla f(\bar{x})+\Hat{N}_C(\bar{x})$.
\end{corollary}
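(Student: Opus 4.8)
The plan is to recognize Algorithm~\ref{alg:3} as the specialization of Algorithm~\ref{alg:1} to the forward-backward envelope $\varphi_\lambda$ of the composite objective $\varphi=f+\delta_C$, and then to transfer the conclusions of Theorem~\ref{The01} essentially verbatim. First I would set $\psi:=\delta_C$, which is prox-bounded with threshold $\lambda_\psi=\infty$, so that the chosen $\lambda\in(0,1/L_f)$ automatically lies in $(0,\lambda_\psi)$ and Theorem~\ref{diff_repr_varphi} is applicable. Part~(ii) of that theorem supplies the decomposition $\varphi_\lambda=g-h$ with $g(x)=f(x)+\frac{1}{2\lambda}\|x\|^2$ of class $\mathcal{C}^{2,1}$ and $h$ locally Lipschitzian and prox-regular on $\R^n$, together with the key fact that $\nabla^2 g(x)$ is $\xi$-lower-definite with $\xi=\frac{1}{\lambda}-L_f$; since $\lambda<1/L_f$, we have $\xi>0$. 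Part~(iii), specialized to $\psi=\delta_C$, yields the exact formula $\partial\varphi_\lambda(x)=\lambda^{-1}(I-\lambda\nabla^2 f(x))(x-\mathtt{P}_C(x-\lambda\nabla f(x)))$, which is precisely the set from which Step~2 of Algorithm~\ref{alg:3} draws $w_k$.

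Next I would verify that the hypotheses of Theorem~\ref{The01} hold for the objective $\varphi_\lambda$. The finiteness requirement $\inf\varphi_\lambda>-\infty$ follows from Remark~\ref{rem:infimum}, which gives $\inf_x\varphi_\lambda(x)=\inf_x\varphi(x)=\inf_{x\in C}f(x)>-\infty$ exactly because $\lambda\in(0,1/L_f)$. Since $g$ is continuous and $h$ is locally Lipschitzian, $\varphi_\lambda$ is continuous, so every sublevel set $\Omega$ is automatically closed. Assumption~(a) of Theorem~\ref{The01} holds because $g$ is $\mathcal{C}^{2,1}$ and $\partial^2 g(x)=\{\nabla^2 g(x)\cdot\}$ is $\xi$-lower-definite with the single constant $\xi=\frac{1}{\lambda}-L_f$ over all of $\Omega$, while assumption~(b) holds because $h$ is locally Lipschitzian and prox-regular on $\R^n\supseteq\Omega$.

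Then I would match the two algorithms step by step. With $\rho_k=0$ and $g\in\mathcal{C}^2$, the generalized-Hessian inclusion $-w_k\in\partial^2 g(x_k)(d_k)$ of Step~3 of Algorithm~\ref{alg:1} collapses to the linear system with coefficient matrix $\nabla^2 g(x_k)=\nabla^2 f(x_k)+\lambda^{-1}I$ solved in Step~3 of Algorithm~\ref{alg:3}; both unique solvability and the companion descent estimate $\langle w_k,d_k\rangle=-\langle\nabla^2 g(x_k)d_k,d_k\rangle\leq-\xi\|d_k\|^2$ follow from $\nabla^2 g(x_k)\succeq\xi I\succ0$, so any $\zeta\in(0,\xi]$ is admissible, matching the existence guarantee of Lemma~\ref{lemma1}. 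The backtracking loop and the update in the remaining steps are identical. Consequently Theorem~\ref{The01} applies and delivers assertions (i)--(iv), stated there in terms of stationary points of $\varphi_\lambda$.

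Finally I would translate stationarity of $\varphi_\lambda$ into the desired condition $0\in\nabla f(\bar x)+\Hat{N}_C(\bar x)$. This is where Theorem~\ref{diff_repr_varphi}(i) enters: whenever $0\in\partial\varphi_\lambda(\bar x)$ and $I-\lambda\nabla^2 f(\bar x)$ is nonsingular, one obtains $0\in\Hat{\partial}\varphi(\bar x)=\nabla f(\bar x)+\Hat{N}_C(\bar x)$. The required nonsingularity is free here, since $\|\nabla^2 f(x)\|\leq L_f$ and $\lambda L_f<1$ force the eigenvalues of $I-\lambda\nabla^2 f(x)$ into $[1-\lambda L_f,1+\lambda L_f]$, bounded away from $0$. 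Applying this translation to the stopping criterion ($w_k=0$) and to each accumulation point produced by Theorem~\ref{The01}(iii)--(iv) yields the four assertions of the corollary. I expect the only genuine subtlety to be this stationarity translation—keeping track of the \emph{regular} (rather than limiting) subdifferential in Theorem~\ref{diff_repr_varphi}(i) and confirming nonsingularity of $I-\lambda\nabla^2 f$; everything else is routine bookkeeping once Algorithm~\ref{alg:3} is identified with Algorithm~\ref{alg:1}.
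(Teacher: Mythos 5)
Your proposal is correct and follows exactly the route the paper intends: the corollary is presented there as a direct consequence of Theorem~\ref{The01} applied to the decomposition $\varphi_\lambda=g-h$ from Theorem~\ref{diff_repr_varphi}, with the stationarity translation via Theorem~\ref{diff_repr_varphi}(i) and the nonsingularity of $I-\lambda\nabla^2 f$ guaranteed by $\lambda L_f<1$, all of which you verify in detail. Your write-up in fact supplies more of the bookkeeping (finiteness of $\inf\varphi_\lambda$ via Remark~\ref{rem:infimum}, closedness of sublevel sets, the admissible choice $\zeta\in(0,\xi]$) than the paper records.
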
\vspace*{-0.05in}

The next example illustrates our approach to solve \eqref{prob:constrained} via Algorithm~\ref{alg:3} in contrast to algorithms of the DC type.\vspace*{-0.05in}

 \begin{example}\label{Example5.4} 		
Consider the minimization of a quadratic function over a closed (possibly nonconvex) set $C$:
 \begin{align}\label{ProblemQUAD_example}
 \mbox{minimize }\;\frac{1}{2}x^\tr Q x + b^\tr x\;  \text{ subject to }\;
 x\in C,	
 \end{align}
 where $Q$ is a symmetric matrix, and where $b \in \mathbb{R}^n$. In this setting, FBE \eqref{FBE} can be written as $\varphi_\lambda(x) = g(x) - h(x)$  with
 \begin{equation}\label{func_h_Example55}
 \begin{array}{ll}
 g(x)&:=\disp\frac{1}{2}x^\tr\big(  Q + \lambda^{-1} I\big) x + b^\tr x,\\  
h(x)& :=  x^\tr  Q  x + b^\tr x+ \Asp{\lambda}{\psi}\big((I- \lambda Q)x-\lambda b\big).
\end{array}
\end{equation}
Our method {\em does not require} a DC decomposition of the objective function $\varphi_\lambda$. Indeed the function $h$ in \eqref{func_h_Example55} is generally nonconvex. Specifically, consider $Q=\begin{bsmallmatrix}0&-1\\-1&0\end{bsmallmatrix}$, $b=(0,0)^\tr$, and $C$ being the unit sphere centered at the origin. Then $g$ in \eqref{func_h_Example55} is strongly convex for any $\lambda \in (0,1)$, while $h$ therein is not convex whenever $\lambda >0$. More precisely, in this case we have
$$
h(x_1,x_2) = -2x_1x_2+ \Asp{\lambda}{\psi}(x_1+\lambda x_2,\lambda x_1+x_2)\;\mbox{ with}
$$
\begin{align*}
\Asp{\lambda}{\psi}(x)=  \frac{1}{2\lambda}\left(\|x\|^2-d_C^2(x)\right)=\frac{1}{2\lambda}\left(\|x\|^2-(\|x\|-1)^2\right)=\frac{1}{2\lambda}\left(2\|x\|-1\right).
\end{align*}
This tells us, in particular, that 
$$
h(-1/2,-1/2)-\frac{1}{2}h(-1,-1)-\frac{1}{2}h(0,0)=\frac{1}{2},
$$
and thus $h$ is not convex regardless of the value of $\lambda$; see Figure~3.\vspace*{-0.2in}

\begin{figure}[h!!]
\centering
\includegraphics[width=.7\textwidth]{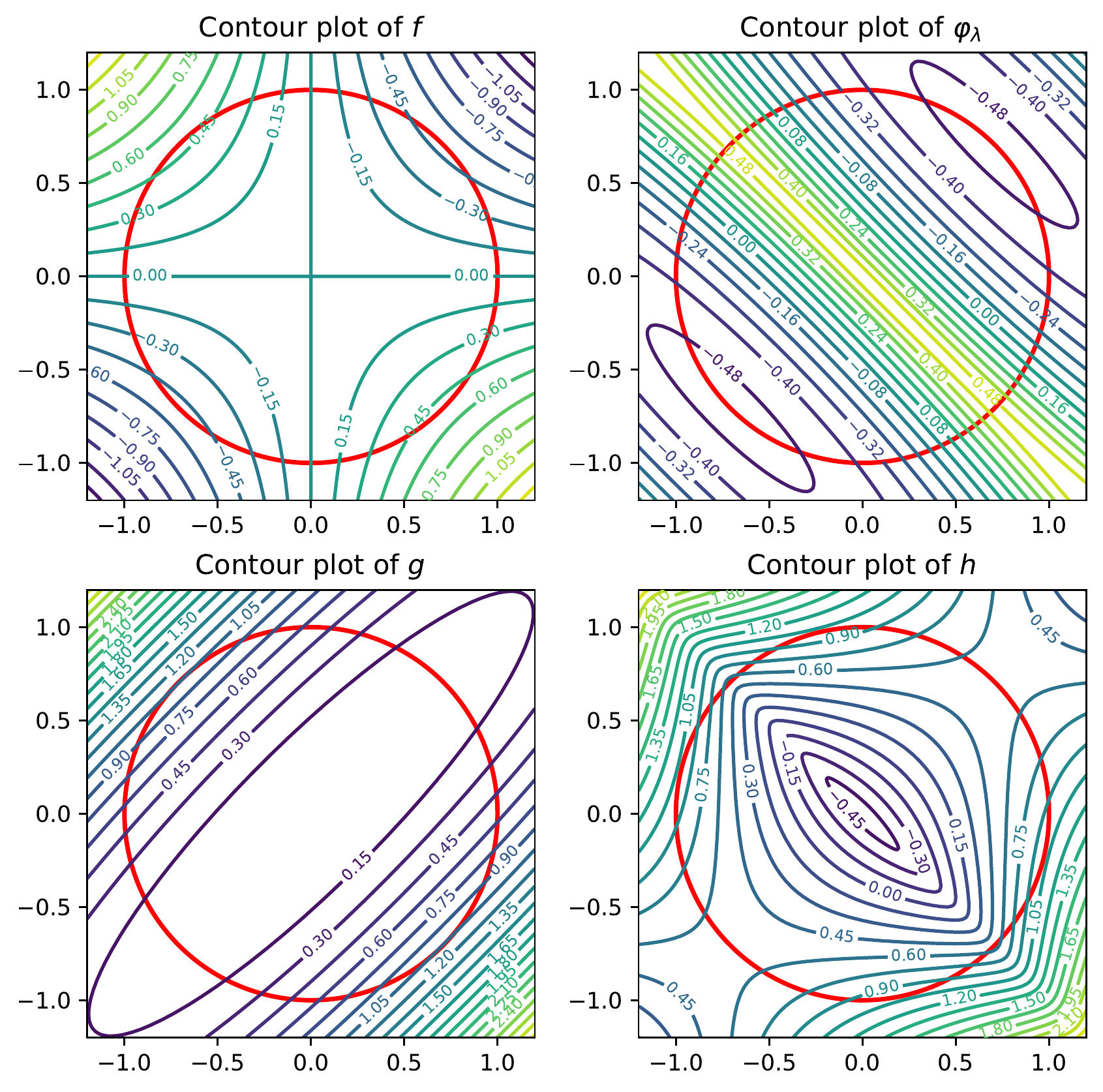}
\caption{Contour plot of  the functions $f$, $\varphi_\lambda$, $g$ and $h $ in \eqref{func_h_Example55} with $\lambda =0.9$}
\end{figure}
\end{example}\vspace*{-0.15in}

\section{Further Applications and Numerical Experiments}\label{sec:6}\vspace*{-0.1in}

In this section, we demonstrate the performance of Algorithm~\ref{alg:1} and Algorithm~\ref{alg:3} in two different problems. The first problem is smooth and arises from the study of system biochemical reactions. It can be successfully tackled with DCA-like algorithms, but they require to solve subproblems whose solutions cannot be analytically computed and are thus time-consuming. This is in contrast to Algorithm~\ref{alg:1}, which only requires solving the linear equation \eqref{EQALG01} at each iteration. The second problem is nonsmooth and consists of minimizing a quadratic function under both convex and nonconvex constraints. Employing FBE \eqref{FBE} and Theorem~\ref{diff_repr_varphi}, these two problems can be attacked by using DCA, BDCA, and Algorithm~\ref{alg:3}.

Both Algorithms~\ref{alg:1} and \ref{alg:3} have complete freedom in the choice of the initial value of the stepsizes $\overline{\tau}_k$ in Step~4, as long as they are bounded from below by a positive constant $t_{\min}$, while the choice of $\overline{\tau}_k$ totally determines the performance of the algorithms. On the one hand, a small value would permit the stepsize to get easily accepted in Step~5, but it would imply little progress in the iteration and (likely) in the reduction of the objective function, probably making it more prone to stagnate at local minima. On the other hand, we would expect a large value to ameliorate these issues, while it could result in a significant waste of time in the linesearch Steps~5-7 of both algorithms.

Therefore, it makes sense to consider a choice which sets the trial stepsize $\overline{\tau}_k$ depending on the stepsize $\tau_{k-1}$ accepted in the previous iteration, perhaps increasing it if no reduction of the stepsize was needed. This technique was introduced in~\cite[Section~5]{MR4078808} under the name of \emph{Self-adaptive trial stepsize}, and it was shown there that this accelerates the performance of BDCA in practice. A similar idea is behind the so-called \emph{two-way backtracking} linesearch, which was recently proposed in~\cite{Truong2021} for the gradient descent method, showing good numerical results on deep neural networks. In contrast to BDCA, our theoretical results require $t_{\min}$ to be strictly positive, so the technique should be slightly adapted as shown in Algorithm~\ref{alg:self-adaptive}. Similarly to \cite{MR4078808}, we adopt a conservative rule of only increasing the trial stepsize $\overline{\tau}_k$ when two consecutive trial stepsizes were accepted without decreasing them.\vspace*{0.1in}

\begin{algorithm}[ht!]
\begin{algorithmic}[1]
\Require{$\gamma>1$, $\overline{\tau}_0>0$}.
\State Obtain $\tau_0$ by Steps~5-7 of Algorithms~\ref{alg:1} or \ref{alg:3}.
\State Set $\overline{\tau}_1:=\max\{\tau_0,t_{\min}\}$ and obtain $\tau_1$ by Steps~5-7 of Algorithms~\ref{alg:1} or \ref{alg:3}.
\For{$k=2,3,\ldots$}
\If{$\tau_{k-2}=\overline{\tau}_{k-2}$ \textbf{and} $\tau_{k-1}=\overline{\tau}_{k-1}$}
\State $\overline{\tau}_{k}:=\gamma\tau_{k-1}$;
\Else
\State $\overline{\tau}_{k}:=\max\{\tau_{k-1},t_{\min}\}$.
\EndIf
\State Obtain $\tau_k$ by Steps~5-7 of Algorithms~\ref{alg:1} or \ref{alg:3}.
\EndFor
\end{algorithmic}
\caption{Self-adaptive trial stepsize}\label{alg:self-adaptive}
\end{algorithm}
\vspace*{-0.05in}

The codes in the first subsection below were written and ran in MATLAB version R2021b, while for the second subsection we used Python~3.8. The tests were ran on a desktop of Intel Core i7-4770 CPU 3.40GHz with 32GB RAM, under Windows 10 (64-bit).\vspace*{-0.2in}

\subsection{Smooth DC Models in Biochemistry}\vspace*{-0.1in}

Here we consider the problem motivating the development of BDCA in \cite{AragonArtacho2018}, which consists of finding a steady state of a dynamical equation arising in the modeling of {\em biochemical reaction networks}. We ran our experiments on the same 14 biochemical reaction network models tested in \cite{AragonArtacho2018,MR4078808}.
The problem can be modeled as finding a zero of the function
$$f(x):=\left([F,R]-[R,F]\right)\exp\left(w+[F,R]^\tr x\right),$$
where $F,R\in\mathbb{Z}_{\geq 0}^{m\times n}$ denote the forward and reverse \emph{stoichiometric matrices}, respectively, where $w\in\mathbb{R}^{2n}$ is the componentwise logarithm of the \emph{kinetic parameters}, where $\exp(\cdot)$ is the componentwise exponential function, and where $[\,\cdot\,,\cdot\,]$ stands for the horizontal concatenation operator. Finding a zero of $f$ is equivalent to minimizing the function $\varphi(x):=\|f(x)\|^2$, which can be expressed as a {\em difference of the convex functions}
\begin{equation}\label{eq:bio_DCA}
g(x):=2\left(\|p(x)\|^2+\|c(x)\|^2\right)\quad\text{and}\quad h(x):=\|p(x)+c(x)\|^2,
\end{equation}
where the functions $p(x)$ and $c(x)$ are given by
\begin{equation*}
p(x):=[F,R]\exp\left(w+[F,R]^\tr x\right)\quad\text{and}\quad c(x):=[R,F]\exp\left(w+[F,R]^\tr x\right).
\end{equation*}
In addition, it is also possible to write
\begin{equation*}
\varphi(x)=\|f(x)\|^2=\|p(x)-c(x)\|^2=\|p(x)\|^2+\|c(x)\|^2-2p(x)c(x),
\end{equation*}
and so $\varphi(x)$ can be decomposed as the difference of the functions
\begin{equation}\label{eq:bio_ours}
g(x):=\|p(x)\|^2+\|c(x)\|^2 \quad\text{and}\quad h(x)=2p(x)c(x)
\end{equation}
with $g$ being convex. Therefore, $\nabla^2 g(x)$ is $0$-lower definite, and minimizing $\varphi$ can be tackled with Algorithm~\ref{alg:1} by choosing $\rho_k\geq\zeta$ for some fixed $\zeta>0$.
As shown in \cite{AragonArtacho2018}, the function $\varphi$ is real analytic and thus satisfies the Kurdyka--{\L}ojasiewicz assumption of Theorem~\ref{COR:Kur-Loj}, but as observed in \cite[Remark~5]{AragonArtacho2018}, a linear convergence rate cannot be guaranteed.

Our first task in the conducted experiments was to decide how to set the parameters $\zeta$ and $\rho_k$. We compared the strategy of taking $\rho_k$ equal to some fixed value for all $k$, setting a decreasing sequence bounded from below by $\zeta$, and choosing $\rho_k=c\|w_k\|+\zeta$ for some constant $c>0$. In spite of Remark~\ref{rem:theorem}(ii), $\zeta$ was added in the last strategy to guarantee both Theorem~\ref{The01}(ii) and Theorem~\ref{COR:Kur-Loj}. We took $\zeta=10^{-8}$ and a constant $c=5$, which worked well in all the models. We tried several options for the decreasing strategy, of which a good choice seemed to be $\rho_k=\frac{\|w_0\|}{10^{\lfloor k/50\rfloor}}+\zeta$, where $\lfloor\cdot\rfloor$ denotes the floor function (i.e., the parameter was initially set to $\|w_0\|$ and then divided by $10$ every 50 iterations). The best option was this decreasing strategy, as can be observed in the two models in Figure~\ref{fig:bio_rhos}, and this was the choice for our subsequent tests.\vspace*{-0.1in}

\begin{figure}[ht!]
\centering
\includegraphics[width=.49\textwidth]{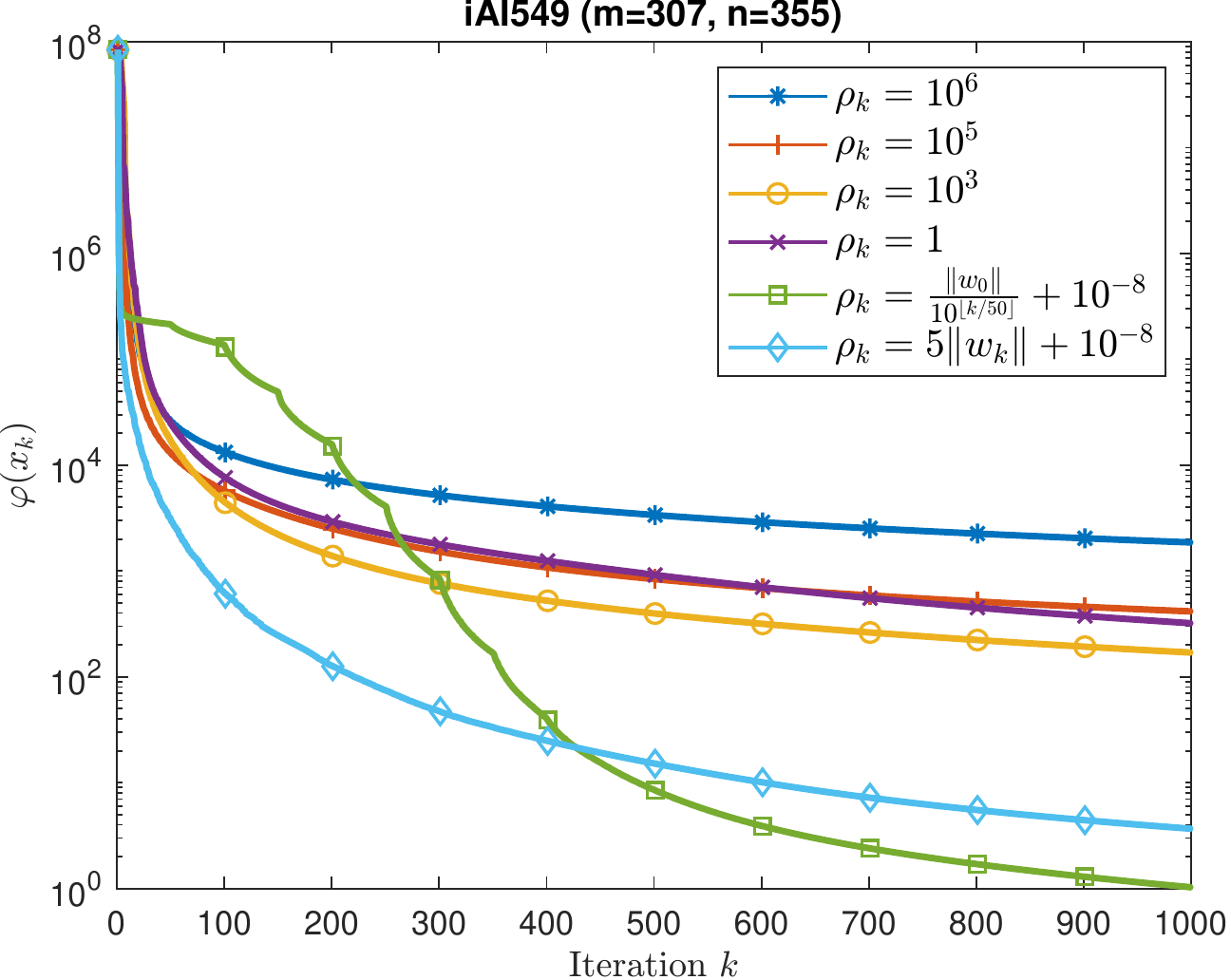} 
\includegraphics[width=.49\textwidth]{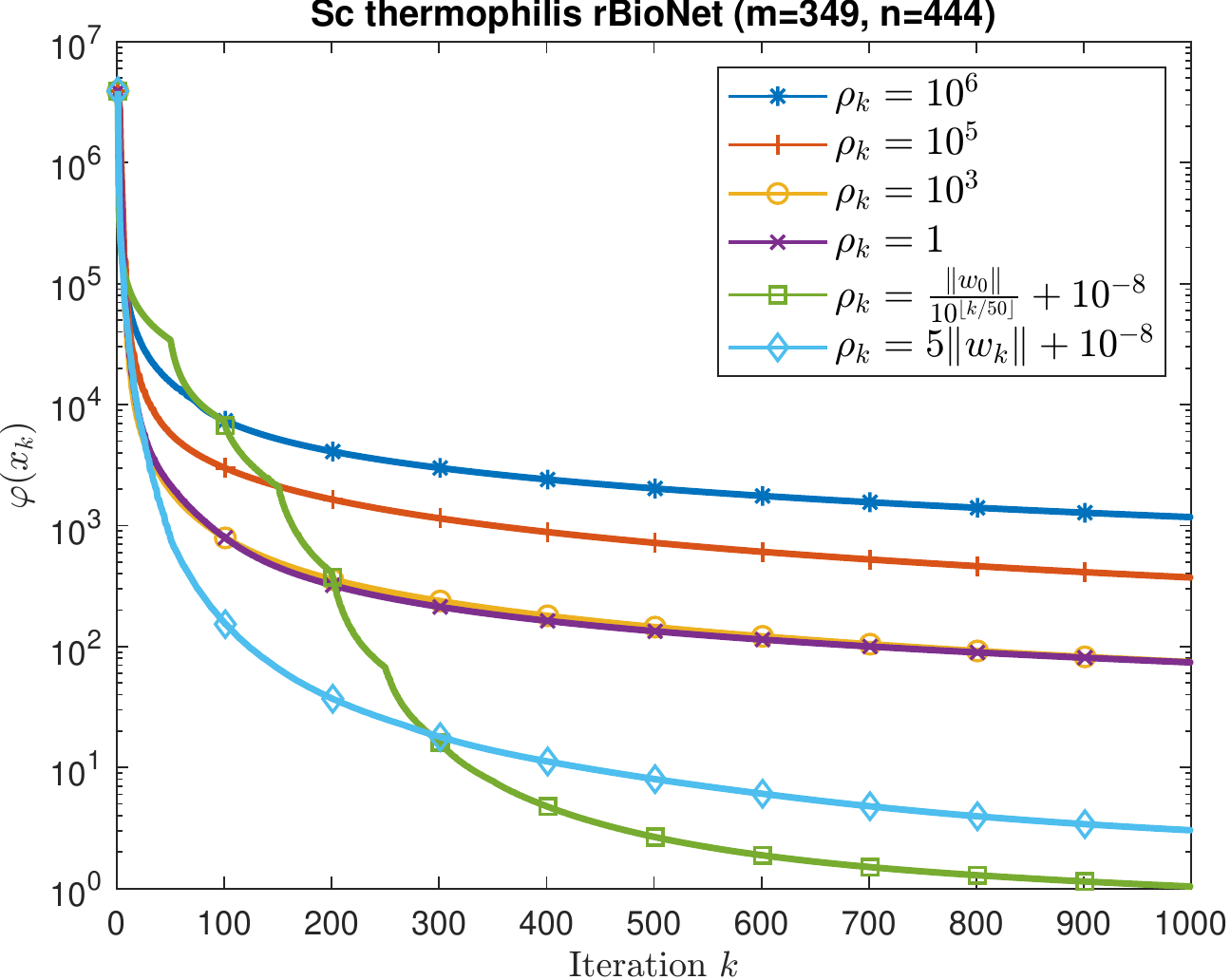}
\caption{Comparison of the objective values for three strategies for setting the regularization parameter $\rho_k$: constant (with values $10^6$, $10^5$, $10^3$ and $1$), decreasing,  and adaptive with respect to the value of $\|w_k\|$.}\label{fig:bio_rhos}
\end{figure}
\vspace*{-0.3in}

\begin{experiment}\label{exp1} For finding a steady state of each of the 14 biochemical models, we compared the performance of Algorithm~\ref{alg:1} and BDCA with self-adaptive strategy, which was the fastest method tested in~\cite{MR4078808} (on average, 6.7 times faster than DCA). For each model, 5 kinetic parameters were randomly chosen with coordinates uniformly distributed in $(-1,1)$, and 5 random starting points with random coordinates in $(-2,2)$ were picked. BDCA was ran using the same parameters as in~\cite{MR4078808}, while we took $\sigma=\beta=0.2$ for Algorithm~\ref{alg:1}.

We considered two strategies for setting the trial stepsize $\overline{\tau}_k$ in Step~4 of Algorithm~\ref{alg:1}: constantly initially set to 50, and self-adaptive strategy (Algorithm~\ref{alg:self-adaptive}) with $\gamma=2$  and $t_{\min}=10^{-8}$. For each model and each random instance, we computed 500 iterations of BDCA with self-adaptive strategy and then ran Algorithm~\ref{alg:1} until the same value of the target function $\varphi$ was reached. As in~\cite{AragonArtacho2018}, the BDCA subproblems were solved by using the function \texttt{fminunc} with \texttt{optimoptions(\char13 fminunc\char13,} \texttt{\char13 Algorithm\char13,} \texttt{\char13 trust-region\char13,} \texttt{\char13 GradObj\char13,} \texttt{\char13 on\char13,} \texttt{\char13 Hessian\char13,} \texttt{\char13 on\char13,} \texttt{\char13 Display\char13,} \texttt{\char13 off\char13,} \texttt{\char13 TolFun\char13,} \texttt{1e-8,} \texttt{\char13 TolX\char13,} \texttt{1e-8)}.

The results are summarized in Figure~\ref{fig:bio_ratio}, where we plot the ratios of the running times between BDCA with self-adaptive stepsize and Algorithm~\ref{alg:1} with constant trial stepsize against Algorithm~\ref{alg:1} with self-adaptive stepsize. On average, Algorithm~\ref{alg:1} with self-adaptive strategy was $6.69$ times faster than BDCA, and was $1.33$ times faster than Algorithm~\ref{alg:1} with constant strategy. The lowest ratio for the times of self-adaptive Algorithm~\ref{alg:1} and BDCA was $3.17$. Algorithm~\ref{alg:1} with self-adaptive stepsize was only once (out of the 70 instances) slightly slower (a ratio of 0.98) than with the constant strategy.\vspace*{-0.05in}

\begin{figure}[ht!]
\centering
\includegraphics[width=.7\textwidth]{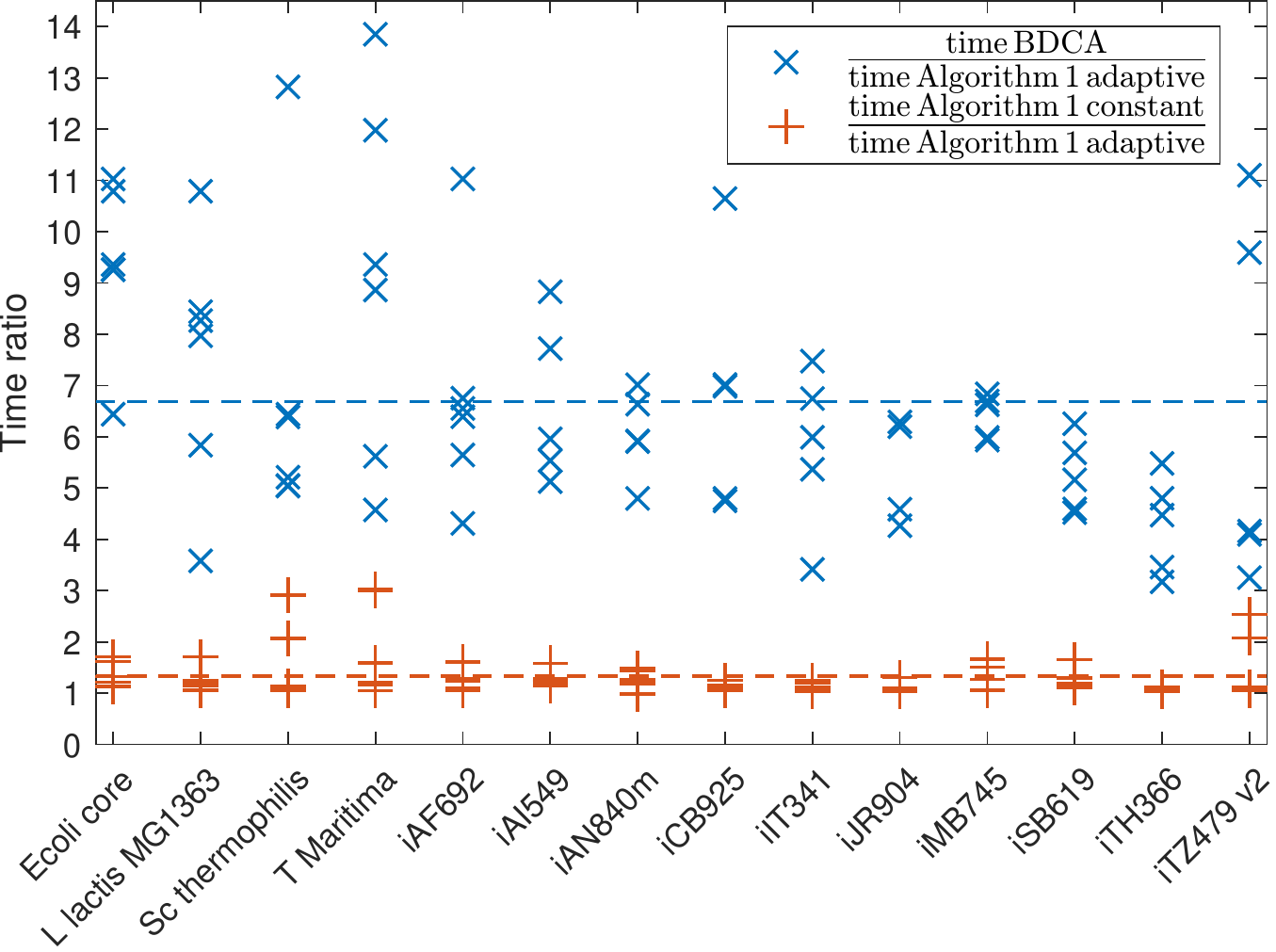}
\caption{Ratios of the running times of Algorithm~\ref{alg:1} with constant stepsize and BDCA with self-adaptive stepsize to Algorithm~\ref{alg:1} with self-adaptive stepsize. For each of the models, the algorithms were run using the same random starting points. The overall average ratio is represented with a dashed line}\label{fig:bio_ratio}
\end{figure}\vspace*{-0.2in}

In Figure~\ref{fig:bio_comparison}, we plot the values of the objective function for each algorithm and also include for comparison the results for DCA and BDCA without self-adaptive strategy. The self-adaptive strategy also accelerates the performance of Algorithm~\ref{alg:1}. We can observe in Figure~\ref{fig:bio_taus} that there is a correspondence between the drops in the objective value and large increases of the stepsizes $\tau_k$ (in a similar way to what was shown for BDCA in~\cite[Fig.~12]{MR4078808}).\vspace*{-0.15in}

\begin{figure}[ht!]
\centering
\includegraphics[width=.49\textwidth]{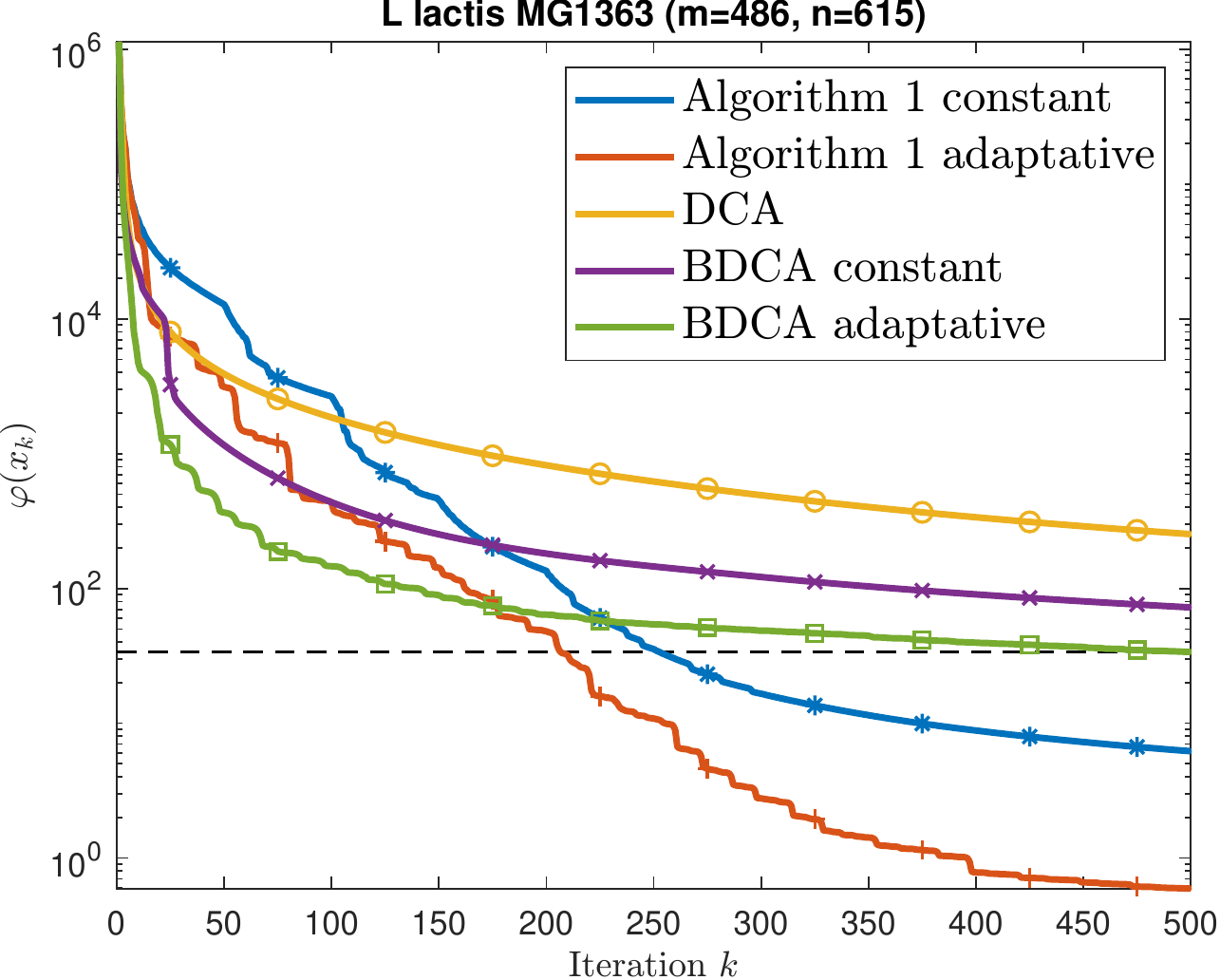} 
\includegraphics[width=.49\textwidth]{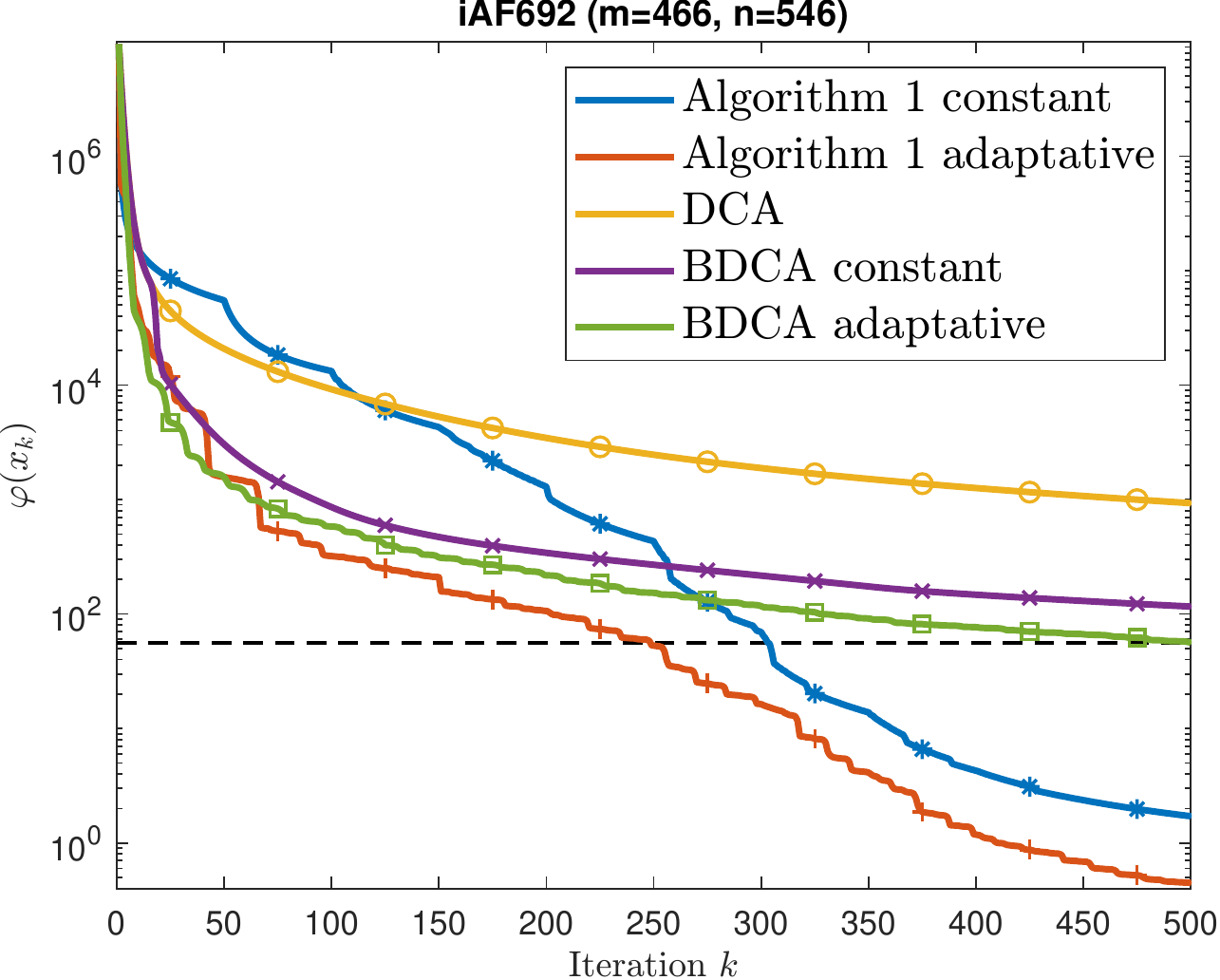}
\caption{Value of the objective function (with logarithmic scale) of Algorithm~\ref{alg:1}, DCA and BDCA for two biochemical models. The value attained after 500 iterations of BDCA with self-adaptive stepsize is shown by a dashed line.}\label{fig:bio_comparison}
\end{figure}

\begin{figure}[ht!]
\centering
\includegraphics[width=.49\textwidth]{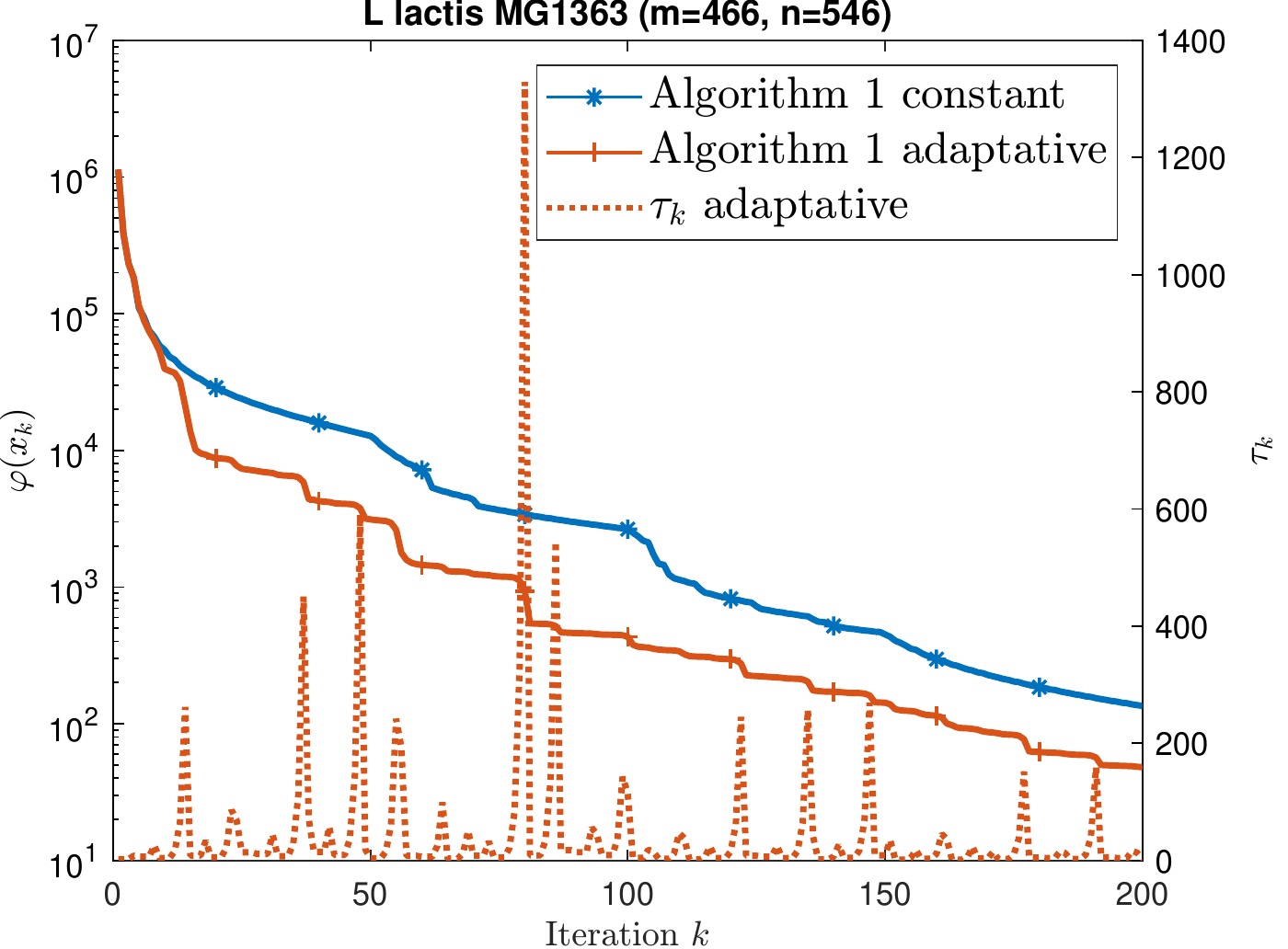} 
\includegraphics[width=.49\textwidth]{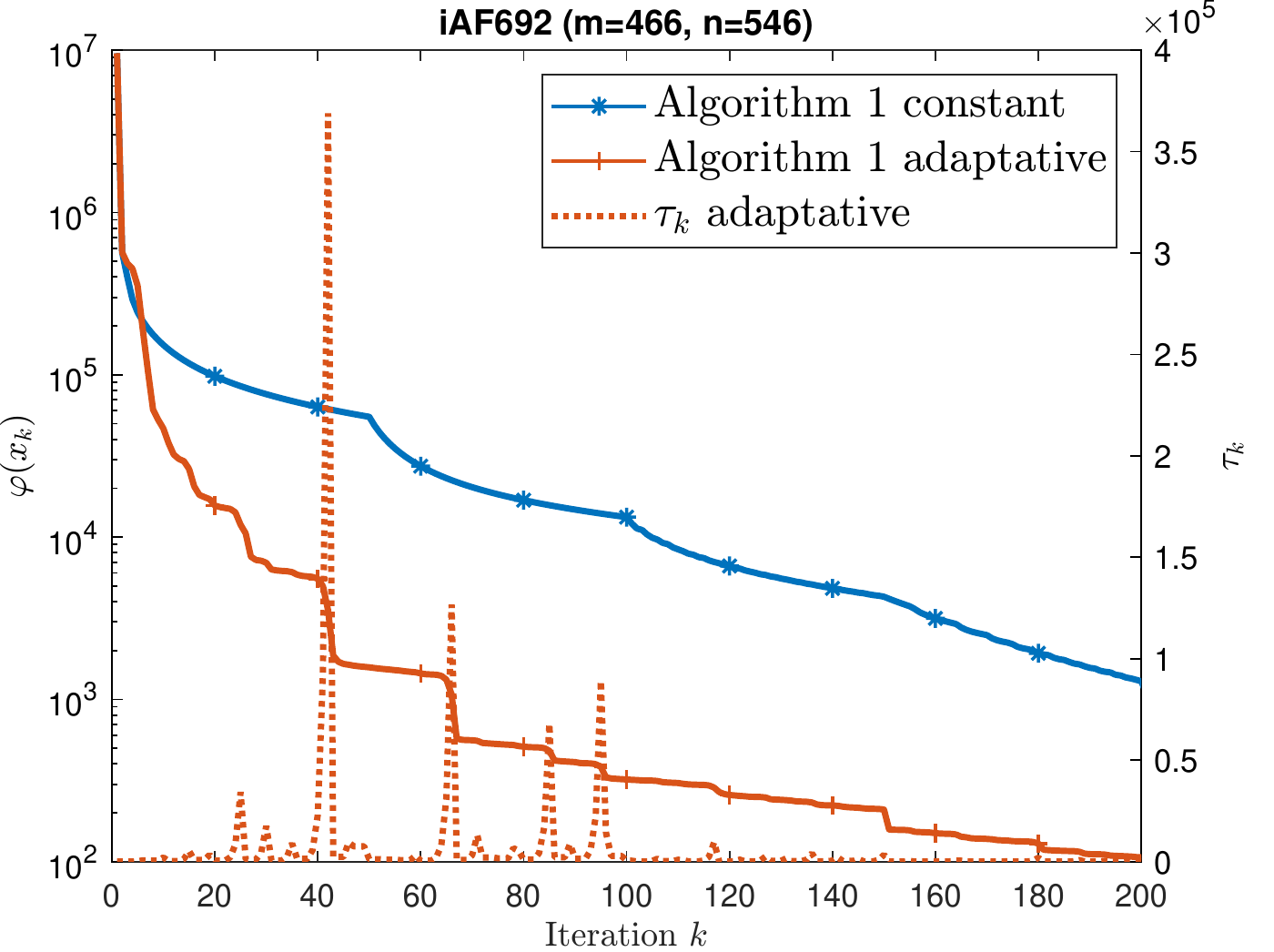}
\caption{Comparison of the self-adaptive and the constant (with $\overline{\tau}_k = 50$) choices for the trial
stepsizes  in Step 4~of Algorithm~\ref{alg:1} for two biochemical models. The plots include two
scales, a logarithmic one for the objective function values and a linear one for the stepsizes (which
are represented with discontinuous lines).}\label{fig:bio_taus}
\end{figure}
\end{experiment}

\subsection{Solving Constrained Quadratic Optimization Models}
\vspace*{-0.1in}

This subsection contains numerical experiments to solve problems of constrained quadratic optimization formalized by
\begin{align}\label{ProblemQUAD1}
\mbox{minimize }\;\frac{1}{2}x^\tr Q x + b^\tr x\;  \text{ subject to }\;x\in C:=\bigcup_{i=1}^p C_i,	
\end{align}
where $Q$ is a symmetric matrix (not necessarily positive-semidefinite), $b \in \mathbb{R}^n$, and $C_1,\ldots,C_p\subseteq \R^n$ are nonempty, closed, and convex sets.

When $C=\mathbb{B}_r(0)$ (i.e., $p=1$), this problem is referred as the {\em trust-region subproblem}. If $Q$ is positive-semidefinite, then \eqref{ProblemQUAD1} is a problem of {\em convex quadratic programming}. Even when $Q$ is not positive-semidefinite, Tao and An~\cite{Tao1998} showed that this particular instance of problem \eqref{ProblemQUAD1} could be efficiently addressed with the DCA algorithm by using the following DC decomposition:
\begin{equation}\label{eq:Quad_DCA_g_h}
g(x):=\frac{1}{2}\rho\|x\|^2+b^\tr x+\delta_{\mathbb{B}_r(0)},\quad h(x):=\frac{1}{2}x^\tr(\rho I-Q)x,
\end{equation}
where $\rho\geq\|Q\|_2$. However, this type of decomposition would not be suitable for problem \eqref{ProblemQUAD1} when $C$ is not convex.

As shown in Subsection~\ref{subsec:5.1}, problem \eqref{ProblemQUAD1} for $p\geq 1$ can be reformulated by using  FBE \eqref{FBE} to be tackled with Algorithm~\ref{alg:3} with $\lambda\in(0,\frac{1}{\|Q\|_2})$. Although the decomposition in \eqref{func_h_Example55} may not be suitable for DCA when $Q$ is not positive-definite, it can be regularized by adding $\frac{1}{2}\rho\|x\|^2$ to both $g$ and $h$ with $\rho\geq\max\{0,-2\lambda_{\min}(Q)\}$. Such a regularization would guarantee the convexity of the resulting functions $g$ and $h$ given by
\begin{align}
 g(x)& :=  \frac{1}{2}x^\tr \left(  Q + (\rho+\lambda^{-1}) I\right) x + b^\tr x,  \label{eq:Quad_FBE_g_h}\\
 h(x)& :=  \frac{1}{2}x^\tr  \left(2Q+\rho I\right) x + b^\tr x+ \Asp{\lambda}{\delta_C}((I- \lambda Q)x-\lambda b). \label{eq:Quad_FBE_g_h_bis}
 \end{align}
The function $g$ in \eqref{eq:Quad_DCA_g_h} is not smooth, but the function $g$ in~\eqref{eq:Quad_FBE_g_h} is. Then it is possible to apply BDCA \cite{MR4078808} to formulation \eqref{eq:Quad_FBE_g_h}--\eqref{eq:Quad_FBE_g_h_bis} in order to accelerate the convergence of DCA. Note that it would also be possible to do it with \eqref{eq:Quad_DCA_g_h} if the $\ell_1$ or $\ell_{\infty}$ balls were used; see~\cite{Artacho2019} for more details.\vspace*{0.03in}

Let us describe two numerical experiments to solve problem \eqref{ProblemQUAD1}. \vspace*{-0.05in}

\begin{experiment}\label{exp2}
Consider \eqref{ProblemQUAD1} with $C=\mathbb{B}_r(0)$ and replicate the hardest setting in \cite{Tao1998}, which was originally considered in~\cite{More1983}. Specifically, in this experiment we generated potentially difficult cases by setting $Q:=UDU^\tr$ for some diagonal matrix $D$ and orthogonal matrix $U:=U_1U_2U_3$ with $U_j:=I-2u_ju_j^\tr/\|u_j\|^2$, $j=1,2,3$. The components of $u_j$ were random numbers uniformly distributed in $(-1,1)$, while the elements in the diagonal of $D$ were random numbers in $(-5,5)$. We took $b:=Uz$ for some vector $z$ whose elements were random numbers uniformly distributed in $(-1,1)$ except for the component corresponding to the smallest element of $D$, which was set to $0$. The radius $r$ was randomly chosen in the interval $(\|d\|,2\|d\|)$, where $d_i:=z_i/(D_{ii}-\lambda_{\min}(D))$ if $D_{ii}\neq \lambda_{\min(D)}$ and $0$ otherwise.

For each $n\in\{100,200,\ldots,900,1000,1250,1500,\ldots,3750,4000\}$, we generated 10 random instances, took for each instance a random starting point in $\mathbb{B}_r(0)$, and ran from it the four algorithms described above: DCA applied to formulation~\eqref{eq:Quad_DCA_g_h} (without FBE), DCA and BDCA applied to~\eqref{eq:Quad_FBE_g_h}--\eqref{eq:Quad_FBE_g_h_bis}, and Algorithm~\ref{alg:3}. We took $\lambda=0.8/\|Q\|_2$ as the parameter for FBE (both for DCA and Algorithm~\ref{alg:3}). The regularization parameter $\rho$ was chosen as $\max\{0,-2\lambda_{\min}(Q)\}$ for DCA with FBE and $0.1+\max\{0,-2\lambda_{\min}(Q)\}$ for BDCA, as $h$ should be strongly convex. Both Algorithm~\ref{alg:3} and BDCA were ran with the self-adaptive trial stepsize for the backtracking step introduced in~\cite{MR4078808} with parameters $\sigma=\beta=0.2$ and $\gamma=4$, and with $t_{\min}=10^{-6}$. For the shake of fairness, we did not compute function values for the runs of DCA at each iteration, since it is not required by the algorithm. Instead, we used for both versions of DCA the stopping criterion from~\cite{Tao1998} that $er\leq 10^{-4}$, where
$$er=\left\{\begin{array}{lc}
\left\|x^{k+1}-x^k\right\| /\left\|x^k\right\| & \text { if }\left\|x^k\right\|>1, \\
\left\|x^{k+1}-x^k\right\|& \text { otherwise.}
\end{array}\right.$$
As DCA with FBE was clearly the slowest method, we took the function value of the solution returned by DCA without FBE as the target value for both Algorithm~\ref{alg:3} and BDCA, so these algorithms were stopped when that function value was reached. In Figure~\ref{fig:trust_region_ratio_small}, we plot the time ratio of each algorithm against Algorithm~\ref{alg:3}. On average, Algorithm~\ref{alg:3} was more than 5 times faster than DCA with FBE and more than 2 times faster than DCA without FBE. BDCA greatly accelerated the performance of DCA with FBE, but still Algorithm~\ref{alg:3} was more than 1.5 times faster. Only for size 300, the performance of DCA without FBE was comparable to that of  Algorithm~\ref{alg:3}. We observe on the right plot that the advantage of Algorithm~\ref{alg:3} is maintained for larger sizes.

\begin{figure}[ht!]
\centering
\includegraphics[width=.49\textwidth]{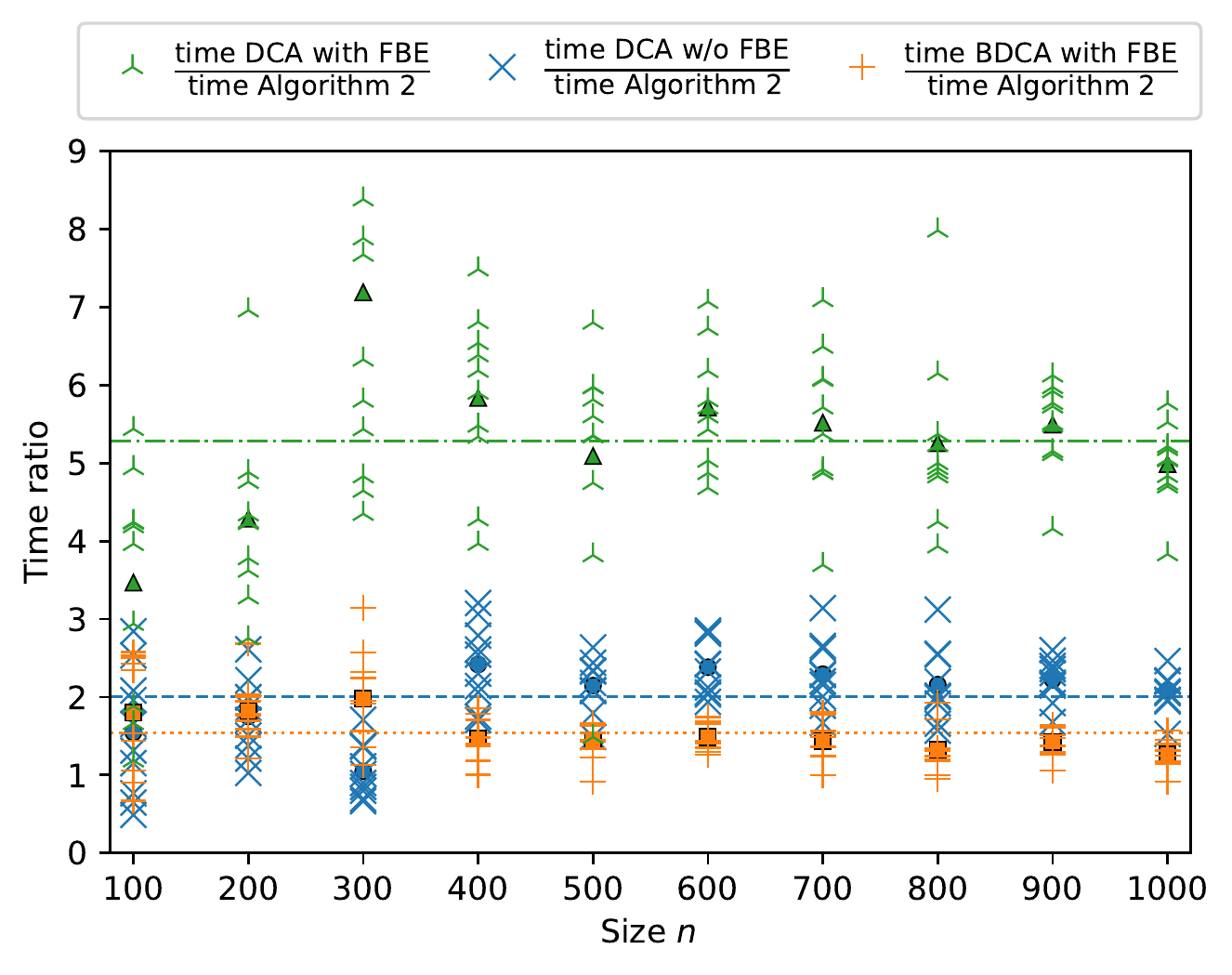} 
\includegraphics[width=.49\textwidth]{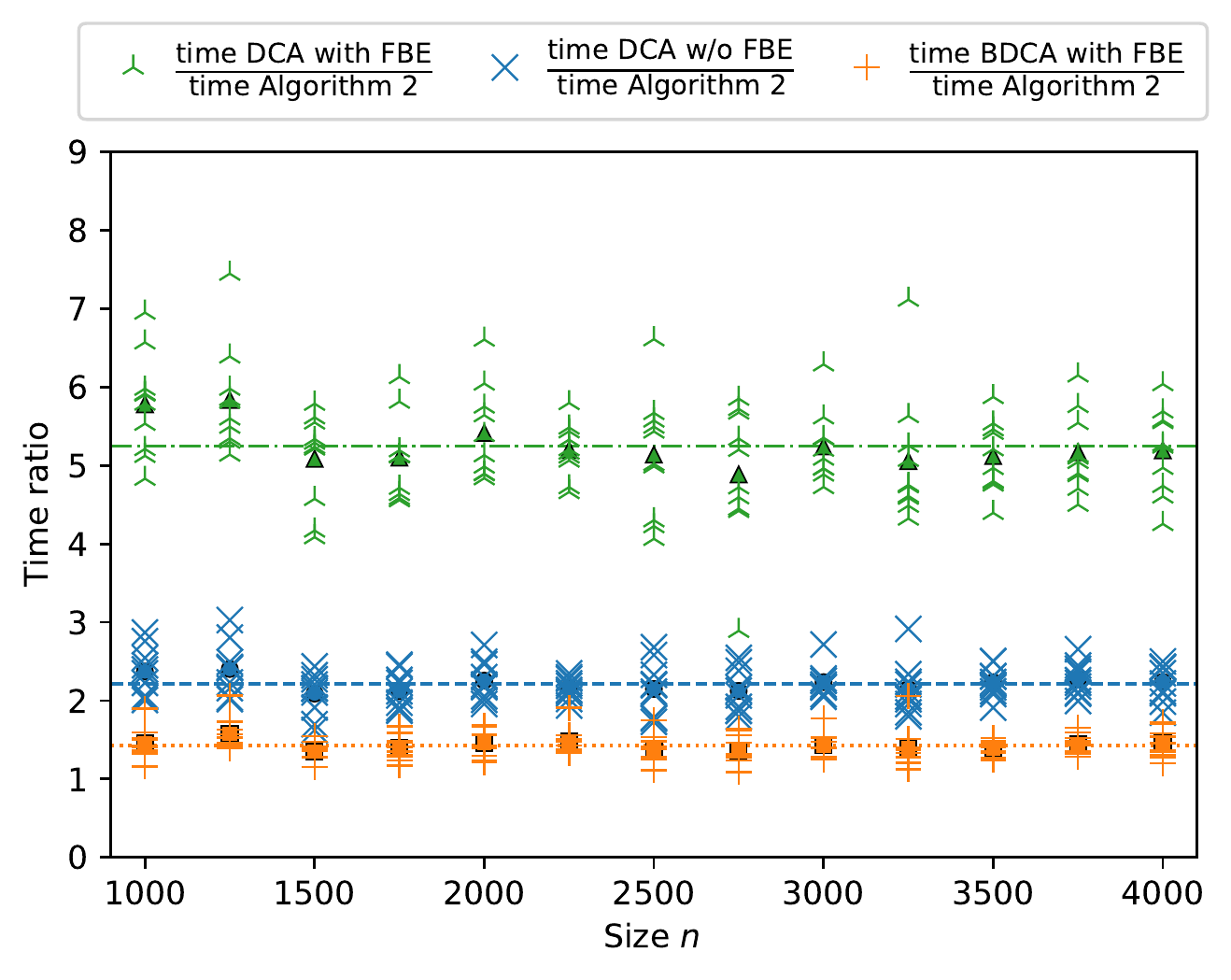}
\caption{Time ratio for 10 random instances of DCA with FBE, DCA without FBE, and BDCA with respect to Algorithm~\ref{alg:3}. Average ratio within each size is represented with a triangle for DCA with FBE, with a square for DCA without FBE and with a circle for BDCA. The overall average ratio for each pair of algorithms is represented by a dotted line.}\label{fig:trust_region_ratio_small}
\end{figure}
\end{experiment}

\begin{experiment} With the aim of finding the minimum of a quadratic function with integer and box constraints, we modified the setting of Experiment~2 and considered instead a set $C$ composed by $9^n$ balls of various radii centered at $\{-4,-3,-2,-1,0,1,2,3,4\}^n$, with $n\in\{2,10,25,50,100,200,500,1000\}$. As balls of radius $1/2\sqrt{n}$ cover the region $[-4,4]^n$, we ran our tests with balls of radii $c/2\sqrt{n}$ with $c\in\{0.1,0.2,\ldots,0.8, 0.9\}$. This time we considered {\em both convex and nonconvex} objective functions. The nonconvex case was generated as in Experiment~2, while for the convex case, the elements of the diagonal of $D$ were chosen as random numbers uniformly distributed in $(0,5)$. 

For each $n$ and~$r$, 100 random instances were generated. For each instance, a starting point was chosen with random coordinates uniformly distributed in $[-5,5]^n$. As the constraint sets are nonconvex, FBE was also needed to run DCA. The results are summarized in Table~\ref{tbl:integer}, where for each $n$ and each radius, we counted the number of instances (out of 100) in which the value of $\varphi_\lambda$ at the rounded output of DCA and BDCA was lower and higher than that of Algorithm~\ref{alg:3} when ran from the same starting point. We used the same parameter settings for the algorithms as in Experiment~2. Finally, we plot in Figure~\ref{fig:integer} two instances in $\mathbb{R}^2$ in which Algorithm~\ref{alg:3} reached a better solution.\vspace*{-0.25in}

\begin{table}[ht!]
\begin{subtable}[t]{\textwidth}\centering
\scalebox{.85}{\centering
\begin{tabular}{c c cc cc cc cc c}
\toprule
\multicolumn{2}{c}{}&\multicolumn{9}{c}{Radius of the balls}\\
\cmidrule[.7pt]{3-11}
& Alg.~\ref{alg:3} vs & $\frac{1}{20}\sqrt{n}$ & $\frac{2}{20}\sqrt{n}$ & $\frac{3}{20}\sqrt{n}$ & $\frac{4}{20}\sqrt{n}$ & $\frac{5}{20}\sqrt{n}$ & $\frac{6}{20}\sqrt{n}$ & $\frac{7}{20}\sqrt{n}$ & $\frac{8}{20}\sqrt{n}$ & $\frac{9}{20}\sqrt{n}$ \\
\midrule[.7pt]
\multirow{ 2}{*}{$n=2$}& DCA & 0/34 & 1/20 & 1/26 & 1/19 & 0/19 & 0/18 & 0/3 & 1/1 & 0/2\\
& BDCA & 6/12 & 2/13 & 3/14 & 4/9 & 0/4 & 1/10 & 0/2 & 1/1 & 0/2\\
\midrule[.5pt]
\multirow{ 2}{*}{$n=10$}& DCA & 2/89 & 2/83 & 1/66 & 5/53 & 8/28 & 3/7 & 1/1 & 3/1 & 1/0\\
& BDCA & 21/68 & 38/53 & 33/39 & 23/24 & 18/16 & 2/7 & 0/0 & 0/1 & 0/0 \\
\midrule[.5pt]
\multirow{ 2}{*}{$n=25$}& DCA & 0/99 & 0/98 & 2/87 & 11/58 & 9/32 & 3/8 & 2/9 & 2/2 & 5/4\\
& BDCA & 16/83 & 29/71 & 40/58 & 37/40 & 13/26 & 2/3 & 0/1 & 0/0 & 1/2\\
\midrule[.5pt]
\multirow{ 2}{*}{$n=50$}& DCA & 0/100 & 0/100 & 0/91 & 2/86 & 13/41 & 14/12 & 9/12 & 6/10 & 12/12\\
& BDCA & 8/92 & 6/94 & 31/69 & 36/53 & 16/28 & 8/8 & 6/5 & 3/4 & 5/3\\
\midrule[.5pt]
\multirow{ 2}{*}{$n=100$}& DCA & 0/100 & 0/100 & 0/99 & 9/87 & 18/49 & 18/31 & 12/22 & 18/20 & 11/21\\
& BDCA & 2/98 & 6/94 & 39/61 & 36/61 & 23/33 & 16/14 & 9/8 & 9/8 & 13/9\\
\midrule[.5pt]
\multirow{ 2}{*}{$n=200$}& DCA & 0/100 & 0/100 & 0/100 & 1/98 & 23/64 & 31/41 & 25/29 & 22/30 & 20/41\\
& BDCA & 3/97 & 2/98 & 38/62 & 37/63 & 33/39 & 27/17 & 18/18 & 14/13 & 16/18\\
\midrule[.5pt]
\multirow{ 2}{*}{$n=500$}& DCA & 0/100 & 0/100 & 0/100 & 1/99 & 6/94 & 15/80 & 27/61 & 29/65 & 36/48\\
& BDCA & 0/100 & 1/99 & 41/59 & 44/56 & 33/63 & 25/56 & 34/39 & 32/47 & 17/35\\
\bottomrule
\end{tabular}}
\caption{Convex case}
\end{subtable}

\begin{subtable}[h]{\textwidth}\centering
\scalebox{.85}{\centering
\begin{tabular}{c c cc cc cc cc c}
\toprule
\multicolumn{2}{c}{}&\multicolumn{9}{c}{Radius of the balls}\\
\cmidrule[.7pt]{3-11}
& Alg.~\ref{alg:3} vs & $\frac{1}{20}\sqrt{n}$ & $\frac{2}{20}\sqrt{n}$ & $\frac{3}{20}\sqrt{n}$ & $\frac{4}{20}\sqrt{n}$ & $\frac{5}{20}\sqrt{n}$ & $\frac{6}{20}\sqrt{n}$ & $\frac{7}{20}\sqrt{n}$ & $\frac{8}{20}\sqrt{n}$ & $\frac{9}{20}\sqrt{n}$ \\
\midrule[.7pt]
\multirow{ 2}{*}{$n=2$}& DCA & 1/8 & 1/9 & 1/10 & 1/6 & 0/6 & 0/6 & 0/8 & 3/2 & 1/0\\
& BDCA & 2/4 & 1/4 & 1/3 & 3/4 & 0/5 & 0/4 & 0/6 & 3/2 & 1/0\\
\midrule[.5pt]
\multirow{ 2}{*}{$n=10$}& DCA & 9/39 & 4/39 & 7/39 & 4/35 & 10/30 & 3/27 & 5/45 & 2/34 & 8/29\\
& BDCA & 9/31 & 11/33 & 13/29 & 6/31 & 11/29 & 6/25 & 7/38 & 5/29 & 10/30\\
\midrule[.5pt]
\multirow{ 2}{*}{$n=25$}& DCA & 6/69 & 13/67 & 7/62 & 5/61 & 10/53 & 3/59 & 6/56 & 3/72 & 3/66\\
& BDCA & 16/58 & 16/63 & 16/55 & 12/48 & 9/52 & 11/52 & 13/52 & 12/57 & 11/58\\
\midrule[.5pt]
\multirow{ 2}{*}{$n=50$}& DCA & 11/81 & 10/79 & 8/87 & 5/90 & 3/87 & 4/80 & 2/86 & 5/89 & 8/81\\
& BDCA & 24/68 & 21/64 & 23/70 & 17/73 & 14/75 & 9/73 & 10/75 & 18/74 & 16/71\\
\midrule[.5pt]
\multirow{ 2}{*}{$n=100$}& DCA & 4/96 & 6/94 & 4/94 & 5/94 & 4/96 & 3/97 & 2/98 & 7/91 & 9/91\\
& BDCA & 15/85 & 16/83 & 18/80 & 14/84 & 17/83 & 11/89 & 9/91 & 20/79 & 19/80\\
\midrule[.5pt]
\multirow{ 2}{*}{$n=200$}& DCA & 4/96 & 4/96 & 4/96 & 2/98 & 1/99 & 2/98 & 4/96 & 3/97 & 0/100\\
& BDCA & 11/89 & 16/84 & 11/89 & 8/92 & 6/94 & 11/89 & 10/90 & 13/87 & 8/92\\
\midrule[.5pt]
\multirow{ 2}{*}{$n=500$}& DCA & 1/99 & 2/98 & 0/100 & 0/100 & 0/100 & 1/99 & 1/99 & 2/98 & 1/99\\
& BDCA & 12/88 & 17/83 & 15/85 & 9/91 & 15/85 & 11/89 & 9/91 & 18/82 & 20/80\\
\bottomrule
\end{tabular}}
\caption{Nonconvex case}
\end{subtable}
\caption{For different values of  $n$ (space dimension) we computed 100 random instances of problem~\eqref{ProblemQUAD1} with $Q$ positive definite and $C$ formed by the union of balls whose centers have integer coordinates between $-4$ and $4$. We counted the number of instances in which DCA and BDCA obtained a lower/upper value than Algorithm~\ref{alg:3}.}\label{tbl:integer}
\end{table}\vspace*{-0.15in}

\begin{figure}[ht!]
\centering
\includegraphics[height=.38\textwidth]{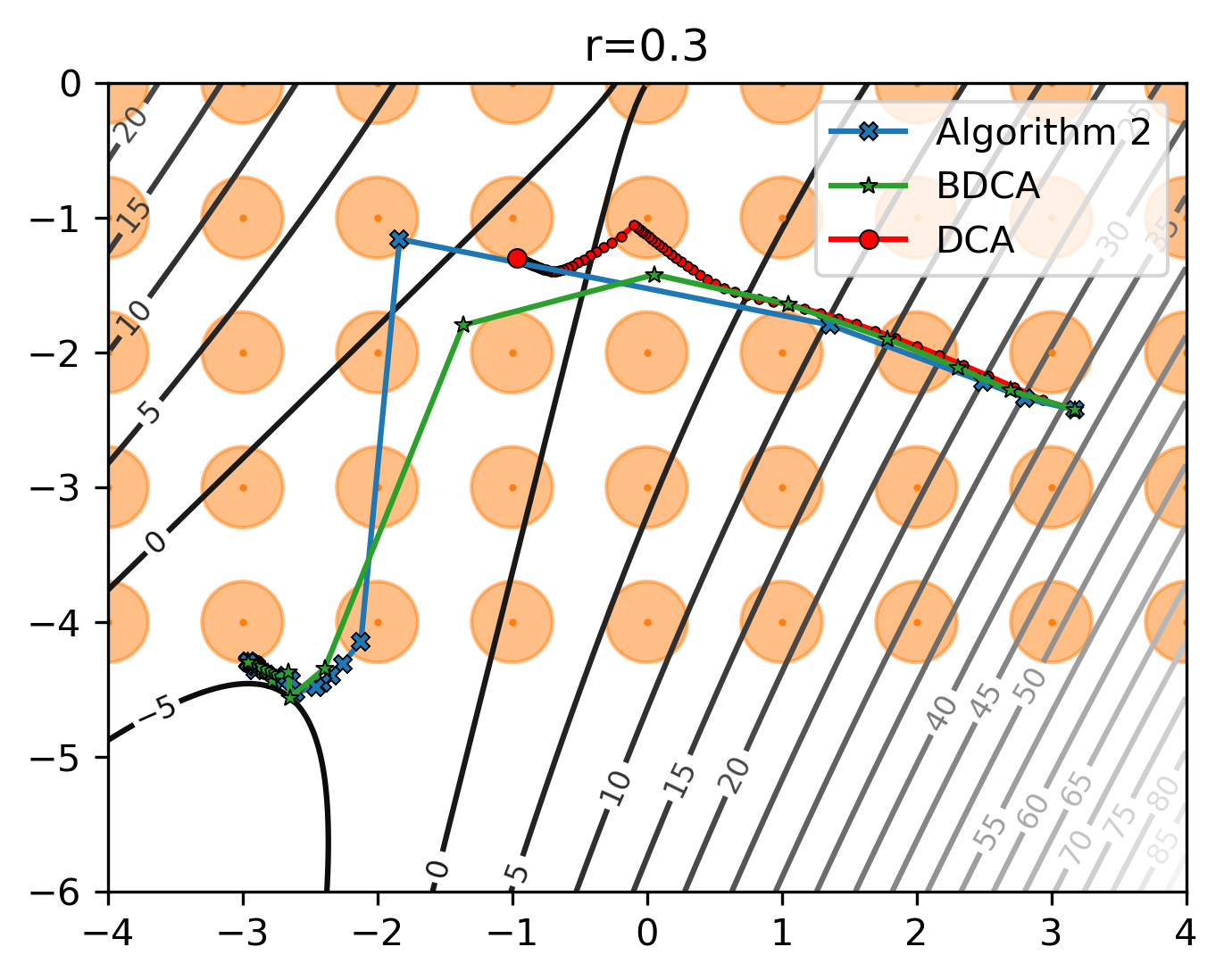}
\includegraphics[height=.38\textwidth]{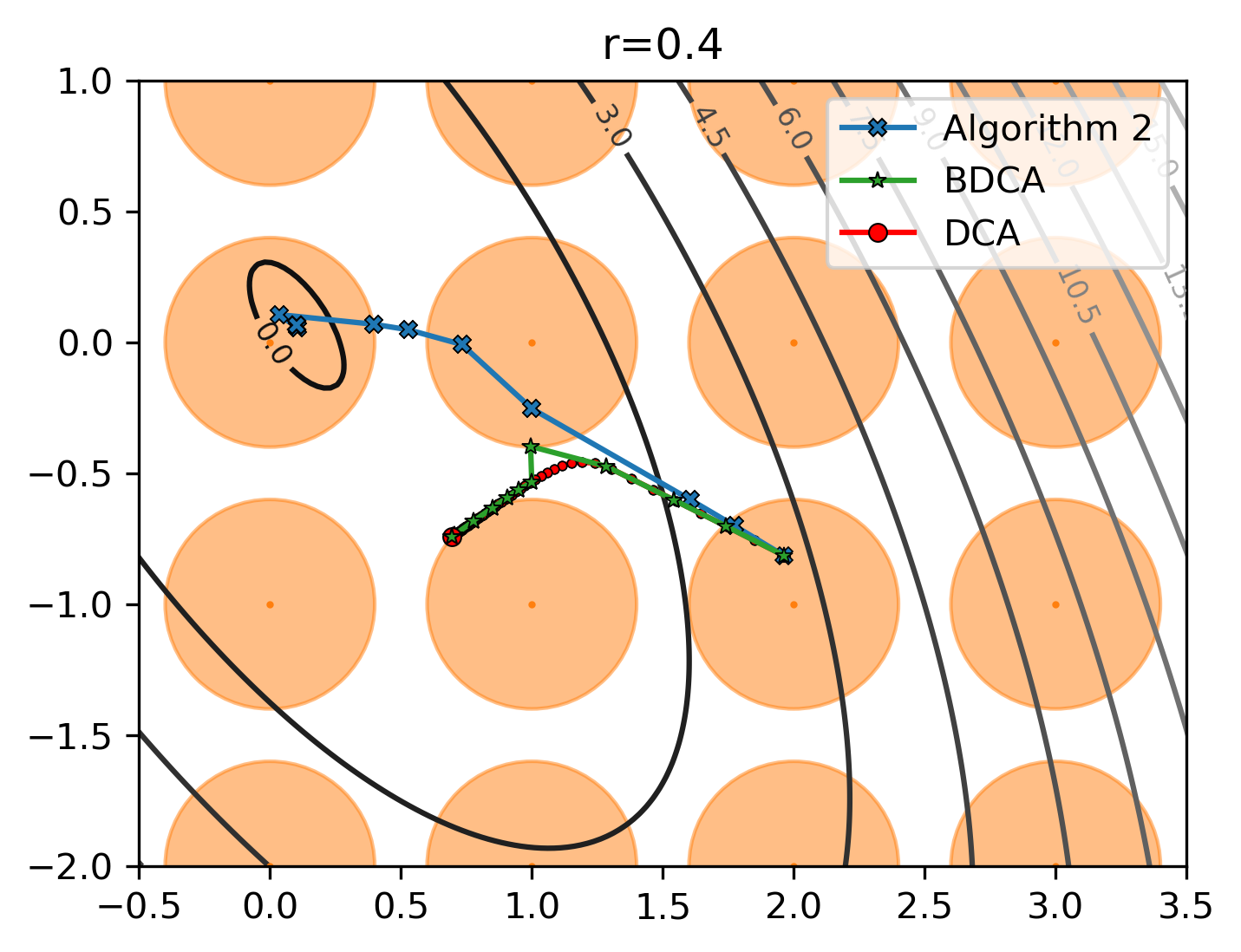}
\caption{Two instances of problem~\eqref{ProblemQUAD1}. On the left, both line searches of Algorithm~\ref{alg:3} and BDCA help to reach a better solution for a nonconvex quadratic function,
while only Algorithm~\ref{alg:3} succeeds on the right for the convex case.}\label{fig:integer} 
\end{figure}
\end{experiment}\vspace*{0.15in}

\section{Conclusion and Future Research}\label{sec:7}\vspace*{-0.05in}

This paper proposes and develops a novel RCSN method to solve problems of difference programming whose objectives are represented as differences of generally nonconvex functions. We establish well-posedness of the proposed algorithm and its global convergence under appropriate assumptions. The obtained results exhibit advantages of our algorithm over known algorithms for DC programming when both functions in the difference representations are convex. We also develop specifications of the main algorithm in the case of structured problems of constrained optimization and conduct numerical experiments to confirm the efficiency of our algorithms in solving practical models. 

In the future research, we plan to relax assumptions on the program data ensuring the linear, superlinear, and quadratic convergence rates for RCSN and also extend the spectrum of applications to particular classes of constrained optimization problems as well as to practical modeling. 
\vspace*{-0.25in}

\end{document}